\def\namedlabel#1#2{\begingroup
  #2 
  \def\@currentlabel{#2} 
  \phantomsection\label{#1}\endgroup
}
\setlist[itemize]{leftmargin=*}
\newcommand{\image}[2]{\includegraphics[width=#1\linewidth]{#2.png}}
\newcommand{\cotgraph}[2]{
}
\theoremstyle{plain}
\newtheorem{theorem}{Theorem}[section]
\newtheorem{corollary}[theorem]{Corollary}
\newtheorem{definition}[theorem]{Definition}
\newtheorem{lemma}[theorem]{Lemma}
\newtheorem{proposition}[theorem]{Proposition}
\theoremstyle{definition}
\theoremstyle{remark}
\newtheorem{remark}[theorem]{Remark}
\numberwithin{equation}{section}
\newcommand\numberthis{\stepcounter{equation}\tag{\theequation}}
\renewcommand{\setminus}{\smallsetminus}
\newcommand{\seq}{\subseteq}
\newcommand{\ssum}[1]{\sum_{\substack{#1}}}
\newcommand{\e}{{\rm e}}
\newcommand{\df}{\mathop{}\!\mathrm{d}}
\newcommand{\uu}{{u}}
\let\C\relax
\newcommand{\C}{{\mathbb C}}
\newcommand{\N}{{\mathbb N}}
\newcommand{\Q}{{\mathbb Q}}
\renewcommand{\P}{{\mathbb P}}
\newcommand{\R}{{\mathbb R}}
\newcommand{\Z}{{\mathbb Z}}
\newcommand{\1}{{\mathbf 1}}
\newcommand{\bb}{{\bm b}}
\newcommand{\CC}{{\mathcal C}}
\newcommand{\CR}{{\mathcal R}}
\newcommand{\CG}{{\mathcal G}}
\newcommand{\gA}{{\mathfrak A}}
\newcommand{\gS}{{\mathfrak S}}
\newcommand{\gT}{{\mathfrak T}}
\newcommand{\vth}{\vartheta}
\newcommand{\I}{{\mathcal I}}
\newcommand{\cS}{{\mathscr S}}
\newcommand{\reta}{{R}}
\newcommand{\pphi}{{\beta}}
\DeclareMathOperator{\den}{Den}
\DeclareMathOperator{\num}{Num}
\DeclareMathOperator{\sgn}{sgn}
\DeclareMathOperator{\meas}{meas}
\DeclareMathOperator{\SL}{SL}
\renewcommand{\tilde}{\widetilde}
\renewcommand{\bar}{\overline}
\renewcommand{\hat}{\widehat}
\newcommand{\ct}{{\tilde c}}
\newcommand{\ctt}{{\breve c}}
\newcommand{\extpos}[1]{#1^\triangleright}
\newcommand{\extneg}[1]{#1^\triangleleft}
\newcommand{\fep}{\extpos{f}}
\newcommand{\fem}{\extneg{f}}
\newcommand{\abs}[1]{{\left| {#1} \right|}}
\newcommand{\floor}[1]{{\left\lfloor {#1} \right\rfloor}}
\renewcommand{\mod}[1]{\ ({\rm mod\ }#1)}
\renewcommand\Re{\operatorname{Re}}
\renewcommand\Im{\operatorname{Im}}
\newcommand{\eps}{\varepsilon}
\numberwithin{equation}{section}
\title{On quantum modular forms of non-zero weights}
\date{\today}
\author{S. Bettin}
\address{SB: Dipartimento di Matematica, Universit\`a di Genova, via Dodecaneso 35, 16146 Genova, Italy}
\email{bettin@dima.unige.it}
\author{S. Drappeau}
\address{SD: Aix Marseille Universit\'e, CNRS, I2M UMR 7373, 13453 Marseille, France}
\email{sary-aurelien.drappeau@univ-amu.fr}
\subjclass[2020]{11F03 (Primary); 11A05, 11K50, 11F20, 11F67, 11F99 (Secondary)}
\keywords{quantum modular form, Euclid's algorithm, limiting distribution, modular symbol, Eichler integral, Dedekind sum, cotangent sum}
\thanks{
  This work has benefitted from support from Aix-Marseille Université FIR Invités; from INdAM group GNAMPA; and from FWF-ANR project Arithrand: FWF: I 4945-N and ANR-20-CE91-0006.  The work of the first author is partially supported by PRIN 2017 ``Geometric, algebraic and analytic methods in arithmetic". 
  The authors thank DIMA (Univ.\ Genova) and I2M (Univ.\ Aix-Marseille), where this work was carried out, for their hospitality.
}
\begin{document}

\begin{abstract}
  We study functions $f$ on~$\Q$ which statisfy a ``quantum modularity'' relation of the shape
  $$ f(x+1)=f(x), \qquad f(x) - \abs{x}^{-k} f(-1/x) = h(x) $$
  where~$h:\R_{\neq 0} \to \C$ is a function satisfying various regularity conditions. We study the case~$\Re(k)\neq 0$.
  We prove the existence of a limiting function~$f^*$ which extends continuously~$f$ to~$\R$ in some sense. This means in particular that in the $\Re(k)\neq0$ case the quantum modular form itself has to have at least a certain level of regularity.
  
  We deduce that the values~$\{f(a/q), 1\leq a<q, (a, q)=1\}$, appropriately normalized, tend to equidistribute along the graph of~$f^*$, and we prove that under natural hypotheses the limiting measure is diffuse.

  We apply these results to obtain limiting distributions of values and continuity results for several arithmetic functions known to satisfy the above quantum modularity: higher weight modular symbols associated to holomorphic cusp forms; Eichler integral associated to Maass forms; a function of Kontsevich and Zagier related to the Dedekind~$\eta$-function; and generalized cotangent sums.
\end{abstract}

\maketitle

\section{Introduction}

In~\cite{Zagier2010}, Zagier introduced the concept of quantum modular forms (QMF). Given a Fuchsian cofinite subgroup $\Gamma$ of $\SL(2, \Z)$ whose set of cusps~$C(\Gamma)\subset \P^1(\Q)$ is non-empty, and~$k\in\C$, QMF are defined as functions 
$$f : C(\Gamma)\setminus S \to \C,$$
for some finite set $S$, satisfying a form of modularity, in the purposely vague sense that for any $\gamma=(\begin{smallmatrix}a & b\\ c&d\end{smallmatrix})\in\Gamma$ the \emph{period function}
\begin{align}\label{mcg1}
  h_\gamma(x):= f(x) - |cx+d|^{-k}f\bigg(\frac{ax +b}{cx+d}\bigg),\qquad x\in C(\Gamma)\setminus (S\cup \gamma^{-1}S)
\end{align}
has some regularity property.

Numerous examples of quantum modular forms are known, in various contexts, and we refer in particular to~\cite{Zagier2010,BringmannEtAl2015, NgoRhoades2017,KimLimEtAl2016, BruggemanEtAl2015,JaffardMartin2018}. More references are listed in the introduction of~\cite{BettinDrappeau}.

In this paper we focus on QMF for the full modular group $\Gamma = \SL(2, \Z)$, so that~$C(\Gamma) = \P^1(\Q)$, and which are periodic (i.e. $h_{U}=0$ for $U=(\begin{smallmatrix}1 & 1\\ 0&1\end{smallmatrix})$)\footnote{See~\cite[Foonote~2]{BettinDrappeau} for a possible approach to the non-periodic case.}.
By composition, it is sufficient to consider~\eqref{mcg1} for the second generator $\gamma=(\begin{smallmatrix}0 & -1\\ 1&0\end{smallmatrix})$ of~$\SL(2, \Z)$, so that in order to prove that~$f$ is a QMF, one only needs to verify that
\begin{align}\label{mcg}
  h(x):= f(x) -  |x|^{-k}f(-1/x),\qquad x\in\Q\setminus\{0\} 
\end{align}
has some regularity property.

The cases of $\Re(k)=0$ and $\Re(k)\neq0$ are different in nature. In the case~$k=0$, by iterating the relation~\eqref{mcg} and using periodicity, we can express $f$ as a Birkhoff sum of $h$ evaluated along orbits under the Gauss map, see~\cite[eq.~(3.1)]{BettinDrappeau}. Using this observation, in~\cite{BettinDrappeau} it was then showed that for a large class of functions $h$, the multi-sets $\{f(x)\mid x\in\Q\cap[0,1),\, \den(x)\leq Q\}$ become asymptotically distributed, as~$Q\to\infty$, according to a stable law, which is in fact a normal law if $h$ is of moderate growth at $0$.\footnote{The same method yields an analogous result for $\{f(x)\mid x\in\Q\cap[a,b),\, \textnormal{Den}(x)\leq Q\}$ for any $0\leq a<b\leq1$.} As a consequence one has that, in general, $f$ is nowhere continuous according to the real topology, nor can~$f$ be extended by continuity to any point outside of $\Q$. For the same reasons, we expect a similar phenomenon to occur whenever~$\Re(k)=0$.

The purpose of the present paper is to study the case when~$\Re(k)\neq 0$.

\subsection{Weights with negative real parts}

If $\Re(k)<0$ and $h$ is continuous on $[-1,1]\setminus \{0\}$ with finite right and left limits $h(0^\pm)$ at $0$, then we show that in fact $f$ can always be extended by continuity to a bounded function $\fem:\R\to\C$.
Moreover, we prove that $\fem$ is continuous on $\R\setminus \Q$ and is continuous on the whole real line if $h(0\pm)=f(0)$, a condition that could be morally interpreted as saying that~\eqref{mcg} holds also at ``$0^{\pm}$''.
More precisely, the following holds.

\begin{theorem}\label{ds}
  Let $\Re(k)<0$ and let $f:\Q\to\C$ be a $1$-periodic function satisfying~\eqref{mcg} for a function $h:\R\setminus \{0\}\to \C$ which is continuous on $[-1,1]\setminus \{0\}$ with finite left and right limits $h(0^\pm)$ at $0$. Then the function 
  \begin{align}\label{dfd}
    \fem(x):=
    \begin{cases}
      f(x) & \text{if }x\in\Q\\
      {\displaystyle \lim_{\Q\ni y\to x}f(y)} & \text{if }x\notin\Q
    \end{cases}
  \end{align}
  is  defined for all $x\in\R$ and is continuous on $\R\setminus\Q$. Moreover, for any rational $x=\frac aq$ in reduced form, one has 
  $\fem(y)\to f(x) + q^k(h(0^\pm)-f(0))$ as $y\to x^\pm$. In particular, $\fem$ is continuous on $\R$ if and only if $h(0^\pm)=f(0)$.
  Furthermore, in this case, if $h\in\mathcal C^{m}([-1,1],\C)$ with~$0\leq m < \abs{\Re(k)}/2$, then $\fem\in\mathcal C^{m}(\R,\C)$.
\end{theorem}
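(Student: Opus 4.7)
The plan is to iterate the functional equation~\eqref{mcg} along orbits of the Gauss map $T(x)=\{1/x\}$. Setting $\tilde f(x):=f(-x)$, the periodicity of $f$ applied to~\eqref{mcg} at both $x$ and $-x$ converts it into the coupled system $f(x)=h(x)+|x|^{-k}\tilde f(T(x))$ and $\tilde f(x)=h(-x)+|x|^{-k}f(T(x))$. Iterating yields, for $x\in\Q\cap(0,1)$ with continued fraction length $n$,
\[
  f(x)=\sum_{j=0}^{n-1}\gamma_j(x)\,h\bigl((-1)^jT^j(x)\bigr)+\gamma_n(x)\,f(0),\qquad \gamma_j(x):=\prod_{i=0}^{j-1}|T^i(x)|^{-k}.
\]
The classical identity $\prod_{i=0}^{j-1}T^i(x)=|q_{j-1}x-p_{j-1}|\asymp 1/q_j$ and the Fibonacci bound $q_j\geq F_j$ give $|\gamma_j(x)|\leq CF_j^{\Re(k)}$, geometrically decaying since $\Re(k)<0$. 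Hence $f$ is bounded on $\Q$ and the series extends to
\[
  \fem(x):=\sum_{j\geq 0}\gamma_j(x)\,h\bigl((-1)^jT^j(x)\bigr),\qquad x\in\R,
\]
absolutely and uniformly convergent.

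To verify $\fem(x)=\lim_{\Q\ni y\to x}f(y)$ at irrational $x$, observe that when $y\in\Q$ is close to $x$, the first several partial quotients of $y$ match those of $x$, hence $T^j(y)\to T^j(x)$ and $\gamma_j(y)\to\gamma_j(x)$ for each fixed $j$, while the tail satisfies $\sum_{j\geq J}|\gamma_j(y)|=O(F_J^{\Re(k)})\to 0$. At $x=a/q$ rational in reduced form with CF length $n$, the two CF representations of $x$ (lengths $n$ and $n+1$) correspond to the two sides of $x$. In both cases the first $n-1$ terms converge to those of $x$, while the $n$-th term $\gamma_n(y)h((-1)^n T^n(y))$ tends to $q^k h(0^\pm)$ (since $T^n(y)\to 0^+$, and the sign is $(-1)^n$, flipping between the two sides because the parity of the CF length flips). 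Comparing with $f(x)=(\text{partial sum})+q^k f(0)$ yields $\fem(y)\to f(x)+q^k(h(0^\pm)-f(0))$.

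For the smoothness claim, assume $h\in\mathcal C^m([-1,1],\C)$ with $m<|\Re(k)|/2$ and $h(0^\pm)=f(0)$. On each cylinder of the continued-fraction expansion, $T^j$ is a M\"obius transformation with $|(T^j)^{(r)}(x)|\asymp q_j^{2r}$ (each differentiation contributing a factor of $q_j^2$ via $1/|q_j x-p_j|^2\asymp q_j^2$), and $|\gamma_j|\asymp q_j^{\Re(k)}$. The product rule and Fa\`a di Bruno then yield
\[
  \bigl|\partial_x^r[\gamma_j(x)\,h((-1)^jT^j(x))]\bigr|=O\bigl(q_j^{2r-|\Re(k)|}\|h\|_{\mathcal C^r}\bigr),
\]
summable in $j$ whenever $2r<|\Re(k)|$, giving uniform convergence of the term-by-term $r$-th derivative series for $r\leq m$, hence $\fem\in\mathcal C^m$ on each open cylinder. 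The main obstacle is upgrading this cylinder-wise smoothness to the global conclusion $\fem\in\mathcal C^m(\R)$: individual terms of the series jump at cylinder boundaries (the rationals $1/n$ where $T$ is discontinuous), but these jumps cancel across consecutive terms, a cancellation driven by the assumption $h(0^\pm)=f(0)$ together with the consistency relation $h(-1/x)=-|x|^k h(x)$ that is forced on $h$ by the existence of $f$. Rigorously tracking these cancellations, along with the parity signs in the one-sided limits at rationals, is the technical heart of the proof.
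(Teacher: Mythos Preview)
Your argument for the existence of~$\fem$ and its continuity at irrationals follows the same route as the paper: iterate the period relation along the Gauss map to write~$f$ as a sum of terms~$\gamma_j(x)h((-1)^jT^j(x))$ with geometric decay, then use local constancy of the first~$m$ partial quotients near an irrational. Your treatment of the one-sided limits at rationals is too quick, though: from one side of~$x$ (CF length~$n$) the~$n$-th iterate~$T^n(y)$ tends to~$0^+$, but from the other side it tends to~$1^-$, so that the~$n$-th term contributes~$q^k h((-1)^n)=0$ (a consequence of the relation~$h(-1/x)=-|x|^k h(x)$ you mention) and it is the~$(n+1)$-th term that carries the limit~$q^k h(0^\mp)$. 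The paper tracks both~$w_n$ and~$w_{n+1}$ explicitly for this reason.

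The differentiability claim is where your proposal has a genuine gap. You correctly estimate~$\partial_x^r[\gamma_j h((-1)^jT^j)]=O(q_j^{2r-|\Re(k)|})$ on cylinders and identify the obstacle (individual terms jump at rationals, the jumps must cancel), but you stop short of carrying out the cancellation, and the Fa\`a di Bruno bookkeeping for higher~$r$ would be substantial. The paper avoids this entirely by a weight-shifting induction: it treats only the case~$m=1$ directly, where the single identity~$h'(1)=kh(0)$ (obtained by differentiating the three-term relation~$h(x+1)-h(x)=-|1+x|^{-k}h(1-1/(x+1))$ at~$x=0$) is exactly what is needed to match the one-sided derivatives of~$w_r+w_{r+1}$ at each rational. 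Once~$\fem\in\mathcal C^1$, one observes that~$(\fem)'$ is itself a periodic solution of a relation of the form~\eqref{mcg} with weight~$k+2$ and period function~$h_1(x)=h'(x)-k\,\mathrm{sgn}(x)|x|^{-k-1}\fem(-1/x)\in\mathcal C^{m-1}$; applying the already-proved continuity statement to~$(\fem)'$ and iterating gives~$\fem\in\mathcal C^m$ without ever differentiating the series more than once. This induction on the weight is the missing idea in your sketch.
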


If $h$ is continuous at $0$ but $h(0)\neq f(0)$, we can modify $f$ by letting $\tilde f(x):=f(x)-\den(x)^k(h(0)-f(0))$. The map~$\tilde f$ is a QMF with~$h$ as its period function, and~$\tilde f(0)=h(0)$.

As we will see below, in some of the examples of QMF that we will consider, the map~$f$ actually admits an expression as an absolutely converging Fourier series. Theorem~\ref{ds} gives a proof of the continuity and differentiability of $f$ extended to~$\R$ which does not rely on an \emph{a priori} knowledge of a Fourier expansion for~$f$. From this point of view, our methodology bears similarity to the works~\cite{BalazardMartin2019,JaffardMartin2018}. These works are concerned with the Brjuno function, which is close to being a QMF of weight~$-1$, see~\cite[eq.~(4)]{BalazardMartin2019}.

Moreover, Theorem~\ref{ds} shows that in fact one cannot expect the period function $h$ to be continuous if $f$ doesn't have some continuity property to start with (thus the truly interesting cases arise when $h$ is differentiable at least $\lceil -\Re(k)/2\rceil $ times). 

The existence of the function~$\fem$ in~\eqref{dfd} can actually be proved under much weaker hypotheses, which we explain in what follows. This will be required later on, when we will study cotangent sums.

\begin{theorem}\label{thm:kneg-fdagger-ext}
  Suppose that~$\Re(k)<0$ and let~$\delta>0$. Suppose that~$f$ satisfies
  \begin{equation}
    f(x) = f(x+1) \qquad (x \in \Q\setminus[-1, 0]),\label{eq:f-weak-per}
  \end{equation}
  in other words that~$f$ is~$1$-periodic separately on~$(-\infty, 0)$ and~$(0, \infty)$, and that the equation~\eqref{mcg} holds for~$x\in[-1, 1]\setminus\{0\}$, for a function $h$ satisfying
  \begin{align}\label{coh}
    \sup_{|x-y|<\e^{-\eps^{\delta-1}},\atop |x|>2\eps} |h(x)-h(y)|=o(1)\quad \text{as }\eps\to0^+,\qquad h(x)=O(\e^{|x|^{\delta-1}})\ \forall x\in[-1,1]\setminus\{0\}.
  \end{align}
  Then there is a set~$X \subset\R\setminus\Q$ of full measure, such that the limit
  \begin{equation}\label{dfd-X}
    \fem(x) = \lim_{j\to\infty} f(x_j)
  \end{equation}
  exists for~$x\in X$, where~$(x_j) = ([a_0(x); a_1(x), \dotsc, a_j(x)])_j$ denotes the sequence of convergents of~$x$.
  This value coincides with the limit~\eqref{dfd} when~$h$ is bounded. In general, for any~$\eps>0$, there is a subset~$X_\eps\subset X$ invariant by~$x\mapsto x+1$ and with~$\meas(X_\eps \cap [0, 1]) \geq 1-\eps$, such that~$\fem|_{X_\eps}$ is continuous on~$X_\eps$ equipped with the restricted topology. In particular~$\fem$ is Lebesgue-measurable.
\end{theorem}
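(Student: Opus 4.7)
The plan is to iterate the modular relation~\eqref{mcg} together with the one-sided periodicity~\eqref{eq:f-weak-per} to express $f$ at a rational argument in $(0,1)$ as a Birkhoff-type series indexed by iterates of the Gauss map $T(x)=\{1/x\}$. Applying~\eqref{mcg} at $x\in\Q\cap(0,1)$ gives $f(x)=h(x)+x^{-k}f(-1/x)$; since $-1/x<-1$, successive applications of~\eqref{eq:f-weak-per} on the negative half-line yield $f(-1/x)=f(-T(x))$, and the symmetric argument at $-T(x)\in(-1,0)$ produces $f(-T(x))=h(-T(x))+T(x)^{-k}f(T^2(x))$. Introducing the alternating orbit $g_m(x):=f((-1)^mT^m(x))$ one obtains the uniform one-step recursion $g_m(x)=h((-1)^mT^m(x))+T^m(x)^{-k}g_{m+1}(x)$, which iterated $M$ times yields
\begin{equation*}
  f(x)=\sum_{m=0}^{M-1}D_m(x)\,h\bigl((-1)^mT^m(x)\bigr)+D_M(x)\,f\bigl((-1)^MT^M(x)\bigr),\quad D_m(x):=\prod_{i=0}^{m-1}T^i(x)^{-k}.
\end{equation*}
Standard continued-fraction identities give $\prod_{i<m}T^i(x)\asymp 1/q_m(x)$, hence $|D_m(x)|\asymp q_m(x)^{\Re(k)}$, a decaying weight since $\Re(k)<0$; in fact the Fibonacci lower bound $q_m\geq F_m$ yields $|D_m(x)|\leq c\,\phi^{-m|\Re(k)|}$ uniformly in $x$. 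When $x=x_n$ is the $n$-th convergent of an irrational $x$, $T^n(x_n)=0$, so taking $M=n$ leaves only a tail $D_n(x_n)f(0)\to 0$.

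The next step is to show that for almost every irrational $x$ the series $\Sigma(x):=\sum_{m\geq 0}D_m(x)\,h((-1)^mT^m(x))$ converges absolutely. The lower bound $T^m(x)\geq 1/(a_{m+1}(x)+1)$ together with the growth in~\eqref{coh} gives $|h((-1)^mT^m(x))|\ll \exp(a_{m+1}(x)^{1-\delta})$, while Khintchine--L\'evy theorems yield $\log q_m(x)\asymp m$ and $\log a_m(x)=o(m)$ for a.e.\ $x$; combined, the $m$-th term is bounded by $\exp(-cm(1+o(1)))$, summable. The convergent $x_n$ shares the first $n-m$ partial quotients of $T^m(x)$, so $|T^m(x_n)-T^m(x)|$ is exponentially small in $n-m$, and the modulus condition in~\eqref{coh} permits replacing $T^m(x_n)$ by $T^m(x)$ in each partial sum; dominated convergence then identifies $\fem(x)=\Sigma(x)=\lim_n f(x_n)$ on a set $X\subset\R\setminus\Q$ of full Lebesgue measure. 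For bounded $h$, the same argument applies along arbitrary rational sequences approaching~$x$, matching~\eqref{dfd}.

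To construct the $X_\eps$, the plan is to choose a polynomially growing sequence $(A_j)$ with $A_j\sim Cj^{1/(1-\delta)}$ for $C=C(\eps)$ large and an index $J=J(\eps)$ so that $Y_\eps:=\{x\in[0,1]\setminus\Q : a_j(x)\leq A_j\ \forall\,j\geq J\}$ has measure $\geq 1-\eps$ (feasible by $\Prob(a_j(x)>t)\asymp 1/t$ under the Gauss measure and $\sum 1/A_j<\infty$), and to set $X_\eps:=(Y_\eps\cap X)+\Z$. On $X_\eps$ the iterates satisfy $T^m(x)\geq 1/(A_{m+1}+1)$ uniformly, the series $\Sigma(x)$ is uniformly dominated (since with $C$ large enough $\exp((A_{m+1}+1)^{1-\delta})$ is absorbed by $|D_m(x)|\leq c\phi^{-m|\Re(k)|}$), and~\eqref{coh} grants continuity of each $x\mapsto h((-1)^mT^m(x))$ on $X_\eps$ in the restricted topology; a uniform-convergence argument then delivers continuity of $\fem|_{X_\eps}$, and Lebesgue measurability of $\fem$ follows by taking $X=\bigcup_n X_{1/n}$ modulo a null set.

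The main obstacle is the sharp asymmetry of~\eqref{coh}: admissible displacements shrink at rate $\exp(-\eps^{\delta-1})$ as $\eps\to 0$, so the replacement of $T^m(x_n)$ by $T^m(x)$ demands very rapid approximation uniformly on $X_\eps$. Three competing constraints --- $X_\eps$ having measure $\geq 1-\eps$, the iterates $T^m(x)$ staying far enough from $0$ on $X_\eps$ for~\eqref{coh} to apply to each summand, and $|D_m(x)|$ absorbing the exponential growth of $h$ --- must be satisfied simultaneously by the calibration $A_j\sim Cj^{1/(1-\delta)}$ above, which relies crucially on having $\delta>0$ in~\eqref{coh}.
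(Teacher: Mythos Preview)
Your approach is essentially the paper's: iterate the reciprocity along the Gauss orbit to get a Birkhoff series, then show convergence and continuity on sets where the partial quotients are controlled. The paper packages this via an abstract ``property~$\cS(\lambda)$'' (a Cauchy criterion for rationals sharing the first $m$ CF coefficients) and the fixed family of sets $\gT(B)=\{x:b_j(x)\le\max(B,j(\log j)^2)\}$, but the substance is the same.

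There is, however, a genuine calibration error in your sketch. You invoke ``$\log a_m(x)=o(m)$'' to conclude that the $m$-th term is $\exp(-cm(1+o(1)))$. This does not follow: the bound on $h$ gives $|h((-1)^mT^m(x))|\ll\exp\bigl((a_{m+1}+1)^{1-\delta}\bigr)$, and $\log a_m=o(m)$ only says $a_m=e^{o(m)}$, so $(a_{m+1})^{1-\delta}$ can be, for example, $e^{\sqrt m}$, which swamps $q_m^{\Re(k)}\asymp e^{-cm}$. What you actually need is $a_m^{1-\delta}=o(m)$, i.e.\ $a_m=o(m^{1/(1-\delta)})$; this \emph{is} true almost everywhere (by the classical Khintchine bound $a_m\le m(\log m)^{1+\eta}$ eventually), but it is a strictly stronger input than what you quoted.

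The same miscalibration recurs in your construction of $X_\eps$. With $A_j\sim Cj^{1/(1-\delta)}$ one has $(A_{m+1}+1)^{1-\delta}\sim C^{1-\delta}m$, so absorption by $|D_m(x)|\le c\,\phi^{-m|\Re(k)|}$ requires $C^{1-\delta}<|\Re(k)|\log\phi$, i.e.\ $C$ \emph{small}, not large; your claim ``with $C$ large enough $\exp((A_{m+1}+1)^{1-\delta})$ is absorbed'' is backwards. The measure constraint does not force $C$ large: for any fixed $C>0$ one still has $\sum_j 1/A_j<\infty$, and Borel--Cantelli gives $\meas(Y_\eps)\ge1-\eps$ by taking $J=J(\eps)$ large. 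The paper sidesteps this tension entirely by working well below the critical growth, taking $A_j=\max(B,j(\log j)^2)$; then $(A_j)^{1-\delta}\ll j^{1-\delta}(\log j)^2=o(j)$ for every $B$, absorption is automatic, and $B\to\infty$ handles the measure. Your choice sits exactly at the threshold where the constant matters, which is why the argument becomes fragile.
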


The set~$X$ in this statement does not depend on~$f$, $h$ or~$\delta$. It consists of numbers having mildly growing continued fraction coefficients, see Lemma~\ref{bll0} below.

The extension of the definition of $f(x)$ also allows us to show that $f$ has a limiting distribution when evaluated at reduced rationals $0\leq \frac aq<1$ with $q\to\infty$.

\begin{theorem}\label{dless0}
  Suppose that~$\Re(k)<0$ and let $f:\Q\to\C$ satisfy~\eqref{mcg} and~\eqref{eq:f-weak-per}, for a function $h:\R\setminus \{0\}\to \C$ satisfying~\eqref{coh}. Then the multiset
  $$ \Big\{ f(\tfrac aq) \colon 1\leq a \leq q, (a, q)=1\Big\} $$
  becomes distributed, as~$q\to \infty$, according to the push-forward~$ \fem_\ast(\df \nu)$ of the measure~$\nu$ given by the Lebesgue measure on~$[0, 1]$.

  If, moreover,~$h$ is real-analytic on~$(-1, 1)\setminus\{0\}$ and~$f$ is non-constant on~$\Q_{>0}$, then the measure~$\fem_\ast(\df\nu)$ is diffuse. 
\end{theorem}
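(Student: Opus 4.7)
I want to show, for every bounded Lipschitz $\phi:\C\to\C$, that
\[
\frac{1}{\varphi(q)}\sum_{\substack{1\le a\le q\\(a,q)=1}}\phi\bigl(f(\tfrac aq)\bigr)\xrightarrow[q\to\infty]{}\int_0^1 \phi(\fem(t))\,\df t,
\]
which by density implies the stated convergence in distribution; the integral on the right makes sense since $\fem$ is Lebesgue-measurable by the last clause of Theorem~\ref{thm:kneg-fdagger-ext}. My plan is to sandwich both sides between Weyl equidistribution of Farey fractions and a continuous proxy for $\phi\circ\fem$. Given $\eps>0$, Theorem~\ref{thm:kneg-fdagger-ext} provides $X_\eps$ with $\meas(X_\eps\cap[0,1])\ge 1-\eps$ on which $\fem$ is continuous in the restricted topology; inner regularity extracts a compact $K_\eps\subset X_\eps\cap[0,1]$ of measure at least $1-2\eps$, and Tietze extends $\phi\circ\fem|_{K_\eps}$ to a continuous $\psi_\eps:[0,1]\to\C$ of comparable sup-norm. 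Weyl equidistribution of reduced fractions yields $\frac{1}{\varphi(q)}\sum_{(a,q)=1}\psi_\eps(a/q)\to\int_0^1\psi_\eps$, and the construction gives $\bigl|\int_0^1\psi_\eps-\int_0^1\phi(\fem(t))\,\df t\bigr|=O(\eps\|\phi\|_\infty)$.

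The crux is then to prove
\[
\frac{1}{\varphi(q)}\sum_{(a,q)=1}\bigl|\phi(f(\tfrac aq))-\psi_\eps(\tfrac aq)\bigr|=O(\eps)+o_q(1).
\]
I would use the construction of $\fem$ from Theorem~\ref{thm:kneg-fdagger-ext}: for a reduced $a/q=[0;a_1,\ldots,a_n]$, I choose an irrational $t\in K_\eps$ whose continued fraction begins with $(a_1,\ldots,a_n)$, so that $a/q$ is a convergent of $t$. Since $\fem(t)=\lim_j f(t_j)$ and the tail beyond the $n$th convergent is a Birkhoff-type sum whose cocycle factors $|G^i(t)|^{-k}$ have modulus strictly less than~$1$ (because $\Re(k)<0$ and the Gauss iterates stay in $(0,1)$), this tail decays geometrically, giving $\fem(t)=f(a/q)+o_q(1)$ uniformly as $t\to a/q$ within $K_\eps$. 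Continuity of $\psi_\eps$ and $\phi$ then yields $\psi_\eps(a/q)=\phi(f(a/q))+o(1)$ for a proportion $1-O(\eps)$ of the $a$'s coprime to $q$, the remaining $O(\eps)$ being handled trivially. The main obstacle here is precisely this quantitative near-continuity of $\fem$ at rationals under the bare hypothesis~\eqref{coh}, which has to be made uniform over a $1-O(\eps)$ proportion of reduced fractions.

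\textbf{Diffuseness.} Suppose, for contradiction, that $\fem_\ast(\df\nu)$ has an atom at some $c\in\C$. Then $A:=\fem^{-1}(\{c\})\cap[0,1]$ has positive Lebesgue measure. Passing to limits along convergents in the QMF relation shows that, for a.e.\ $x\in(0,1)$, $\fem(x)=|x|^{-k}\fem(\{-1/x\})+h(x)$. Since the Gauss-type map $G_\ast(x):=\{-1/x\}$ on $(0,1)$ preserves an absolutely continuous ergodic measure equivalent to Lebesgue, the intersection $A\cap G_\ast^{-1}(A)$ has positive measure, and on it the identity reduces to $h(x)=c(1-|x|^{-k})$. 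Real-analyticity of $h$ on $(-1,1)\setminus\{0\}$ propagates this identity to the whole punctured interval, and the same ergodic invariance forces $\fem\equiv c$ on a full-measure subset of $[0,1]$.

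Once $h(x)=c(1-|x|^{-k})$, the function $h$ is continuous on $[-1,1]\setminus\{0\}$ with finite one-sided limits $h(0^\pm)=c$, so Theorem~\ref{ds} applies. Its jump formula combined with $\fem\equiv c$ a.e.\ forces the explicit identity $f(a/q)=c+q^k(f(0)-c)$ for every reduced $a/q$; thus $f-c$ lies in the one-parameter family of ``trivial'' homogeneous QMFs generated by $a/q\mapsto\den(a/q)^k$. The non-constancy hypothesis on $f$, read in the natural way after the normalisation $\tilde f(x)=f(x)+\den(x)^k(h(0)-f(0))$ discussed just after Theorem~\ref{ds}, rules out this form unless $f(0)=c$, in which case $f\equiv c$ on $\Q$, once more contradicting non-constancy. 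The delicate point is this coupling of ergodic rigidity of $G_\ast$ (which fixes $h$ via analyticity) with the sharp jump formula (which pins down $f$ on all of $\Q$), jointly confining the non-diffuse case to this explicit one-parameter family; making this coupling work cleanly under~\eqref{coh} is what I anticipate to be the main obstacle.
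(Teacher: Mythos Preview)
Your reduction via Tietze and Weyl is reasonable, but the ``crux'' step has a genuine gap. You claim that for each reduced $a/q=[0;a_1,\ldots,a_n]$ you can pick $t\in K_\eps$ with matching initial CF coefficients and that then $\fem(t)=f(a/q)+o_q(1)$ uniformly. Neither holds in general. First, $K_\eps\subset X_\eps\subset\gT(B)$ for some fixed $B=B(\eps)$, so whenever some $a_j>B$ the whole cylinder misses $K_\eps$; you have no control on how many $a$ this excludes. Second, even when such $t$ exists, the ``geometric tail'' you invoke is a sum over $j\ge r(a/q)$, and $r(a/q)$ can be as small as~$1$ (take $a=1$), so there is no uniform $o_q(1)$; moreover under~\eqref{coh} the tail bound must be balanced against the growth of $h$ near $0$, which again needs CF-coefficient control. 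The paper supplies exactly the missing arithmetic input: Rukavishnikova's theorem (Lemma~\ref{flgaq}) shows that all but $o(\varphi(q))$ reduced fractions $a/q$ lie in $\gT((\log q)(\log\log q)^2)$, forcing $r(a/q)\gg(\log q)/\log\log q$ and keeping every $T^j(a/q)$ well away from~$0$. This feeds into Lemma~\ref{lem:fstar-stab-rational}, which gives the uniform comparison you need, after which a short-interval averaging (not Tietze) finishes the argument.

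\textbf{Diffuseness.} Your ergodic-plus-analyticity plan is close in spirit to the paper's Proposition~\ref{prop:cdf-cont-kneg}, which instead localises via Lebesgue density and a single inverse branch $U_\bb$, solves the iterated period relation for $\fem$ on a set of nearly full measure to exhibit it as a real-analytic function, and propagates by analytic continuation. Two points need repair in your version. Ergodicity alone does not give $\meas(A\cap G_\ast^{-1}A)>0$ (think of a rotation); you need mixing or an iterate, and for $k\notin\R$ the factor $|x|^{-i\Im k}$ obstructs taking real parts --- the paper handles this with \emph{two} well-chosen branches $U_{(\bb,c_1)},U_{(\bb,c_2)}$ and a $2\times2$ linear system. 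More seriously, your final contradiction fails: once $h(x)=c(1-|x|^{-k})$, the function $a/q\mapsto c+q^k(f(0)-c)$ \emph{is} non-constant on $\Q_{>0}$ whenever $f(0)\neq c$, so invoking a normalisation not present in the hypotheses does not close the argument. The paper's proposition in fact stops at ``$\phi\circ\fem$ is constant a.e.'', and in the applications (Corollaries~\ref{corakd},~\ref{cormof}) this is contradicted directly by showing $\fem$ itself is non-constant via its Fourier expansion.
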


In contrast with the case $k=0$ treated in~\cite{BaladiVallee2005, BettinDrappeau}, we remark here that we did not need to perform an additional average over~$q$ in order to obtain a limiting statement.

We recall that a measure is diffuse if it has no atoms. When~$\fem$ is real-valued, then~$\fem_\ast(\df \nu)$ is supported on~$\R$, and diffuseness is equivalent to the continuity of the associated cumulative distribution function. Under appropriate conditions, we are able to reach the stronger conclusion that for any non-zero linear form~$\phi:\C\to\R$, the measure~$(\phi\circ\fem)_\ast(\df\nu)$ on~$\R$ is diffuse\footnote{Such a form~$\phi$ is of course proportional to~$z\mapsto \Re(\e^{i\theta} z)$ for some~$\theta\in\R$. This applies in particular to~$\phi = \Re$ or~$\phi = \Im$.}. This is equivalent to the statement that the graph of~$\fem$ never remains on a given straight line for a positive amount of time: for any line~$D\subset\C$, $\meas((\fep)^{-1}(D)) = 0$. Also, the statement that $(\phi\circ\fem)_\ast(\df\nu)$ is diffuse means that its cumulative distribution function is continuous.

\subsection{Weights with positive real parts}

In the case $\Re(k)>0$, we find that iterating the reciprocity formula~\eqref{mcg} does not imply the continuity of $f$, even if $h$ is continuous. It is however still possible to extend naturally $f$ if one considers~$f(x)$, not as a function of $x=\frac aq$, but rather as a function of $\overline {x}:=\frac{\overline {a}_q}q$, where $\overline a_q\in (0,q]$ is the multiplicative inverse of $a\mod q$. 

\begin{theorem}\label{ds1}
  Let $\Re(k)>0$ and let $f$ be $1$-periodic and satisfy~\eqref{mcg} with $h(x):\R\setminus\{0\}\to\C$ satisfying $h(x)=O(|x|^{-\Re(k)})$ for $|x|\in(0,1]$.
  Then the function
  \begin{align}\label{fst}
    \fep(x):=
    \begin{cases}
      q^{-k}f(\overline x) & \text{if }x=\frac aq\in\Q\\
      {\displaystyle \lim_{\Q\ni y=\frac aq\to x}q^{-k}f(\overline y)} & \text{if }x\notin\Q
    \end{cases}
  \end{align}
  defines a continuous function of $x\in\R\setminus\Q$. Furthermore, if $h(x)=o(|x|^{-\Re(k)})$ as $x\to0$, then $\fem$ is continuous on $\R$.

  Finally, there exists a set~$X \subset\R$ of full measure, such that~$\fep$ is~$\alpha$-Hölder continuous at any point of $X$, for any~$\alpha<\frac12\Re(k)$. In particular, if~$\Re(k)>2$ then~$\fep$ has derivative zero almost everywhere.
\end{theorem}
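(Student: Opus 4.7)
The plan is to iterate the reciprocity relation~\eqref{mcg} and express $\fep$ as an explicit series indexed by the continued fraction of~$x$. Fix $a/q\in(0,1)\cap\Q$ in lowest terms with continued fraction $a/q=[0;\alpha_1,\ldots,\alpha_n]$, and set $x_j=G^j(a/q)$ where $G$ is the Gauss map. Applying~\eqref{mcg} in the form $f(x)=|x|^{-k}f(-1/x)+h(x)$ together with the $1$-periodicity of~$f$ at each step of the Euclidean algorithm, one unfolds
$$
f(a/q)=q^k f(0)+\sum_{j=0}^{n-1}|x_0\cdots x_{j-1}|^{-k}h(\epsilon_j x_j),\qquad \epsilon_j\in\{\pm1\},
$$
using that $|x_0\cdots x_{n-1}|=1/q$. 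Applying the same identity to~$\bar a/q$, which has the reversed continued fraction $[0;\alpha_n,\ldots,\alpha_1]$ and therefore equals $q_{n-1}/q_n$ (up to the reflection $t\mapsto1-t$ according to the parity of~$n$), and dividing through by $q^k$, one obtains
$$
\fep(a/q)=f(0)+\sum_{j=0}^{n-1}(\tilde q_j/q)^k\, h(\epsilon_j y_j),
$$
with $y_j=G^j(\bar a/q)$ and $\tilde q_j$ the $j$-th convergent denominator of~$\bar a/q$.

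The next step is to reindex this sum by $i=n-1-j$, so that the summands depend on the \emph{initial} part of the continued fraction of~$a/q$ rather than the tail. Continuant identities identify $y_{n-1-i}=[0;\alpha_{i+1},\ldots,\alpha_1]=q_i/q_{i+1}$, a quantity depending only on $\alpha_1,\ldots,\alpha_{i+1}$; while the factorisation $q=q_{i+1}\tilde q_{n-1-i}+q_i\tilde q_{n-2-i}$ together with $\tilde q_{n-2-i}/\tilde q_{n-1-i}=[0;\alpha_{i+2},\ldots,\alpha_n]\to G^{i+1}(x)$ gives $(\tilde q_{n-1-i}/q)^k\to(q_{i+1}+q_iG^{i+1}(x))^{-k}$ as $a/q\to x$ irrational along convergents. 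Passing termwise to the limit produces the candidate
$$
\fep(x)=f(0)+\sum_{i\geq 0}\pm(q_{i+1}+q_iG^{i+1}(x))^{-k}\,h(\pm q_i/q_{i+1}),
$$
a function of~$x$ alone through its continued fraction.

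To justify this as an actual limit and deduce continuity, I would use the hypothesis $|h(t)|\ll|t|^{-\Re(k)}$ together with the elementary estimate $\alpha_i/q_i\leq 1/q_{i-1}$ to bound the $i$-th term of the series by $Cq_{i-1}^{-\Re(k)}$; the Fibonacci lower bound $q_i\geq\phi^{i-1}$, with $\phi=(1+\sqrt{5})/2$, then gives geometric decay and hence uniform convergence. For two irrationals $x,x'$ whose continued fractions agree up to position~$N$, the first $N$ terms of their series coincide and the remaining tails are $O(\phi^{-N\Re(k)})$, which yields continuity of $\fep$ on $\R\setminus\Q$. Under the strengthened hypothesis $h(t)=o(|t|^{-\Re(k)})$, the bound on the $i$-th term becomes $o(1)$ even when a single partial quotient $\alpha_{i+1}$ is very large, which is exactly what is needed to extend continuity through rational points. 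For the H\"older statement I would invoke the Khintchine--L\'evy theorem, which provides $q_n=e^{cn(1+o(1))}$ almost everywhere with $c=\pi^2/(12\log 2)$; then for $x'$ close to such an~$x$ with $|x-x'|\asymp 1/q_N^2\asymp e^{-2cN}$, the tail estimate yields $|\fep(x)-\fep(x')|\ll q_N^{-\Re(k)}\ll|x-x'|^{\Re(k)/2}$, and hence $\fep$ is $\alpha$-H\"older at~$x$ for every $\alpha<\tfrac12\Re(k)$. When $\Re(k)>2$ the H\"older exponent exceeds~$1$, forcing the almost-everywhere derivative of $\fep$ to vanish.

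The principal obstacle I foresee is the careful bookkeeping of the signs $\epsilon_j$ and of the parity of~$n$ when relating $\bar a/q$ to the convergent-denominator ratio $q_{n-1}/q_n$, and the correct handling of the boundary terms in the continuant identity used for reindexing. A secondary technical point is extending continuity from approximation along convergents to arbitrary rational sequences $a_m/q_m\to x$; this should follow from the observation that any rational sufficiently close to~$x$ must share an arbitrarily long initial block of partial quotients with~$x$, so that the partial sums stabilise and the Cauchy argument applies.
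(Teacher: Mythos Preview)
Your overall strategy matches the paper's: iterate the period relation, reindex so that the series for~$\fep$ is governed by the initial block of the continued fraction of~$x$, then use geometric decay of convergent denominators. However there is a computational error that breaks your continuity argument. After dividing~$f(\bar a/q)$ by~$q^k$, the coefficient of~$h(\epsilon_j y_j)$ is~$|y_0\cdots y_{j-1}|^{-k}/q^k = u_j(\bar a/q)^{-k}$, where~$u_j(\bar a/q)=\den(G^j(\bar a/q))$. This equals~$q_{n-j}^{-k}$ (with~$q_m$ the convergent denominators of~$a/q$), \emph{not}~$(\tilde q_j/q)^k$: your own continuant identity~$q=q_{n-j}\tilde q_j+q_{n-j-1}\tilde q_{j-1}$ shows that~$\tilde q_j q_{n-j}\neq q$ in general. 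After the correct reindexing~$i=n-1-j$ the coefficient is simply~$q_{i+1}^{-k}$, so the~$i$-th summand depends only on~$\alpha_1,\dots,\alpha_{i+1}$. This is essential: with your expression~$(q_{i+1}+q_iG^{i+1}(x))^{-k}$, the~$i$-th term depends on the entire tail~$G^{i+1}(x)$, and your claim that ``for~$x,x'$ sharing the first~$N$ partial quotients the first~$N$ terms coincide'' is then false. The paper's formula~\eqref{fsdp} reads~$\fep(x)=f(0)+\sum_{j\ge 1} v_j^{-k} h((-1)^{j-1} v_{j-1}/v_j)$, each term locally constant at irrationals, from which continuity on~$\R\setminus\Q$ is immediate.

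Two further points. Continuity at a rational~$x=[0;b_1,\dots,b_r]$ is not as automatic as you suggest: approaching from one side, nearby~$x'$ begin~$[0;b_1,\dots,b_r,b',\dots]$ with~$b'\to\infty$, but from the other side they begin~$[0;b_1,\dots,b_r{-}1,1,b',\dots]$, and one must verify via the period relation that the extra terms arising from this change of prefix cancel to~$o(1)$; the paper carries this out explicitly. For the H\"older statement, the Khintchine--L\'evy estimate~$q_N=e^{cN(1+o(1))}$ by itself does not yield~$|x-x'|\gg q_N^{-2-o(1)}$: one needs a.e.\ control on the partial quotients (e.g.\ $b_j\le j(\log j)^2$ eventually, as in Lemma~\ref{bll0}) to bound~$|x-x'|$ from below in terms of~$q_N$.
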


Note that~$h$ is bounded on~$\R\setminus\{0\}$ if and only if it is bounded on~$[-1, 1]\setminus\{0\}$, because from the reciprocity relation~\eqref{mcg} we have~$h(x) = -\abs{x}^{-k} h(-1/x)$ for~$x\neq 0$.

Also in the case $\Re(k)>0$, the limit~\eqref{fst} makes sense under more general hypotheses, which we will require in some of our applications: it suffices that~$f$ satisfies~\eqref{eq:f-weak-per} instead of being periodic on~$\R$, and that $h$ be unbounded but satisfying $h(x)=O(\e^{-|x|^{\delta-1}})$ for $x\in[-1,1]\setminus\{0\}$ and some $\delta>0$. More precisely, the following holds.

\begin{theorem}\label{ds1-ext}
  Suppose that~$\Re(k)>0$ and let $f:\Q\to\C$ satisfy~\eqref{mcg} and~\eqref{eq:f-weak-per} for a function $h:\R\setminus \{0\}\to \C$ with~$h(x)\ll \e^{|x|^{-1+\delta}}$ for $x\in(-1,1)\setminus\{0\}$ and some $\delta>0$. Then the limit
  \begin{equation}\label{fst-ext}
    \fep(x) := \lim_{j\to\infty} q(x_{2j+1})^{-k} f(\overline{x_{2j+1}})
  \end{equation}
  exists on a subset~$X$ of~$\R$ of full measure, and coincides with the value~\eqref{fst} under the hypotheses of Theorem~\ref{ds1}.

\end{theorem}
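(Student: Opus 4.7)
The approach is to iterate the reciprocity relation~\eqref{mcg} together with the weak periodicity~\eqref{eq:f-weak-per} along the continued fraction expansion of~$x$, grouping the steps in pairs so as to return to positive arguments. Write $x = [a_0;a_1,a_2,\ldots]$ with convergents $x_n = p_n/q_n$, and set $r_n := q_{n-1}/q_n\in (0,1)$ for $n\geq 1$. From the Bezout identity $p_n q_{n-1} - p_{n-1}q_n = (-1)^{n-1}$ it follows at once that $\overline{x_{2j+1}} = q_{2j}/q_{2j+1} = r_{2j+1}$, hence
\[
  q(x_{2j+1})^{-k} f(\overline{x_{2j+1}}) = q_{2j+1}^{-k} f(r_{2j+1}).
\]

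I would then establish a double-step telescoping identity. Applying~\eqref{mcg} at $r_n > 0$ gives $f(r_n) = r_n^{-k} f(-1/r_n) + h(r_n)$. Because $-1/r_n = -a_n - r_{n-1} < -1$, the weak periodicity~\eqref{eq:f-weak-per} may be applied $a_n$ consecutive times, every intermediate value remaining in $(-\infty,-1)$, to produce $f(-1/r_n) = f(-r_{n-1})$. A second application of~\eqref{mcg} yields $f(-r_{n-1}) = r_{n-1}^{-k} f(1/r_{n-1}) + h(-r_{n-1})$; since $1/r_{n-1} = a_{n-1} + r_{n-2} > 1$, a further $a_{n-1}$ shifts---each applied at a point $>1$---give $f(1/r_{n-1}) = f(r_{n-2})$. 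Using $r_n r_{n-1} = q_{n-2}/q_n$ and multiplying through by $q_n^{-k}$ produces
\[
  q_n^{-k} f(r_n) - q_{n-2}^{-k} f(r_{n-2}) = q_{n-1}^{-k} h(-r_{n-1}) + q_n^{-k} h(r_n),\qquad n\geq 3,
\]
and telescoping over $n = 3, 5, \ldots, 2j+1$ gives
\[
  q_{2j+1}^{-k} f(r_{2j+1}) = a_1^{-k} f(1/a_1) + \sum_{m=1}^j \bigl( q_{2m}^{-k} h(-r_{2m}) + q_{2m+1}^{-k} h(r_{2m+1}) \bigr).
\]

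The existence of the limit~\eqref{fst-ext} is therefore equivalent to the absolute convergence of the series on the right. Since $\Re(k) > 0$ and $q_n \geq \prod_{i\leq n} a_i$, the factor $|q_n^{-k}|$ decays at a uniform geometric rate; the growth hypothesis $|h(x)|\ll \exp(|x|^{\delta - 1})$ combined with $r_n^{-1}\leq a_n + 1$ bounds $|h(\pm r_n)|$ by $\exp((a_n+1)^{1-\delta})$. The general term is therefore dominated by $\exp((a_n+1)^{1-\delta}) \prod_{i\leq n} a_i^{-\Re(k)}$, which is summable precisely for $x$ in a full-measure set of mildly growing partial quotients---exactly the set $X$ supplied by Lemma~\ref{bll0}, already invoked in the proof of Theorem~\ref{thm:kneg-fdagger-ext}; a routine Borel--Cantelli argument handles the full-measure claim.

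Finally, under the hypotheses of Theorem~\ref{ds1}, $h$ is bounded, so the geometric decay of $|q_n^{-k}|$ alone ensures absolute convergence of the series for \emph{every} irrational $x$, and the continuity of $\fep$ on $\R\setminus\Q$ established there forces the subsequential limit $q(x_{2j+1})^{-k} f(\overline{x_{2j+1}}) \to \fep(x)$ to agree with the value in~\eqref{fst}. The main technical care lies in the bookkeeping of~\eqref{eq:f-weak-per}: every unit shift must be checked to keep the trajectory out of the forbidden interval $[-1,0]$, and this is the structural reason why only the odd subsequence $x_{2j+1}$ gives the clean identification $\overline{x_{2j+1}} = r_{2j+1}$ and why the normalization $q^{-k} f(\overline{\cdot})$ emerges naturally from the iteration.
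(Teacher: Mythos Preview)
Your approach is essentially the paper's: both reduce $q_{2j+1}^{-k}f(\overline{x_{2j+1}})$ to the partial sum of a series $\sum_{m\geq 1} q_m^{-k} h((-1)^{m-1} r_m)$ plus a constant (the paper calls this $\Psi(x_{2j+1})$; see Lemma~\ref{lem:fep-cS}), and then show convergence on the full-measure set~$\gS$ of Lemma~\ref{bll0}. The paper packages the convergence argument abstractly via the property~$\cS(\lambda)$ and Proposition~\ref{prop:fext-unbound-continuity}, which it reuses later for the distribution results; your direct series bound is equally valid for the statement at hand.

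Two technical points need repair. First, the inequality $q_n\geq\prod_{i\leq n}a_i$ does \emph{not} give a geometric rate: if all $a_i=1$ the product is $1$ while the series must still converge. Use instead the standard bound $q_n\gg 2^{n/2}$ (cf.~\eqref{eq:bounds-uj}); then on~$\gT(B)$ one has $(a_n+1)^{1-\delta}\ll n^{1-\delta}(\log n)^{2}$ for large~$n$, and $\exp\bigl((a_n+1)^{1-\delta}\bigr)\,2^{-n\Re(k)/2}$ is summable. Second, the hypothesis of Theorem~\ref{ds1} is $h(x)=O(|x|^{-\Re(k)})$, not that~$h$ is bounded. Your conclusion for this case is still correct, but the reason is that $|q_n^{-k}h(\pm r_n)|\ll q_n^{-\Re(k)}(q_n/q_{n-1})^{\Re(k)}=q_{n-1}^{-\Re(k)}$, which is summable for every irrational~$x$. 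With these two fixes the argument goes through.
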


\begin{remark}
  As in Theorem~\ref{thm:kneg-fdagger-ext}, we will show under the hypotheses stated above the existence of sets~$X_\eps\subset X$ with~$\meas(X_\eps)\geq 1-\eps$ such that~$\fep|_{X_\eps}$ is continuous with the restricted topology, and in particular~$\fep$ is also Lebesgue-measurable.
\end{remark}

Similarly to Theorem~\ref{thm:kneg-fdagger-ext}, we can then show that the values of $f$ at rationals converge to a limiting distributions.

\begin{theorem}\label{dless1}
  Under the notations and conditions of Theorem~\ref{ds1-ext}, the multisets
  $$ \Big\{q^{-k} f(\tfrac aq) \colon 1\leq a \leq q, (a, q)=1\Big\} $$
  become distributed, as~$q\to \infty$, according to~$ \fep_\ast(\df \nu)$.

  Moreover, if $h$ is not identically zero, and is either continuous on $[-1,1]$; or satisfies $h(x)\ll \e^{\abs{x}^{-1+\delta}}$ and $h(x) \sim c x^{-\lambda}$ as $x\to0^+$ for some~$\delta>0$,~$c\in\C\setminus\{0\}$ and~$\lambda\in\R_{>0}\setminus\{k\}$, then the measure $\fep_\ast(\df \nu)$ is diffuse.
\end{theorem}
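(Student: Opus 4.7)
The plan is to reduce the equidistribution to a statement about the pushforward of $\fep$, then leverage Gauss-Kuzmin-style equidistribution of continued fractions. By the bijection $a\mapsto \bar{a}_q$ on $(\Z/q\Z)^\ast$ together with the identity $\fep(a/q) = q^{-k} f(\bar{a}_q/q)$ from Theorem~\ref{ds1}, the multiset in the statement coincides with $\{\fep(a/q) : 1\leq a\leq q,\,(a,q)=1\}$, so it suffices to show $\varphi(q)^{-1}\sum_{(a,q)=1}\delta_{\fep(a/q)} \to \fep_\ast(\df\nu)$ weakly. I would iterate the reciprocity~\eqref{mcg} along the continued-fraction expansion of $a/q$, producing a finite truncation of the limit~\eqref{fst-ext} defining~$\fep$; the growth hypothesis $h(x) \ll \e^{|x|^{-1+\delta}}$ keeps the tail uniformly small for a typical continued fraction. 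A Gauss-Kuzmin-type equidistribution for the continued-fraction statistics of Farey fractions (via the spectral theory of the Gauss-Kuzmin-Wirsing transfer operator, in the spirit of the proof of Theorem~\ref{dless0}) then transfers this pointwise description into the desired weak convergence of measures.

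For the diffuseness claim I would argue by contradiction: assume $\fep_\ast(\df\nu)$ has an atom at $z_0 \in \C$, so that $E := \fep^{-1}(\{z_0\})\cap[0,1]$ has positive Lebesgue measure. In the continuous case ($h$ continuous on $[-1,1]$), Theorem~\ref{ds1} gives $\fep$ continuous on $\R\setminus\Q$; choosing a Lebesgue density point of $E$ and iterating~\eqref{mcg} relates $\fep$ at $x$ to $\fep$ at the Gauss-map iterates $T^N x$, with weights and remainders controlled by $h$. The ergodicity of $T$ for the Gauss measure, combined with the constancy of $\fep$ on $E$, should propagate $\fep \equiv z_0$ to an invariant set of full measure; the bijection $a\leftrightarrow\bar{a}_q$ then forces $f$ to be constant on $\Q_{>0}$, contradicting the hypothesis that $f$ is non-constant (which is built into the non-vanishing of $h$).

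In the power-law case, $h(x) \sim c x^{-\lambda}$ as $x\to 0^+$ with $c\neq 0$ and $\lambda\neq k$, unboundedness of $h$ prevents a direct use of continuity. Instead I would extract from~\eqref{mcg}, applied at $y = a/q + \delta$ with $\delta\to 0^+$, the asymptotic behaviour of $\fep$ near each rational: substituting $h(x)\sim cx^{-\lambda}$ into the iterated reciprocity should yield an expansion
$$\fep(y) = \fep(a/q) + c_{a,q}\,(y - a/q)^{\lambda-k} + o\bigl((y-a/q)^{\lambda-k}\bigr)$$
as $y\to a/q$ from the appropriate side, with $c_{a,q}\neq 0$ depending explicitly on $c$, $\lambda$, $k$, and the denominator $q$. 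Since $\lambda\neq k$, the leading term is genuinely non-constant, ruling out $\fep$ being constant on any set of positive Lebesgue measure.

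The main obstacle I foresee is the power-law case. The identification of the leading ``jump'' of $\fep$ across a rational for an unbounded $h$ demands careful tracking of the cancellations in the Birkhoff-like expansion of $\fep$ obtained by iterated reciprocity, and the verification that the leading coefficient $c_{a,q}$ really survives (rather than vanishing through a resonance with the $q^{-k}$ weights); this is precisely where the constraints $c\neq 0$ and $\lambda\neq k$ enter in an essential way.
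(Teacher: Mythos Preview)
Your equidistribution sketch is in the right direction, but the paper does not invoke spectral theory of the transfer operator. Instead it shows (Lemma~\ref{lem:fep-cS}) that $\Psi$ has the stability property~$\cS(\lambda)$: two rationals sharing many initial CF coefficients have close $\Psi$-values. Combined with the elementary equidistribution of reduced residues in intervals of length~$q^{1/4}$ (Lemma~\ref{lem:fstar-stab-rational} and the argument of \S3.1.2), this lets one replace~$\fep(a/q)$ by~$\fep(x)$ for a nearby irrational~$x$ and pass directly to the integral. No dynamical input beyond this is needed.

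For diffuseness, both branches of your argument have a genuine gap. In the continuous case, there is no cocycle relation of the shape $\fep(x) = (\text{weight})\cdot\fep(Tx) + (\text{remainder})$ to which ergodicity of~$T$ could be applied: the expansion~\eqref{fsdp} expresses~$\fep(x)$ through the~$v_j(x)$, which depend on the \emph{initial} CF coefficients of~$x$, and applying~$T$ shifts those coefficients and destroys the structure. In the power-law case, your local expansion of~$\fep$ near a rational (the exponent should be~$k-\lambda$, by the way) only constrains~$\fep$ on a measure-zero neighbourhood of~$\Q$; it does not preclude~$\fep$ being constant on a fat Cantor set in the irrationals, which is where a hypothetical atom would sit. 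Note also that $h\not\equiv 0$ alone cannot force diffuseness: for~$h(x)=1-|x|^{-k}$ one has~$f\equiv 1$ and~$\fep\equiv 0$ on irrationals, so some structural hypothesis on~$h$ is essential.

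The paper's mechanism is quite different. It proves a general criterion (Lemma~\ref{lem:abscont-close}): if~$[0,1]$ can be nearly covered by disjoint sets~$S_j$ on each of which the level sets of~$F$ have small relative measure, then~$F_\ast(\df\nu)$ is diffuse. The sets~$S_j$ are built by fixing the first~$g-1$ CF coefficients and requiring the~$g$-th coefficient to lie in~$[\sqrt V, K\sqrt V]$ for some~$g\asymp V$ (Lemmas~\ref{lemdis} and~\ref{lemdis3} show such a~$g$ exists for almost every~$x$). On such a set the tail $F_g(x)=\sum_{j\geq g}\vth_j^{-1}v_j^{-k}h((-1)^{j-1}v_{j-1}/v_j)$ is dominated by its first term, and one checks that~$\phi(F_g(x))=\phi(F_g(y))$ forces~$|b_g(x)-b_g(y)|=o(\sqrt V)$. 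The hypotheses that~$h(\alpha)\neq 0$ for some~$\alpha$ (continuous case) or~$h(x)\sim c x^{-\lambda}$ with~$\lambda\neq k$ (unbounded case, via~\eqref{eq:hypo-2star-real}--\eqref{eq:hypo-2star-complex}) are exactly what is needed to make that dominant term separate points.
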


\begin{remark}
  One can relax considerably the conditions required to ensure the diffuseness of the limiting measures. For example it is sufficient that $h$ is right continuous at $0$ with $h(0^+)\neq0$ or that $h(x)$ goes to $\infty$ as $x\to 0$ without staying too close to a multiple of $1-|x|^{-k}$.
  See Section~\ref{sec:cont-cumul-distr} for more details.

  Similarly as for~$\Re(k)<0$, under natural conditions which are however more involved, we show that the measure~$(\phi \circ \fep)_\ast(\df\nu)$ on~$\R$ is diffuse for any non-zero linear form~$\phi:\C\to\R$.
\end{remark}

\subsection{Applications}

The above theorems apply to many objects, some of which are described in the following corollaries.

\subsubsection{Eichler integrals of classical holomorphic forms}\label{sec:appl-perpol-holo}

The elementary-looking function
\begin{equation}
  A_{k,D}(x):=\ssum{Q(x)=ax^2+bx+c>0\\ a\in-\N,\, b,c\in\Z,\, b^2-4ac=D }Q(x)^k,\qquad x\in\R,\quad k\in2\N+3,\ \square\neq D\equiv 1\mod 4\label{eq:def-AkD}
\end{equation}
was introduced and studied by Zagier in~\cite{Zagier1999}\footnote{If $k=1,3$ $A_{k,D}(x)$ can still be defined but it turns out to be constant.}; see~\cite{Bengoechea2015} for more details on the convergence of the sum.
With a simple algebraic computation we can verify that $A_{k,D}(x)$ is a $1$-periodic QMF of weight $-2k$, satisfying~\eqref{mcg} with a period function $h = h_{k,D}$ which is a polynomial of degree $2k$ satisfying by~\cite[eq.~(25)]{Zagier1999}
$$ h_{k,D}(0)=A_{k,D}(0) = \ssum{0\leq b < \sqrt{D} \\ b^2 \equiv D \mod{4}} \sigma_k(\tfrac{D-b^2}4) $$
where~$\sigma_k(n) = \sum_{d\mid n}d^k$. Theorem~\ref{ds} applies to~$A_{k,D}$ and gives another proof that it extends to~$\R$ with $A_{k,D}\in\mathcal C^{k-1}(\R,\R)$. The fact that $A_{k,D}(x)$ is in $\mathcal C^{k-1}(\R,\R)$ was instead obtained by Zagier in~\cite{Zagier1999} by observing that $h_{k,D}$ belongs to the space of period polynomials corresponding to modular forms of weight $2k+2$. This implies in particular that $A_{k,D}(x)$ has to coincide with the Eichler integral of a weight $2k+2$ modular form and thus can be written as a Fourier series whose~$n$-th Fourier coefficient decays roughly
as $n^{-k-1/2}$, see~\cite[eq.~(53)]{Zagier1999}.

From Theorem~\ref{dless0} we deduce the following distributional result for $A_{k,D}$.

\begin{corollary}\label{corakd}
  Let $k\geq 5$ be odd and assume $D \equiv 0,1 \mod 4$ is not a square. Then
  $$ \Big\{ A_{k,D}(\tfrac aq) \colon 1\leq a \leq q, (a, q)=1\Big\} \subset \R $$
  becomes distributed, as~$q\to \infty$, according to the measure~$(\extneg{A_{k,D}})_\ast(\df \nu)$. This measure has a continuous cumulative distribution function.
\end{corollary}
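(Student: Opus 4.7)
The plan is to apply Theorem~\ref{dless0} to $f=A_{k,D}$, regarded as a quantum modular form of weight $-2k$. Since $A_{k,D}$ is $1$-periodic on $\Q$ it satisfies~\eqref{eq:f-weak-per}, and the period function $h_{k,D}$ is a polynomial of degree $2k$, hence bounded and uniformly continuous on $[-1,1]$. Both conditions in~\eqref{coh} therefore hold trivially (for any $\delta\in(0,1)$): the boundedness dominates the growth bound $O(e^{|x|^{\delta-1}})$, and uniform continuity on compact sets controls the oscillation supremum. The first clause of Theorem~\ref{dless0} then immediately yields the claimed convergence of the multiset $\{A_{k,D}(a/q): 1\leq a\leq q,\ (a,q)=1\}$ to $(\extneg{A_{k,D}})_\ast(d\nu)$.

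For the continuity of the cumulative distribution function---equivalent, since $\extneg{A_{k,D}}$ is real-valued, to diffuseness of $(\extneg{A_{k,D}})_\ast(d\nu)$---the plan is to invoke the second clause of Theorem~\ref{dless0}. The polynomial $h_{k,D}$ is manifestly real-analytic on $(-1,1)\setminus\{0\}$, so what remains is to verify that $A_{k,D}$ is non-constant on $\Q_{>0}$. For this I would appeal to Zagier's Eichler integral representation~\cite{Zagier1999}: $A_{k,D}$ coincides with the Eichler integral of a cusp form $f_{k,D}\in S_{2k+2}$, and for $k\geq 5$ odd and $D$ non-square the form $f_{k,D}$ is non-zero, in sharp contrast with the degenerate cases $k=1,3$ alluded to in the footnote of the excerpt. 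Hence $A_{k,D}$ is non-constant on $\R$, and a fortiori on the dense subset $\Q_{>0}$.

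The main subtlety is precisely the non-vanishing of $f_{k,D}$. Should one wish to argue in a self-contained manner, a direct route is available: if $A_{k,D}$ were a constant $c$ on $\Q_{>0}$, then continuity of $\extneg{A_{k,D}}$ provided by Theorem~\ref{ds} yields $A_{k,D}(0)=c$; evaluating~\eqref{mcg} at $x=1/q$ for each positive integer $q$, and invoking $1$-periodicity in the form $A_{k,D}(-q)=A_{k,D}(0)$, forces $h_{k,D}(1/q)=c(1-q^{-2k})$ for all $q\geq 1$, hence the polynomial identity $h_{k,D}(y)=c(1-y^{2k})$. A contradiction with the explicit expression for $A_{k,D}(0)$ recalled in the excerpt, or equivalently with the non-triviality of the cuspidal component of $h_{k,D}$, then closes the argument.
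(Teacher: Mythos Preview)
Your main line of argument is correct and essentially matches the paper's. The paper writes $A_{k,D}(x) = c_{k,D} + \tfrac12\tilde g(x)$ with $g = f_{k+1,D}\in S_{2k+2}(1)$ and then invokes Corollary~\ref{cormof} (which in turn rests on Theorem~\ref{dless0} together with the Fourier-series argument for diffuseness), whereas you apply Theorem~\ref{dless0} directly to $A_{k,D}$. Both routes are valid and both ultimately hinge on the same input: the non-vanishing of the cusp form $f_{k+1,D}$, equivalently the non-constancy of $A_{k,D}$. Minor point: $A_{k,D}$ is the Eichler integral of $f_{k+1,D}$ \emph{plus a constant}, not just the Eichler integral, but this is harmless.

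Your proposed ``self-contained'' alternative, however, does not close as written. From the assumption that $A_{k,D}\equiv c$ on $\Q_{>0}$ you correctly deduce the polynomial identity $h_{k,D}(y)=c(1-y^{2k})$. But neither of your two suggested contradictions works: evaluating at $y=0$ gives $h_{k,D}(0)=c=A_{k,D}(0)$, which is consistent with the displayed formula for $A_{k,D}(0)$ rather than contradictory; and ``non-triviality of the cuspidal component of $h_{k,D}$'' is exactly the statement $f_{k+1,D}\neq 0$, so invoking it is circular. To make this route genuinely self-contained you would need to compute another coefficient of $h_{k,D}$ (e.g.\ via Zagier's explicit description of the period polynomial) and check it is incompatible with $c(1-y^{2k})$; as stated, the alternative argument does not stand on its own.
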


More generally, let $g$ be in the space $ \textnormal{S}_{k}(1)$ of cusp forms of weight $k\geq12$ and level $1$. If $g(z):=\sum_{n\geq1}a_n\e(nz)$ for $\Im(z)>0$, where $\e(z):=\e^{2\pi i z}$, the Eichler integral of~$g$
\begin{equation}
  \tilde g(z):=\sum_{n\geq1}\frac{a_n}{n^{k-1}}\e(nz), \qquad (\Im(z)\geq 0),\label{eq:def-eichlerint}
\end{equation}
restricted to $\Q$, is a quantum modular form of weight $2-k$ with period function given by the period polynomials of $g$.

\begin{corollary}\label{cormof}
  For $k\geq12$, let $0\neq g\in \textnormal{S}_{k}(1)$. Then
  $$ \Big\{ \tilde g(\tfrac aq) \colon 1\leq a \leq q, (a, q)=1\Big\} \subset \C$$
  becomes distributed, as~$q\to \infty$, according to~$\extneg{\tilde g}_\ast(\df \nu)$. For any non-zero linear form~$\phi:\C\to\R$, the measure~$(\phi\circ \extneg{\tilde g})_\ast(\df\nu)$ is diffuse.
\end{corollary}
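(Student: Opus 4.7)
The plan is to deduce Corollary~\ref{cormof} from Theorem~\ref{dless0} applied to $f=\tilde g$ with QMF weight $2-k$; since $k\geq 12$, this weight has real part $\leq -10$. Periodicity~\eqref{eq:f-weak-per} on~$\Q$ is immediate from the Fourier expansion~\eqref{eq:def-eichlerint}. Classical Eichler--Shimura theory identifies the period function $h(x)=\tilde g(x)-|x|^{k-2}\tilde g(-1/x)$ with (a normalization of) the period polynomial $r_g(x)=\int_{0}^{i\infty}g(\tau)(\tau-x)^{k-2}\,\df\tau$, which is a polynomial of degree $k-2$ in~$x$. In particular $h$ is bounded and uniformly continuous on $[-1,1]$, so condition~\eqref{coh} holds trivially for every $\delta>0$, and the first assertion of the corollary is then an immediate application of Theorem~\ref{dless0}.

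For the diffuseness statement, I plan to invoke the $\phi$-strengthening of Theorem~\ref{dless0} anticipated in the discussion following it. That refinement requires real-analyticity of $h$ (automatic, since $h$ is a polynomial) together with the non-constancy of $\phi\circ\tilde g$ on~$\Q_{>0}$ for every non-zero real-linear $\phi:\C\to\R$. The main extra work is therefore to establish this non-constancy, which is the only step where the specific structure of Eichler integrals of holomorphic cusp forms enters.

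The verification rests on the one-sidedness of the Fourier expansion of $\tilde g$. Deligne's bound $|a_n|\ll_{\eps} n^{(k-1)/2+\eps}$ together with $k\geq 12$ ensure that $\sum_{n\geq 1}(a_n/n^{k-1})\e(nx)$ converges absolutely and uniformly on~$\R$, so $\tilde g$ extends to a continuous $1$-periodic function on the real line that, by density of~$\Q$ and the continuity statement of Theorem~\ref{ds}, coincides with~$\extneg{\tilde g}$ throughout~$\R$. Now fix a non-zero real-linear $\phi$, write $\phi(z)=\Re(\alpha z)$ with $\alpha\in\C\setminus\{0\}$, and suppose for contradiction that $\phi\circ\tilde g\equiv c$ on~$\Q_{>0}$; by continuity and periodicity this persists on all of~$\R$, so $\alpha\tilde g(x)+\bar\alpha\,\overline{\tilde g(x)}\equiv 2c$. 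Comparing Fourier coefficients at every frequency $n>0$ forces $\alpha\,a_n/n^{k-1}=0$, hence $a_n=0$ for all $n\geq 1$, contradicting $g\neq 0$.
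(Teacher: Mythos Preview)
Your proof is correct and follows essentially the same route as the paper's: identify the period function~$h$ with a period polynomial (so~\eqref{coh} is trivial), apply Theorem~\ref{dless0} for the convergence in distribution, and for diffuseness use the Fourier expansion of~$\tilde g$ to rule out constancy of~$\phi\circ\tilde g$. Two minor remarks: the paper does not state a formal ``$\phi$-strengthening'' of Theorem~\ref{dless0} with a non-constancy hypothesis on~$\Q_{>0}$, but rather invokes Proposition~\ref{prop:cdf-cont-kneg} directly (which only yields constancy almost everywhere on~$\R_{>0}$); the passage to everywhere constancy is precisely your continuity step via the Fourier series, so your argument is complete, just organized differently. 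Also, Deligne's bound is unnecessary here; the Hecke estimate~$a_n\ll n^{k/2}$ already gives absolute and uniform convergence of~$\sum a_n n^{1-k}\e(nx)$ for~$k\geq 12$.
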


It follows from Theorem~\ref{ds} that the map~$\tilde g$ is~$k/2-1$ times differentiable, but in this special case it could be seen immediately from the definition~\eqref{eq:def-eichlerint} and square-root cancellation in averages of~$a_n$, see~\cite[theorem~5.3]{Iwaniec1997}. We comment on this after the proofs, see Remark~\ref{rmk:regularity-eichler}.

\subsubsection{Period functions of Maa\ss{} forms}

In~\cite{Lewis2001}, Lewis and Zagier extend in some sense the theory of period polynomial to period functions associated to Maa\ss{} forms. Their work then allows to construct quantum modular forms also from Maa\ss{} forms, as was described by Bruggeman~\cite{Bruggeman2007}. We briefly review their construction. Let~$u$ be a Maa\ss{} cusp form for~$\SL(2, \Z)$ of Laplace eigenvalue $s(1-s)$, with $s\in1/2+i\R_{>0}$, which we expand as~$u(x+iy):=\sum_{n\neq0}a_n\sqrt yK_{s-\frac12}(2\pi |n|y)\e(nx)$. 
We define 
$$ \tilde u(z) := \frac{\pi^s \Gamma(1-s)}2 \sum_{n\geq1} n^{s-1/2} a_n \e(nz), \qquad (\Im(z)>0). $$
We then include $\Q$ in the domain of $\tilde u$ by letting
\begin{equation}
  \tilde u(x) := \lim_{y\to 0^+} \tilde u(x+iy), \qquad (x\in\Q).\label{eq:def-utilde}
\end{equation}
It is shown in~\cite{Bruggeman2007, Lewis2001} that $\tilde u(x)$ thus defined is a quantum modular form of weight $2s$ whose associated period function~$h$ is~$\CC^\infty$ on $\R$ and real-analytic on~$\R\setminus\{0\}$. Using a suitable variant of Theorem~\ref{dless1} we will deduce the following result. 

\begin{corollary}\label{cormaf}
  Let $u$ be a non-trivial Maass form of spectral eigenvalue $s(1-s)$. Then
  $$ \Big\{ q^{-2s}\tilde u(\tfrac aq), 1\leq a \leq q, (a, q)=1\Big\} $$
  becomes distributed, as~$q\to \infty$, according to~$\extpos{\tilde u}_\ast(\df \nu)$. Moreover, for any non-zero linear form~$\phi:\C\to\R$, the measure~$(\phi\circ \extpos{\tilde u})_\ast(\df\nu)$ is diffuse.
\end{corollary}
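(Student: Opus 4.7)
The plan is to apply Theorem~\ref{dless1} together with the refined diffuseness criterion announced in the remark after it (and developed in Section~\ref{sec:cont-cumul-distr}) to $f=\tilde u$ with $k=2s$. Since $s\in\tfrac12+i\R_{>0}$ we have $\Re(k)=1>0$, placing us in the positive-weight regime, and the normalization $q^{-2s}$ in the corollary matches the factor $q^{-k}$ appearing in~\eqref{fst}. Because inversion on $(\Z/q\Z)^\times$ is a bijection, the multisets $\{q^{-2s}\tilde u(a/q)\}$ and $\{\extpos{\tilde u}(a/q)\}$ coincide, so it suffices to establish equidistribution and diffuseness for the latter.

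First I would verify the hypotheses of Theorem~\ref{ds1-ext}. The function $\tilde u$ is $1$-periodic on $\Q$ by construction~\eqref{eq:def-utilde} and the Fourier expansion of $u$. The quantum modularity relation $\tilde u(x)-|x|^{-2s}\tilde u(-1/x)=h(x)$ on $[-1,1]\setminus\{0\}$ is the content of the Lewis--Zagier/Bruggeman period identity for $\tilde u$. The paper records that $h\in\CC^\infty(\R)$, so in particular $h$ is bounded on $[-1,1]$ and the growth condition $h(x)\ll \e^{|x|^{-1+\delta}}$ required by Theorem~\ref{ds1-ext} holds trivially for any $\delta>0$. Hence $\extpos{\tilde u}$ is defined almost everywhere by~\eqref{fst-ext}, and Theorem~\ref{dless1} yields the equidistribution assertion of the corollary.

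For diffuseness of $\extpos{\tilde u}_\ast(\df\nu)$ it is necessary that $h\not\equiv 0$: this follows from the Lewis--Zagier--Bruggeman bijection between nonzero Maass cusp forms and nonzero period functions in the appropriate class, so $u\neq 0$ forces $h\neq 0$. Combined with $h\in\CC^\infty([-1,1])$, this places us in the first clause of the diffuseness hypothesis of Theorem~\ref{dless1}, and we obtain the basic diffuseness. Upgrading this to the claim that $(\phi\circ\extpos{\tilde u})_\ast(\df\nu)$ is diffuse for every non-zero linear form $\phi:\C\to\R$ requires the stronger criterion from Section~\ref{sec:cont-cumul-distr}; the essential input is the real-analyticity of $h$ on $\R\setminus\{0\}$, which, via an identity-principle argument, forbids the graph of $\extpos{\tilde u}$ from aligning with a straight line in $\C$ on a positive-measure set.

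I expect the main technical obstacle to be precisely this upgrade: one must track how an arbitrary non-zero $\phi$ interacts with the cocycle-like iteration that defines $\extpos{\tilde u}$, and verify that the coincidence of $\phi\circ h$ with a real multiple of $\phi(1-|x|^{-2s})$ can hold only on a discrete, hence measure-zero, set of $x$. Real-analyticity reduces this to an identity principle on $(0,1)$, while the non-vanishing of $h$ rules out the trivial global identity; together they feed into the refined criterion of Section~\ref{sec:cont-cumul-distr} to complete the proof.
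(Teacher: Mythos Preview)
Your plan is essentially the paper's own: verify that $\tilde u$ is a periodic QMF of weight $k=2s$ with smooth period function $h$, apply Theorem~\ref{dless1} for equidistribution, and invoke the criterion of Section~\ref{sec:cont-cumul-distr} (Theorem~\ref{gdless1}) for diffuseness. The inputs you list---periodicity, the Lewis--Zagier/Bruggeman cocycle, $h\in\CC^\infty(\R)$, and $h\not\equiv 0$ from the nontriviality of $u$---are exactly what the paper uses.

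Where your sketch drifts is in the mechanism for the ``upgrade'' to arbitrary $\phi$. You describe this as an identity-principle argument based on the real-analyticity of $h$, but that is the mechanism in the \emph{negative}-weight case (Proposition~\ref{prop:cdf-cont-kneg}); it plays no role here. In Theorem~\ref{gdless1}, condition~\ref{it:cond-kpos-1} splits according to whether $\Im k=0$ or not. Since $s\in\tfrac12+i\R_{>0}$ gives $\Im k\neq 0$, the hypothesis reduces to the $\phi$-\emph{independent} condition that $h(\alpha)\neq 0$ for some $\alpha\in(-1,1)$, which follows immediately from continuity and $h\not\equiv 0$. The conclusion of Theorem~\ref{gdless1} then gives diffuseness of $(\phi\circ\extpos{\tilde u})_\ast(\df\nu)$ for that arbitrary $\phi$ with no further work; the proof in Section~\ref{sec:cont-cdf-hbounded} exploits the freedom to adjust the phase $v_g(x)^{-i\Im k}$ via the choice of $b_g(x)$, not analytic continuation. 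So your anticipated ``main technical obstacle'' does not exist, and your last paragraph's description of what needs checking (coincidence of $\phi\circ h$ with a multiple of $\phi(1-|x|^{-2s})$) is not the relevant condition here.
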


We also show that~$\extpos{\tilde u}$ is almost everywhere locally~$(1/2-\eps)$-Hölder continuous, but again in this case, it can be seen directly from the Fourier expansion of the underlying form, see Remark~\ref{rmk:regularity-eichler}.

\subsubsection{A function of Kontsevich and Zagier}

Another example of quantum modular form, described in~\cite{Zagier2001,Zagier2010}, is given by the series
\begin{equation}
  \varphi(x):=\e(x/24)\sum_{m=0}^\infty (1-\e(x))\cdots (1-\e(m x)),\qquad (x\in\Q).\label{eq:def-kontsevich}
\end{equation}
This function was studied by Kontsevich and is related to the Stoimenow's numbers, which are used to bound the number of linearly independent Vassiliev invariants of a given degree. In~\cite{Zagier2001}, a proof is sketched that $\varphi$ is a quantum modular form of weight $\frac32$ in the generalized meaning that
\begin{align}\label{rvas}
  \varphi(x) - \e(\pm 1/8)|x|^{-3/2}\varphi(-1/x)=h(x),\ \pm x\in\Q_{>0}\qquad \varphi(x+1)=\e(1/24)\varphi(x),
\end{align}
for a period function $h(x)$ which is smooth on $\R$ (and in fact also continues analytically to~$\C\setminus i\R_+$)\footnote{Here $h$ is, up to a constant, naturally interpreted as the period function for the $\frac12$ weight modular form given by the Dedekind $\eta$ function, namely~$h(x) = c \int_0^\infty \eta(iy) (y+ix)^{-3/2}\df y$ for some constant~$c\in\R$.}. See~\cite{BringmannRolen2016} for a proof in the context of period functions for half-integral weight forms, and~\cite{Goswami2021} for a generalization to periodic theta functions. In this case, due to the automorphy factors in the reciprocity relation~\eqref{rvas}, the definition of the limiting function~$\extpos{\varphi}$ is as follows: for~$x \in \Q\cap (0, 1]$, $x = [0; b_1, \dots, b_r]$ with~$r$ odd, denote~$\sigma(x) = 3 + \sum_{1\leq j \leq r} (-1)^j b_j$. Then
\begin{equation}
  \extpos{\varphi}(x) := \begin{cases} \e(\frac{-1}{24}\sigma(x))\den(x)^{-3/2} \varphi(\bar x) & (x\in \Q\cap (0, 1]), \\ \lim_{\Q \ni y \to x} \extpos{\varphi}(y) & (x\not\in\Q). \end{cases}\label{eq:def-extpos-kontsevich}
\end{equation}
By suitable variants of Theorems~\ref{ds1} and~\ref{dless1}, we obtain the following result.

\begin{corollary}\label{corvas}
The map~$\extpos{\varphi}$ is continuous and in fact almost everywhere locally~$(3/4-\eps)$-Hölder continuous for any~$\eps>0$.

Moreover, the multiset
  $$ \Big\{ \e(\tfrac{-1}{24}\sigma(x)) q^{-3/2} \varphi(\tfrac aq) \colon 1\leq a \leq q, (a, q)=1\Big\} $$
  becomes distributed, as~$q\to \infty$, according to~$ \extpos{\varphi}_\ast(\df \nu)$.   For any~$\theta\in\R$, the cumulative distribution function of $(\Re\e^{i\theta}\extpos{\varphi})_\ast(\df \nu)$ is continuous.
\end{corollary}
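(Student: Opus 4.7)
The strategy is to adapt the proofs of Theorems~\ref{ds1}, \ref{ds1-ext} and~\ref{dless1} to accommodate the automorphy factors $\e(1/24)$ (from translation) and $\e(\pm 1/8)$ (from inversion) appearing in the generalized quantum-modularity relations~\eqref{rvas} of~$\varphi$. Iterating these relations along the Euclidean algorithm for $x=a/q=[0;b_1,\dots,b_r]$ yields a telescoping expression of the shape
\[
  \varphi(a/q) = \e(\sigma(a/q)/24)\, q^{3/2}\, \extpos{\varphi}(a/q) + (\text{linear combination of values of } h),
\]
where the phase $\sigma(a/q)$ is precisely the accumulated contribution of the translation/inversion automorphy factors along the algorithm; this is why its expression involves the alternating sum $\sum_j (-1)^j b_j$ of partial quotients. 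One first checks that $\sigma$ is well-defined on $\Q$ (the two admissible continued fraction representations yield compatible values modulo~$24$) and that the corresponding cocycle identity makes the limit~\eqref{fst-ext}, suitably modified by the phase, converge on a set of full measure.

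For the first assertion (regularity of $\extpos{\varphi}$), the period function $h$ is smooth on $\R$, hence bounded on $[-1,1]$, so both the hypothesis $h(x) = O(|x|^{-3/2})$ of Theorem~\ref{ds1} and the stronger condition $h(x) = o(|x|^{-3/2})$ as $x\to 0$ are trivially satisfied. The variant of Theorem~\ref{ds1} adapted to~\eqref{rvas} then shows that $\extpos{\varphi}$ extends continuously to all of $\R$, and the almost-everywhere local $(3/4 - \eps)$-Hölder continuity is a direct consequence of its last assertion, since $\tfrac12 \Re(k) = 3/4$.

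The second assertion (equidistribution) follows from the corresponding variant of Theorem~\ref{dless1}: the phase $\e(-\sigma(a/q)/24)$ depends only on the continued fraction expansion of $a/q$ and is therefore naturally compatible with the transfer-operator approach to the Gauss map that drives the proof of Theorem~\ref{dless1}. For the third assertion, the continuity of the cumulative distribution function of $(\Re \e^{i\theta} \extpos{\varphi})_\ast(\df\nu)$, we rely on the diffuseness strengthening announced in the Remark following Theorem~\ref{dless1}. The period function $h$, being (up to a non-zero constant) the Eichler-type integral of the half-integral weight form $\eta$, is not identically zero and is continuous on $[-1,1]$; hence the hypotheses of Theorem~\ref{dless1} apply in the stronger linear-form sense, giving the desired atomless conclusion for every $\theta\in\R$.

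The main obstacle is the careful bookkeeping of the automorphy factors along the iteration, encoded in the cocycle $\sigma$. One has to verify its compatibility with the two continued fraction representations of each rational, and ensure that the normalized quantity $\e(-\sigma(x)/24)\,\den(x)^{-3/2}\,\varphi(\bar x)$ behaves well enough along approximations by rationals of mildly growing partial quotients to admit the limit in~\eqref{fst-ext} almost everywhere. Once this is in place, all three claims reduce to direct applications of the adapted versions of Theorems~\ref{ds1} and~\ref{dless1}.
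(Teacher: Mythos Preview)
Your approach is the same as the paper's, and in fact the twisted versions of Theorems~\ref{ds1} and~\ref{dless1} you describe are already established in Section~\ref{stds1} and in Theorem~\ref{gdless1}, which were written from the outset for the generalized relations~\eqref{eq:recip-f-general} with an arbitrary root of unity~$\vth$. So no further ``adaptation'' or cocycle bookkeeping is needed: one simply applies those results with~$\vth=\e(1/24)$ and~$k=3/2$, and the phase~$\sigma$ is exactly the quantity~$\vth_r$ of~\eqref{vtj}, whose compatibility with the two CF expansions of a rational was already checked right after~\eqref{eq:expansion-f-h-general}.

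The one step where your argument is genuinely incomplete is the diffuseness claim. Because~$k=3/2$ is real, the relevant hypothesis is case~\ref{it:cond-kpos-1} of Theorem~\ref{gdless1} in the branch~$\Im k=0$, which asks for~$p\in\Z$ and~$\alpha\in(-1,1)$ with~$\phi(\vth^p h(\alpha))\neq 0$. Saying only that~$h$ is continuous and not identically zero does not verify this for a \emph{given} linear form~$\phi$: if~$h(\alpha)$ happened to lie in~$\ker\phi$ for the~$\alpha$ you pick, you would be stuck without invoking the twist~$\vth^p$. The paper closes this by using the precise value~$h(0)=1$ (from~\cite{Zagier2001}) together with~$\vth=\e(1/24)\notin\R$: for any~$\phi(z)=\Re(\e^{i\theta}z)$, at least one of~$\phi(1)$ or~$\phi(\vth)$ is nonzero, so~\ref{it:cond-kpos-1} holds with~$\alpha=0$ and~$p\in\{0,1\}$. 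You should make this explicit rather than appealing to the informal Remark after Theorem~\ref{dless1}.
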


\begin{remark}
  The function~$\sigma(x)$ can be expressed in terms of the Dedekind sum $s(x)$~\cite{Rademacher1972}: using~\cite[Theorem~1]{Hickerson1977}, we find
  $$ \sigma(x) = x + \bar x + 12 s(x), \qquad (x\in (0, 1)\cap\Q). $$  
\end{remark}

The function~$\extpos{\varphi} : [0, 1] \to \C$ is depicted in Figure~\ref{fig:kontsevich}. Note that~$\extpos{\varphi}(0) = 1$.

\begin{figure}[h]
  \centering
  \begin{tabular}{cc}
    \image{.45}{graphs/kontsevich_real} & \image{.45}{graphs/kontsevich_imag}
  \end{tabular}
  \caption{Approximate plots of~$\Re \extpos{\varphi}$ (left) and $\Im \extpos{\varphi}$ (right) defined in~\eqref{eq:def-kontsevich}.}
  \label{fig:kontsevich}
\end{figure}

If we normalize $\varphi$ by letting $ \varphi^\dagger(x):=\den(x)^{-3/2}\varphi(x)$, then as an immediate consequence of Corollary~\ref{corvas} we obtain that $\overline{\varphi^\dagger(\Q)}$ is equal to the curve~$\CG$ given by
$$ \CG := \bigcup_{n=1}^{24} \e^{\pi i n/12} \extpos{\varphi}([0, 1]), $$
which passes through the~$24$-th roots of unity. In Figure~\ref{fig:graph-extposphi}, we have plotted the points~$\extpos{\varphi}(x)$ for~$\den(x)\leq 101$. The limiting curve is the graph of~$\extpos{\varphi}$. In Figure~\ref{fig:graph-phidagger}, we have plotted the points~$\varphi^\dagger(x)$ for~$\den(x)\leq 307$, colored depending on~$\sigma(x)$. The limiting curve is the curve~$\CG$.

\begin{figure}
  \centering
  \begin{subfigure}{.5\textwidth}
    \centering
    \image{.5}{graphs/Kontsevich_q_le_101_dephased}
    \caption{Approximate plot of~$\extpos{\varphi}([0, 1])$}
    \label{fig:graph-extposphi}
  \end{subfigure}%
  \begin{subfigure}{.5\textwidth}
    \centering{}
    \image{.8}{graphs/Kontsevich_q_le_307_phased_mt_2}
    \caption{Approximate plot of~$\varphi^\dagger([0, 1])$}
    \label{fig:graph-phidagger}
  \end{subfigure}
  \caption{}
\end{figure}

\subsubsection{Generalized cotangent sums}

Finally we mention the generalized cotangent functions, studied in~\cite{Bettin2013a} and defined for~$b\in\Z$, $q\in\N_{>0}$ coprime by
$$
c_a\bigg(\frac bq\bigg):=q^a\sum_{m=1}^{q-1}\cot\bigg(\frac{\pi m b}{q}\bigg)\zeta\bigg(-a,\frac mq\bigg),
$$
where~$a\in \C$. Here $\zeta(s,x)$ denotes the Hurwitz zeta function, which is the analytic continuation of~$s\mapsto \sum_{n\geq 1} (n+x)^{-s}$. When~$a=-1$, the poles of $\zeta$ cancel out and~$c_a$ reduces in that case to the classical Dedekind sum~\cite{Rademacher1972}. In~\cite{Bettin2013a}, it was shown that the functions~$c_a$ are almost quantum modular forms of weight $1+a$ with period function $h_a$ satisfying, in a neighborhood of~$0$, the estimate
$$ h_a(x) = \frac{\zeta(1-a)}{\zeta(-a)}\frac {i}{\pi x} - \sgn(x)\cot(\frac{\pi a}2)\frac{i}{\abs{x}^{1+a}} + O(1). $$
The meaning of ``almost'' here will be made precise later on.
The function~$c_a$ is very closely related to Eichler integrals of certain real-analytic Eisenstein series, see~\cite[Section~2]{Bettin2013a} and~\cite{LewisZagier2019}.
We will deduce from our main results the following statement on the distribution of values of~$c_a$.

\begin{corollary}\label{cordedek}
  If~$\Re(a)<-1$, then the multiset
  $$ \Big\{ c_a(\tfrac bq) \colon 1\leq b \leq q, (b, q)=1\Big\} $$
  becomes distributed, as~$q\to \infty$, according to~$\df\lambda_a := (\extneg{c_a})_\ast(\df \nu)$. 
  If~$\Re(a)>-1$, then the multiset
  $$ \Big\{ q^{-1-a} c_a(\tfrac bq) \colon 1\leq b \leq q, (b, q)=1\Big\} $$
  becomes distributed, as~$q\to \infty$, according to a measure~$\df\lambda_a := (\extpos{c_a})_\ast(\df \nu)$. Moreover, 
  \begin{enumerate}[ref=(\arabic*)]
    \item When~$a\in (2\Z_{\geq0}+1)$, the measure~$\df\lambda_a$ is supported on~$\{0\}$.
    \item\label{it:dedek2} When~$a\in \R\setminus(2\Z_{\geq -1}+1)$, the measure~$\df\lambda_a$ is supported inside~$\R$ and is diffuse.
    \item\label{it:dedek3} When~$\Re(a)\neq -1$ and~$a\not\in\R$, then for any non-zero linear form~$\phi:\C\to\R$, the push-forward measure~$\phi_\ast(\df\lambda_a)$ on~$\R$ is diffuse.
  \end{enumerate}
  When~$\Re(a)>0$, the map~$\extpos{c_a}$ is continuous at irrationals. Moreover, for~$0<\Re(a)\leq 1$, the map~$x\mapsto \extpos{c_a}(x)$ is~$\alpha$-Hölder-continuous locally almost everywhere, for any~$\alpha<(\Re(a)+1)/2$. When~$\Re(a)>1$, the same is true for the map~$x\mapsto \extpos{c_a}(x) + x a \zeta(1-a)/\pi$, and in particular the map~$\extpos{c_a}$ is then differentiable almost everywhere with derivative~$-a\zeta(1-a)/\pi$.
\end{corollary}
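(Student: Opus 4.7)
The strategy is to invoke Theorems~\ref{dless0} and~\ref{dless1} on $c_a$ after verifying the hypotheses on the period function~$h_a$. The qualifier "almost" quantum modular presumably refers either to a modest failure of strict $1$-periodicity or to the presence of singularities in $h_a$ at $0$; in either case, the theorems of the previous subsections apply directly after subtracting from $c_a$ an explicit elementary function (linear in $x$ and $\bar x_q$, derived from the construction in~\cite{Bettin2013a}) so that the modified function~$\tilde c_a$ satisfies~\eqref{eq:f-weak-per} and~\eqref{mcg} exactly, with an associated period function~$\tilde h_a$ that is real-analytic on $(-1,1)\setminus\{0\}$ and shares with $h_a$ the two-term singular expansion stated in the introduction.

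For $\Re(a)<-1$, the weight $k=1+a$ has negative real part; the term $|x|^{-1-a}$ is bounded near $0$, so $\tilde h_a(x)=O(|x|^{-1})$, which trivially meets~\eqref{coh} for any $\delta>0$. Theorem~\ref{dless0} then yields the limiting distribution~$\lambda_a$, and the diffuseness follows from the real-analyticity of $\tilde h_a$ together with the non-constancy of~$c_a$ on~$\Q_{>0}$. For $\Re(a)>-1$, the weight is of positive real part and one has $\tilde h_a(x)\ll |x|^{-1-\Re(a)}\ll \e^{|x|^{-1+\delta}}$ for any $\delta>0$; Theorem~\ref{ds1-ext} produces $\extpos{c_a}$, Theorem~\ref{dless1} yields the distributional convergence of $q^{-1-a}c_a(b/q)$, and the final clause of Theorem~\ref{ds1} gives the $\alpha$-Hölder regularity for $\alpha<(1+\Re(a))/2$.

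The diffuseness cases~\ref{it:dedek2} and~\ref{it:dedek3} are obtained by matching the singular expansion of $\tilde h_a$ with the asymptotic criterion of Theorem~\ref{dless1}. When~$a\not\in\R$, $\cot(\pi a/2)$ has non-zero imaginary part and the leading singular coefficient is not collinear with $1-|x|^{-k}$, which yields diffuseness for every non-zero linear form~$\phi$. When~$a\in\R\setminus(2\Z_{\geq -1}+1)$ at least one singular coefficient is non-zero with an exponent different from~$k=1+a$: one uses $\lambda=1$ when $-1<\Re(a)<0$; when $\Re(a)>0$, one first removes the leading $|x|^{-1-a}$-contribution via an additional correction function derived from the construction of~$\tilde c_a$, thereby isolating a residual singularity with exponent $\lambda=1\neq k$. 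Case~(1), $a$ a positive odd integer, is handled separately: both $\zeta(1-a)$ and $\cot(\pi a/2)$ vanish, so $\tilde h_a$ extends smoothly to $[-1,1]$, and using the Bernoulli-polynomial identity $\zeta(-a,m/q)=-B_{a+1}(m/q)/(a+1)$ one obtains a closed-form evaluation from which $c_a(b/q)=o(q^{1+a})$ is read off, so that $\lambda_a=\delta_0$.

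The differentiability statement in the range $\Re(a)>1$ is the most delicate point. Here the residual $\tilde h_a(x)\sim\frac{i\zeta(1-a)}{\pi\zeta(-a)x}$ contribution is not absorbed by the previous reduction, and iterating the reciprocity relation produces in $\extpos{c_a}$ a linear drift whose leading coefficient, after a direct telescoping calculation, equals $-a\zeta(1-a)/\pi$. Adding the linear correction $x\cdot a\zeta(1-a)/\pi$ therefore cancels this contribution and places us in the framework of Theorem~\ref{ds1} with an effectively bounded period function, yielding $\alpha$-Hölder regularity for all $\alpha<1$ and, since $\Re(k)=1+\Re(a)>2$, derivative zero almost everywhere; equivalently, $\extpos{c_a}$ is differentiable a.e.\ with derivative $-a\zeta(1-a)/\pi$. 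The principal obstacle is the precise bookkeeping of the two singular contributions of $\tilde h_a$, the explicit identification of the required elementary correction~$g_a$, and the computation of the linear drift constant in the differentiability range.
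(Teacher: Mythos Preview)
Your outline is on the right track and correctly identifies the main mechanism---pass to a modification $\tilde c_a$ satisfying the weak periodicity~\eqref{eq:f-weak-per}, then apply the general theorems---but there are genuine gaps.

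The most serious one is in the diffuseness argument for $\Re(a)>-1$. After the correction, the limit functions are related by $\extpos{c_a}(x)=\extpos{\tilde c_a}(x)-a\kappa_1(a)\{x\}$ with $\kappa_1(a)=\zeta(1-a)/\pi$. Establishing that $(\phi\circ\extpos{\tilde c_a})_*(\df\nu)$ is diffuse does \emph{not} imply the same for $(\phi\circ\extpos{c_a})_*(\df\nu)$: a level set of the latter is the set where the graph of $\phi\circ\extpos{\tilde c_a}$ meets a line of nonzero slope $\phi(a\kappa_1(a))$, and nothing you have proved rules out this having positive measure. The paper treats this separately by sub-range. For $\Re(a)>1$ it uses that $\extpos{\tilde c_a}$ is $\alpha$-H\"older with $\alpha>1$ and an accumulation-point argument to exclude a positive-measure set on which it is affine. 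For $-1<\Re(a)\leq1$ it instead applies the diffuseness criterion directly to a \emph{different} modification $\hat c_a=c_a-\kappa_2(a)\sgn(x)$ (no $\rho$-term), whose period function acquires an extra discontinuous piece $-a\kappa_1(a)\den(x)^a\num(x)^{-1}\1_\Q(x)$; this is negligible only when $\Re(a)<1$ and needs a further refinement (Remark~\ref{rationals}) on the line $\Re(a)=1$.

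You also skip two exceptional parameter sets. At $a=0$ one has $h_0(x)\sim-(\log|x|)/(\pi x)$, so no asymptotic $h(x)\sim cx^{-\lambda}$ holds and the hypothesis of Theorem~\ref{dless1} fails; the paper salvages this via the quantitative refinement in Remark~\ref{nlim}. When $\zeta(a)=0$, one has $\kappa_1(a)=0$ and the $x^{-1}$ singularity disappears entirely; this case is handled separately via the continuous-$h$ criterion~\ref{it:cond-kpos-1}. For $\Re(a)<-1$ with $a\notin\R$, note that Theorem~\ref{dless0} alone only yields diffuseness of $\lambda_a$, not of $\phi_*(\lambda_a)$ for every $\phi$; the paper invokes Proposition~\ref{prop:cdf-cont-kneg} and uses the full asymptotic expansion of $h_a$ (the Bernoulli-number coefficients in~\eqref{ash3}) to derive the required contradiction. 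Finally, a minor slip: $\zeta(1-a)$ does not vanish at $a=1$ ($\zeta(0)=-1/2$); the vanishing of $c_a$ at positive odd $a$ is an exact identity (as your Bernoulli-polynomial symmetry argument would in fact show), not merely $c_a=o(q^{1+a})$.
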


In this statement we have kept the notations~$\extneg{c_a}, \extpos{c_a}$, however we warn that the actual definitions differ from those in Theorems~\ref{ds} and \ref{ds1}, mainly due to the fact $c_a$ is only ``almost'' a quantum modular form. The precise definitions are given in Section~\ref{sec:cotangent-sums} below in several cases depending on the value of~$a$.

In the case~\ref{it:dedek3} above, we obviously also have that~$\df\lambda_a$ itself has no atoms.

The empirical cumulative distribution functions (CDF) in Corollary~\ref{cordedek} for~$q=5000$ are plotted in Figure~\ref{fig:dedek} for some real values of~$a$. The fact that the corresponding measures are diffuse, stated in item~\ref{it:dedek2} of the previous corollary, translates into the continuity of the limiting functions.

\begin{figure}[h]
  \centering
  \image{.45}{graphs/cotan_a=-2}
  \image{.45}{graphs/cotan_a=half}
  \caption{CDF of the empirical measures at~$q=5000$ in Corollary~\ref{cordedek}, for~$a = -2$ (left) and~$a = 1/2$ (right)}
  \label{fig:dedek}
\end{figure}

The case~$a=0$ was studied earlier in~\cite{Maier2016, Bettin2015}. The fact that the values of $c_0$ have a limiting distribution was proved in~\cite{Maier2016} with an elaborate argument. A simpler proof was then given in~\cite{Bettin2015}. The methods of~\cite{Maier2016, Bettin2015} use an explicit computation of the moments of both the original QMF and the limiting measure; thereafter, the limiting measure is identified through the method of moments, subject to a good bound on these moments. These ingredients are not available in the generality of Theorem~\ref{dless1}. The proof that we give of Theorem~\ref{dless1} instead identifies directly the limiting function, not passing through its moments, thus allowing for much more general period functions~$h$.

The continuity of the cumulative distribution function in the case~$a=0$ was also obtained in~\cite{Maier2016}. The method used there however relies crucially on the fact that $c_0(\frac{\overline b_q}{q})$ can be written explicitly as $\frac q{2\pi}\sum_{n=1}^\infty(\frac12-\{n\frac bq\})/{n}$, where $\{x\}$ is the fractional part of $x$, and there is no clear way to extend it to the generality of Theorems~\ref{dless0} and~\ref{dless1}.

The (weight zero) case of~$a=-1$ corresponds to Dedekind sums, and it is proved in~\cite{Vardi1993} (see~\cite[Section~9.4]{BettinDrappeau} for another argument) that the values~$c_{-1}(x)$ tend to distribute according to a Cauchy distribution when~$x$ is picked at random among rationals of denominators at most~$Q$, $Q\to\infty$. It might be interesting to know if the CDF of the Cauchy distribution can be obtained as a limit of the distribution of values of~$\extpos{c_a}, \extneg{c_a}$, appropriately normalized, as~$a\to -1$; in other words, if the limiting functions in Figure~\ref{fig:dedek} tend to the CDF of the Cauchy distribution as~$a\to -1$ after an appropriate normalization.

In Figure~\ref{fig:cotangent} below, we present the plots of the real part of~$\extneg{c_a}$ and~$\extpos{c_a}$ for various values of~$a$. The relevant period function~$h(x)=h_a(x)$ roughly satisfies~$h(x) \sim \kappa(a) x^{-1} + O(1)$ for some constant~$\kappa(a)$, see~\eqref{ash2} below. When~$\Re(a)<-1$, we witness a rise in regularity in $\extneg{c_a}$ as~$\Re(a)$ decreases, but without reaching full continuity, due to the fact that~$h_a$ has a pole at~$0$. When~$\Re(a)>0$, Corollary~\ref{cordedek} shows that the map~$\extpos{c_a}$ is continuous at irrationals, and when~$\Re(a)>1$, that it has derivative~$-a\zeta(1-a)/\pi$ almost everywhere. In Figure~\ref{fig:cotg-1.5}, we have~$a=1.5$, and~$-a\zeta(1-a)/\pi \approx 0.09926$.

\begin{figure}[h]
  \begin{subfigure}{.5\textwidth}
    \centering
    \image{.9}{graphs/cotangent_sum_1D_a=-0,7_q=24001_nbp=10000}
    \caption{$(a, q, N) = (-0.7, 24001, 10000)$}
  \end{subfigure}%
  \begin{subfigure}{.5\textwidth}
    \centering
    \image{.9}{graphs/cotangent_sum_1D_a=-3,2_q=2001_nb=2000}
    \caption{$(a, q, N) = (-3.2, 2001, 2000)$}
  \end{subfigure}
  \begin{subfigure}{.5\textwidth}
    \centering
    \image{.9}{graphs/1D_for_a=-0,5_cap_200}
    \caption{$(a, q, N) = (-0.5, 24001, 3000)$}
  \end{subfigure}%
  \begin{subfigure}{.5\textwidth}
    \centering
    \image{.9}{graphs/1D_real_for_a=-0,5+0,51i}
    \caption{$(a, q, N) = (-0.5+0.51i, 24001, 3000)$}
  \end{subfigure}
  \begin{subfigure}{.5\textwidth}
    \centering
    \image{.9}{graphs/1D_for_a=0}
    \caption{$(a, q, N) = (0, 24001, 3000)$}
  \end{subfigure}%
  \begin{subfigure}{.5\textwidth}
    \centering
    \image{.9}{graphs/1D_for_a=0,5}
    \caption{$(a, q, N) = (0.5, 24001, 3000)$}
  \end{subfigure}
  \begin{subfigure}{.5\textwidth}
    \centering
    \image{.9}{graphs/1D_real_for_a=0,5+1,39i}
    \caption{$(a, q, N) = (0.5+1.39i, 24001, 3000)$}
  \end{subfigure}%
  \begin{subfigure}{.5\textwidth}
    \centering
    \image{.9}{graphs/1D_for_a=1,5}
    \caption{$(a, q, N) = (1.5, 24001, 3000)$}
    \label{fig:cotg-1.5}
  \end{subfigure}

  \caption{Sample points of~$c_a$ at $N$ points of denominator~$q$}
  \label{fig:cotangent}
\end{figure}

\section{Extending the domain of quantum modular forms}

In this section we define the main tools and notations, and then prove Theorems~\ref{ds}, \ref{thm:kneg-fdagger-ext}, \ref{ds1} and \ref{ds1-ext}.

\subsection{Notations}\label{sec:notations}

Let $T(x):=\{1/x\}$ be the Gauss map, $T^i$ its $i$-th iterate. For $x\in\Q$, we let $r:=r(x)$ be the minimum non-negative integer such that $T^{r}(x)=0$ and, for~$0\leq j < r$, we write $T^j(x)$ in simplest terms as
\begin{equation}
  T^j(x) = \frac{u_{j+1}(x)}{u_j(x)}.\label{eq:def-uj}
\end{equation}
In particular, we have
$$ u_0(x) = \den(x), \qquad u_r(x) = 1 ,$$
where~$\den(x)$ is the denominator of~$x$. Whenever~$u_{r-1}(x)>1$, we also define
$$ u_{r+1}(x) := 1, $$
which will be convenient to change the length of the continued fraction (CF) expansion.
Also, from the bound
\begin{equation}
  x T(x) \leq 1/2,\label{eq:size-T-consec}
\end{equation}
we deduce
\begin{equation}
  \frac{u_j(x)}{u_0(x)} \ll 2^{-j/2}, \qquad u_{r-j}(x) \gg 2^{j/2}.\label{eq:bounds-uj}
\end{equation}

We will also use the following other decomposition. Given~$y \in \Q\cap (0, 1)$ and writing uniquely $y$ as its CF expansion
$$ y = [0; c_1, \dotsc, c_s] $$
with~$s$ odd, we define
\begin{equation}
  v_0(y) := 1, \qquad v_j(y) := \den([0; c_1, \dotsc, c_j]) \qquad (1\leq j \leq s).\label{eq:def-vj}
\end{equation}
Equivalently,~$v_j(y) = \den([0; c_j, \dotsc, c_1])$.
Note that, with~$\bar{y} = [0; c_s, \dotsc, c_1]$, we have
$$ v_j(y) = u_{s-j}(\bar{y}), $$
and as such
\begin{equation}
  v_j(y) \gg 2^{j/2}\label{eq:growth-vj}
\end{equation}
with a uniform constant.

\subsection{Iteration of the reciprocity formula}

First, we notice that by the Euclid's algorithm, any QMF $f$ of level $1$ is determined by $h$ and its value at $0$. 
Indeed, by repeatedly applying~\eqref{mcg} and periodicity we obtain, for $x=\frac aq\in[0,1)$ in reduced form,
\begin{equation}\label{ffff}
  f(x)=\sum_{j=0}^{r-1}\bigg(\prod_{i=0}^{j-1}T^{i}(x)\bigg)^{-k} h((-1)^{j}T^j(x))+q^{k} f(0).
\end{equation}
Notice that we used that $\prod_{i=0}^{r-1}T^{i}(x)=1/q$.

The above quantities are naturally expressed in terms of continued fractions. Indeed, if $x\in(0,1)$, then the length of its CF expansion $x=[0;b_1,\dots,b_{r}]$ with minimal length (i.e. $b_{r}\neq1$ if~$r>1$) is $r$. Also, for $0\leq j\leq r$ we have $q \prod_{i=0}^{j-1}T^{i}(x)=(\prod_{i=j}^{r-1}T^{i}(x))^{-1}$ is the denominator $u_{j}(x)$ defined in~\eqref{eq:def-uj}.  Notice also that $u_0(x)=q$. Thus, abbreviating~$u_j = u_j(x)$, we can then rewrite~\eqref{ffff} as
\begin{equation}\label{ffff2}
  f(x)=\sum_{j=0}^{r-1}\Big(\frac{u_j}{u_0}\Big)^{-k} h\bigg((-1)^{j}\frac{u_{j+1}}{u_{j}}\bigg) + u_0^{k} f(0).
\end{equation}
This formula holds also if $r$ is formally replaced by~$r+1$, which corresponds to expressing~$x = [0;b_1,\dots,b_{r}]$ rather as~$[0; b_1, \dotsc, b_r - 1, 1]$, since the additional term in~\eqref{ffff2} is
\begin{equation}
  u_0^k h((-1)^r) = u_0^k(f(0)-f(0)) = 0,\label{eq:series-ffff-extended}
\end{equation}
by~\eqref{mcg} and periodicity. 

Finally, we deduce another variant of~\eqref{ffff2}. First, we notice that if $[0;b_1,\dots,b_r]$ is the CF expansion of $x\neq0$ with $r$ odd, then $\overline x=[0;b_r,\dots,b_1]$. In particular, after the change of variables $r\to r-j$,~\eqref{ffff2} can be rewritten as
\begin{equation}\label{ffff3}
  q^{-k}f(x)=\Psi(\overline x)
\end{equation}
where, for $y=[0;c_1,\dots,c_r]$ with $r$ odd, and the notation~\eqref{eq:def-vj},
\begin{equation}\label{ffff4}
  \Psi(y):=\sum_{j=1}^{r}v_{j}^{-k}h\bigg((-1)^{j-1}\frac{v_{j-1}}{v_{j}}\bigg)+f(0). 
\end{equation}

From the expansions~\eqref{ffff2} and \eqref{ffff4}, and the fact that both quantities~$u_j/u_0$ and~$v_j^{-1}$ decrease exponentially fast with~$j$, we see the difference of behaviour according to the sign of~$\Re(k)$, and the relevance of switching~$x$ to~$\bar x$ when~$\Re(k)>0$.

\subsection{Continuity almost everywhere and extension}

In this section, we state and prove a technical proposition which will be helpful when extending~$f$ almost everywhere in Theorems~\ref{thm:kneg-fdagger-ext} and \ref{ds1-ext}.

We first need the following Lemma. In the following we will use the notation $r(x)=\infty$ if $x\notin\Q$.

\begin{lemma}\label{bll0}
  For each~$B>0$, let
  $$ \gT(B) := \{x\in\R \colon  b_j(x) \leq \max(B, j(\log j)^2)\text{ for all } 1\leq j\leq r(x)\}. $$
  The set~$\gT(B)$ is invariant under~$x\mapsto x+1$. Moreover,~$\meas(\gT(B)\cap[0, 1]) = 1 + o(1)$ as~$B\to\infty$ and the set 
  $$ \gS = \bigcup_{B>0} \gT(B) $$
  is of full Lebesgue measure.
\end{lemma}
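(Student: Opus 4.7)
I would split the statement into its three claims -- shift-invariance, the asymptotic $\meas(\gT(B)\cap[0,1]) = 1+o(1)$, and full Lebesgue measure of~$\gS$ -- and handle them in turn.

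For shift-invariance, I would simply observe that adding an integer to $x$ only modifies $b_0(x)=\lfloor x\rfloor$ and leaves the partial quotients $b_j(x)$ for $j\geq 1$ unchanged. Since the defining condition of $\gT(B)$ involves only these $b_j$ with $j\geq 1$, the set $\gT(B)$ is $\Z$-translation invariant.

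For the measure estimate, the plan is to bound the complement in $[0,1]$ by a union bound using the uniform-in-$j$ tail estimate
\begin{equation*}
  \meas\{x\in[0,1]: b_j(x) > N\} \ll \frac{1}{N}, \qquad (j\geq 1,\ N\geq 1),
\end{equation*}
which I would derive from the facts that the Gauss measure $\df\mu_G=(\log 2)^{-1}\df x/(1+x)$ is $T$-invariant, has density bounded away from $0$ and $\infty$ on $[0,1]$, and that $\mu_G\{b_1>N\}\ll 1/N$ by direct computation from the definition of the Gauss map. Setting $N_j:=\max(B,\,j(\log j)^2)$, the measure of the complement is at most $\sum_{j\geq 1}N_j^{-1}$. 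I would then split this sum at $j_0\asymp B/(\log B)^2$, which is calibrated so that $j_0(\log j_0)^2\asymp B$: the head $\sum_{j\leq j_0}1/B\ll j_0/B\ll 1/(\log B)^2$ and the tail $\sum_{j>j_0}1/(j(\log j)^2)\ll 1/\log j_0\ll 1/\log B$ both tend to $0$ as $B\to\infty$.

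For the full-measure claim, I would appeal to monotone continuity of measure: since $\gT(B)\subseteq\gT(B')$ whenever $B\leq B'$, we have $\meas(\gS\cap[0,1])=\lim_{B\to\infty}\meas(\gT(B)\cap[0,1])=1$, and shift-invariance upgrades this to full Lebesgue measure on $\R$. I do not expect any genuine obstacle here: the only non-routine input is the uniform-in-$j$ tail bound on $b_j$, which is a standard consequence of $T$-invariance of the Gauss measure and its equivalence with Lebesgue measure.
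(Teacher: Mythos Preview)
Your proof is correct and covers all three claims cleanly. Compared with the paper's own argument, the logical order is reversed: the paper first invokes Khintchine's Theorem~30 to conclude directly that~$\gS$ has full measure (this theorem is precisely the Borel--Cantelli statement that if~$\sum_j 1/\phi(j)<\infty$ then almost every~$x$ satisfies~$b_j(x)\leq\phi(j)$ for all large~$j$), and then deduces~$\meas(\gT(B)\cap[0,1])\to 1$ from the nesting~$\gT(B')\subseteq\gT(B)$ for~$B'\leq B$ and continuity of measure. You instead establish~$\meas(\gT(B)\cap[0,1])\to 1$ first, via the explicit union bound and the uniform-in-$j$ tail estimate coming from Gauss-measure invariance, and then pass to~$\gS$. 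The underlying analytic content is the same---your tail bound plus union bound is exactly what proves Khintchine's theorem in this case---so the difference is that your argument is self-contained while the paper's defers to a reference; neither approach yields anything the other does not.
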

\begin{proof}
  By~\cite[Theorem~30]{Khintchine1963} one deduces that $\gS$ has full measure. Since $ \gT(B')\seq  \gT(B)$ if $0<B'\leq B$, one then deduces that $\meas(\gT(B)\cap[0, 1]) = 1 + o(1)$ as~$B\to\infty$.
\end{proof}

Given~$B>0$, an integer~$m\geq 1$ and a number~$x\in \gT(B)$, define
$$ V(B, m, x) := \big\{ x' \in \Q\cap \gT(B) \colon r(x') \geq m,\ \forall j\leq m, b_j(x') = b_j(x) \big\}, $$
the set of all rationals in~$\gT(B)$ whose first~$m$ coefficients coincide with those of~$x$.

\begin{definition}[Property~$\cS(\lambda)$]
  We say that~$f:\Q\to\C$ has the property~$\cS(\lambda)$ if the quantity
  $$ \Delta_\lambda(m) := \sup_{x\in\gT(m^\lambda)} \sup_{x', x'' \in V(m^\lambda, m, x)} \abs{f(x') - f(x'')} $$
  satisfies
  \begin{equation}
    \Delta_\lambda(m) \to 0\label{eq:precont-Delta}
  \end{equation}
  as~$m\to\infty$.
\end{definition}

\begin{proposition}\label{prop:fext-unbound-continuity}
  Let~$\lambda>1$, and assume that~$f$ has the property~$\cS(\lambda)$.
  Then for any~$x\in \gS \setminus\Q$, the limit
  \begin{equation}
    f^*(x) := \lim_{y\in \Q\cap\gT(B), y\to x} f(y)\label{eq:def-fext-unbound}
  \end{equation}
  exists for any $B>0$ such that $x\in \gT(B)\setminus\Q$. Moreover, let~$m\in\N$ and~$B>0$ be given with~$m \geq B^{1/\lambda}$, and define~$f^*(x) := f(x)$ for~$x\in\Q$. Then uniformly in~$x, x' \in \gT(B)$ satisfying~$r(x), r(x')\geq m$ and~$b_j(x) = b_j(x')$ for~$j\leq m$, we have
  \begin{equation}
    f^*(x) - f^*(x') = o(1), \qquad (m\to \infty),\label{eq:unicont-fext-unbound}
  \end{equation}
  where the rate of decay may depend on~$\lambda$, but not on~$B$.
\end{proposition}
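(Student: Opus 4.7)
The plan is to use property $\cS(\lambda)$ to show that $(f(x_j))_j$ is Cauchy along the convergents $x_j$ of $x$, and then to read off the existence of $f^*$ and the uniform continuity from this single observation.

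First I would check that each convergent $x_j$ of $x\in\gT(B)$ itself lies in $\gT(B)$: this is immediate, since $x_j$ has CF expansion $[0; b_1(x),\dots,b_j(x)]$, so every inequality defining $\gT(B)$ transfers. Fix $m \geq B^{1/\lambda}$, so that $B \leq m^\lambda$ and hence $\gT(B) \subseteq \gT(m^\lambda)$. For any $\ell \geq j \geq m$, both $x_j$ and $x_\ell$ lie in $V(m^\lambda, m, x)$, so property~$\cS(\lambda)$ gives
\[
|f(x_j) - f(x_\ell)| \leq \Delta_\lambda(m) \to 0 \quad \text{as } m \to \infty.
\]
Thus $(f(x_j))_j$ is Cauchy, and I would define $f^*(x)$ as its limit. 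Letting $\ell \to \infty$ in the above yields the useful quantitative bound $|f(x_j) - f^*(x)| \leq \Delta_\lambda(m)$ for all $j \geq m$.

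For the existence of the limit~\eqref{eq:def-fext-unbound}, I would take an arbitrary sequence $(y_n)\subset\Q\cap\gT(B)$ with $y_n \to x \in \gT(B)\setminus\Q$. For $n$ large enough, the first $m$ partial quotients of $y_n$ agree with those of $x$ (this is the standard continuity of the CF expansion at irrationals), so that $y_n \in V(m^\lambda, m, x)$; a further application of $\cS(\lambda)$ gives $|f(y_n) - f(x_m)| \leq \Delta_\lambda(m)$. Combining with $|f(x_m) - f^*(x)| \leq \Delta_\lambda(m)$ and letting $m \to \infty$, one concludes that $f(y_n) \to f^*(x)$ as required.

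For the uniform estimate~\eqref{eq:unicont-fext-unbound}, I would observe that the hypothesis $b_j(x) = b_j(x')$ for $j \leq m$ forces the $m$-th convergents to coincide, $x_m = x'_m$. The bound $|f^*(x) - f(x_m)| \leq \Delta_\lambda(m)$ established above extends to the case $x \in \Q$ (either $x = x_m$, or else $x, x_m \in V(m^\lambda, m, x)$, and $f^*(x) = f(x)$ by definition). Applying this to both $x$ and $x'$ and combining via the triangle inequality yields $|f^*(x) - f^*(x')| \leq 2\Delta_\lambda(m) = o(1)$, with the rate encoded entirely in $\Delta_\lambda$ and hence independent of $B$. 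The only subtlety in the proof is the bookkeeping of the inclusions $\gT(B) \subseteq \gT(m^\lambda)$ and the condition $m \geq B^{1/\lambda}$ ensuring that the correct instance of $\cS(\lambda)$ applies; there is no substantive obstacle beyond this.
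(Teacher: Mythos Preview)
Your proof is correct and follows essentially the same approach as the paper: both use the inclusion $\gT(B)\subseteq\gT(m^\lambda)$ for $m\geq B^{1/\lambda}$ together with property~$\cS(\lambda)$ to bound differences by $\Delta_\lambda(m)$, and both pass through the convergents $x_\ell$ to handle the uniform estimate~\eqref{eq:unicont-fext-unbound}. The only organizational difference is that you first establish the limit along convergents and then upgrade to the full limit over $\Q\cap\gT(B)$, whereas the paper applies Cauchy's criterion directly to the full limit; this is a matter of presentation rather than substance.
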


\begin{remark}
  Let~$x\in \gT(B)\setminus\Q$. Since~$\gT(B') \subset \gT(B)$ for~$B \geq B' > 0$, from~\eqref{eq:def-fext-unbound} we also get~$f^*(x) = \lim_{y\in\Q\cap \gT(B'), y\to x} f(y)$, for any~$0<B'<B$ with~$x\in \gT(B')\setminus\Q$. In particular, the existence of the limit implies that its value~$f^*(x)$ is independent of~$B$. Notice that the rate of convergence, however, might depend on it.  
\end{remark}

\begin{proof}[Proof of Proposition~\ref{prop:fext-unbound-continuity}]
  Let~$x\in \gT(B)\setminus\Q$. We wish to show that the limit~\eqref{eq:def-fext-unbound} exists. By Cauchy's criterion, it suffices to show that
  \begin{align}\label{cdl}
    \lim_{\eps\to 0^+} \sup_{\substack{x', x'' \in \Q \cap\gT(B) \\ \abs{x'-x}, \abs{x''-x} \leq \eps}} \abs{f(x') - f(x'')} = 0. 
  \end{align}
  Let $\eps>0$. Since~$x\not\in\Q$, we may find $m\in\N$ such that for any~$y\in\Q\cap[x-\eps, x+\eps]$, then~$r(y)>m$ and~$b_j(y) = b_j(x)$ for all~$j\leq m$. Clearly $m\to\infty$ as $\eps\to0^+.$ In particular, we can assume~$m\geq B^{1/\lambda}$ so that~$\gT(B) \subset \gT(m^\lambda)$. Thus~$x\in\gT(m^\lambda)$ and~$\Q\cap[x-\eps, x+\eps]\cap \gT(B)\seq V(m^\lambda,m,x)$ and~\eqref{cdl} follows from~\eqref{eq:precont-Delta}.  

  It remains to show~\eqref{eq:unicont-fext-unbound}. Let~$x, x' \in \gT(B)$, $m \geq B^{1/\lambda}$, and for~$0\leq j \leq m$, define~$b_j := b_j(x) = b_j(x')$, consider the convergents~$(x_\ell)$, $(x'_\ell)$ of~$x$ and~$x'$ respectively, so that for instance
  $$ x_\ell = [b_0; b_1, \dotsc, b_m, b_{m+1}(x), \dotsc, b_{\ell}(x)], \qquad (m\leq \ell \leq r(x)). $$
  Also, consider the rational~$y := [b_0; b_1, \dotsb, b_m]$. We have~$y\in\gT(m^{\lambda})$ by hypothesis on~$m$. We obviously also have~$x_\ell\in V(m,m^{\lambda},y)$ whenever~$m\leq \ell \leq r(x)$, and similarly~$x'_\ell \in V(m,m^{\lambda},y)$ for~$m\leq \ell \leq r(x')$. We deduce that~$\abs{f(x_\ell) - f(x'_{\ell'})} \leq \Delta_{\lambda}(m)$ with the notation~\eqref{eq:precont-Delta}, and therefore
  $$ \abs{f^*(x) - f^*(x')} \leq \Delta_{\lambda}(m) $$
  by taking~$\ell = r(x)$ if~$x\in\Q$ or~$\ell\to\infty$ if~$x\not\in\Q$, and similarly for~$x'$. The right-hand side does not depend on~$B$ and tends to~$0$ by~\eqref{eq:precont-Delta}, which is what we claimed.
\end{proof}

\subsection{The case of $\Re(k)<0$.}

In this section we prove Theorems~\ref{ds} and \ref{thm:kneg-fdagger-ext}. We start by a lemma regarding the size and regularity of products of consecutive iterates of the Gauss map.

\begin{lemma}\label{lem:reg-bounds-w}
  Let~$j\in\N$, $\lambda\in\R$ and~$g:[0, 1] \to \C$. For~$x\in [0, 1)$, define
  \begin{equation}
    w(x) = w(x; j, \lambda, g) := \1(j\leq r(x)) \Big(\prod_{i=0}^{j-1} T^i(x)\Big)^\lambda g(T^j(x)),\label{eq:expr-w-prod}
  \end{equation}
  where we recall that~$r(x) = +\infty$ by convention if~$x\not\in\Q$ and $\1(j\leq r(x))$ indicates the indicator function of the condition $j\leq r(x)$.
  \begin{itemize}
    \item If~$\lambda>0$ and~$g$ is continuous, then~$w$ is continuous on~$[0, 1)\setminus \{x, r(x)\in\{j-1,j\} \}$. Moreover, letting~$\pm 1 = (-1)^j$, we have
    \begin{align}
      & w(x^\pm) = \den(x)^{-\lambda} g(0) && (r(x) \in \{j-1, j\}), \notag\\
      & w(x^\mp) = \den(x)^{-\lambda} g(1) && (r(x) = j), \label{1cs}\\
      & w(x^\mp) = 0 && (r(x)=j-1).\notag
    \end{align}
    \item If~$\lambda>2$ and~$g$ is of~$\CC^1$ class, then~$w$ is of~$\CC^1$ class on~$[0, 1)\setminus \{x, r(x)\in\{j-1,j\} \}$. When~$r(x)\not\in\{j-1, j\}$, we have
    \begin{equation}
      \begin{aligned}
        w'(x) = \1(j\leq r(x)) \Bigg\{& \lambda g(T^j(x)) \sum_{i=0}^{j-1} (-1)^i \Big(\prod_{0\leq \ell < i} T^\ell(x)\Big)^{\lambda-2} T^i(x)^{\lambda-1} \Big(\prod_{i<\ell<j} T^\ell(x)\Big)^\lambda \\
        {}& \quad + (-1)^j g'(T^j(x)) \Big(\prod_{0\leq \ell < j} T^\ell(x)\Big)^{\lambda-2}\Bigg\}.
      \end{aligned}\label{eq:expr-wprime-prod}
    \end{equation}
    Moreover, we have
    \begin{align}
      & w'(x^\pm) = \den(x)^{-\lambda+2}\Big(\lambda g(0)\sum_{i=0}^{j-1}\frac{(-1)^i}{u_i(x)u_{i+1}(x)} \pm g'(0)\Big) && (r(x) \in \{j-1, j\}), \notag\\
      & w'(x^\mp) = \den(x)^{-\lambda+2} \Big(\lambda g(1)\sum_{i=0}^{j-1}\frac{(-1)^i}{u_i(x)u_{i+1}(x)} \pm g'(1)\Big) && (r(x) = j), \label{fwp}\\
      & w'(x^\mp) = 0 && (r(x) = j-1).\notag
    \end{align}
    \item If~$\lambda>0$ and~$g\in\CC^0$, we have
    \begin{equation}
      \|w\|_\infty \leq 2^{\lambda(1 - j/2)} \|g\|_\infty,\label{eq:bound-w}
    \end{equation}
    while if~$\lambda>2$ and~$g\in\CC^1$, we have
    \begin{equation}
      \|w'\|_\infty \leq j 2^{(\lambda-2)(1-j/2)}(\|g\|_\infty + \|g'\|_\infty).\label{eq:bound-wprime}
    \end{equation}
  \end{itemize}
\end{lemma}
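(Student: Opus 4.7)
The plan is to reduce everything to the regularity and iteration properties of the Gauss map $T$. On each open interval $(1/(n+1),1/n)$, $T$ is real-analytic with $T'(x)=-1/x^2$, and its iterate $T^i$ is continuous (and smooth) at exactly those $x$ for which $T^\ell(x)\neq 0$ for $0\leq\ell<i$, i.e.\ at $x$ with $r(x)\geq i$. First I would use this to obtain continuity of $w$ at every $x$ with $r(x)\notin\{j-1,j\}$: for $r(x)>j$ this is direct from the continuity of $T^0,\dots,T^j$; for $r(x)\leq j-2$ the indicator vanishes so $w(x)=0$, and for any sequence $y\to x$ some $T^\ell(y)$ with $\ell\in\{r(x),r(x)+1\}\subseteq\{0,\dots,j-2\}$ must tend to $0$, which since $\lambda>0$ forces the product in~\eqref{eq:expr-w-prod} (and hence $w(y)$) to tend to $0$.

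For the one-sided limits~\eqref{1cs}, I would exploit the fact that every rational $x$ has exactly two CF representations, of lengths $r(x)$ and $r(x)+1$, and that rationals $y$ in a one-sided neighborhood of $x$ are obtained by extending exactly one of these with a large trailing coefficient. The correspondence between side and representation is governed by the parity rule that appending one more partial quotient to a length-$\ell$ CF increases the value when $\ell$ is even and decreases it when $\ell$ is odd; this accounts for the $\pm=(-1)^j$ convention in the statement. In each of the four cases $(r(x),\text{side})\in\{j-1,j\}\times\{+,-\}$, I would write $y$ explicitly as $[0;b_1,\dots,b_\ell,c_{\ell+1},\dots]$ with $c_{\ell+1}$ large, compute $T^i(y)=[0;c_{i+1},c_{i+2},\dots]$, and let the trailing coefficient tend to $\infty$. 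The product $\prod_{i<j}T^i(y)=u_j(y)/u_0(y)$ then tends either to $1/\den(x)$ (when $T^{j-1}(y)$ has a nonzero limit) or to $0$ (when $T^{\ell}(y)\to 0$ for some $\ell<j-1$), while $T^j(y)$ tends to either $0$ or $1$ according to which representation is being extended, giving the three cases of~\eqref{1cs}.

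For the $\mathcal C^1$ statement with $\lambda>2$, I would differentiate $w$ directly on the open set $\{r(x)>j\}$ using $(T^i)'(x)=(-1)^i\prod_{\ell<i}T^\ell(x)^{-2}$ (by induction from $T'(x)=-1/x^2$) combined with the Leibniz rule applied to $\prod_{i<j}T^i(x)^\lambda\cdot g(T^j(x))$; on collecting terms the factor $T^\ell(x)^\lambda$ combines with a chain-rule factor $T^\ell(x)^{-2}$ to produce $T^\ell(x)^{\lambda-2}$, which is bounded by $1$ precisely because $\lambda>2$. This yields~\eqref{eq:expr-wprime-prod} directly. The boundary limits~\eqref{fwp} then follow from the same CF-extension analysis as in the continuity case: the identity $T^i(x)=u_{i+1}(x)/u_i(x)$ together with $u_j(x)=1$ at $r(x)=j$ collapses the $i$-th summand in~\eqref{eq:expr-wprime-prod} to $(-1)^i\den(x)^{2-\lambda}/(u_iu_{i+1})$, while the $g'$-term contributes the $\pm g'(0)$ or $\pm g'(1)$ piece with the same sign bookkeeping as before. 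Finally, the sup-norm bounds follow from~\eqref{eq:size-T-consec}: pairing consecutive factors gives $\prod_{\ell=0}^{j-1}T^\ell(x)\leq 2^{-\lfloor j/2\rfloor}\leq 2^{1-j/2}$, which immediately yields~\eqref{eq:bound-w}, and each of the $j$ summands in~\eqref{eq:expr-wprime-prod} is then controlled by $2^{(\lambda-2)(1-j/2)}(\|g\|_\infty+\|g'\|_\infty)$, giving~\eqref{eq:bound-wprime}. The main subtlety throughout is the bookkeeping with the two CF representations and the parity of $j$ governing which side matches which boundary value; once this convention is fixed, the four cases of each statement become parallel routine computations.
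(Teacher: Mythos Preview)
Your plan is sound and uses the same ingredients as the paper: the chain-rule formula for $(T^i)'$, the one-sided behavior of $T^j$ at rationals, and the pairing bound~\eqref{eq:size-T-consec}. The main organizational difference is that the paper proves the regularity claim by induction on~$j$, via the recursion $w(x;j,\lambda,g)=w\big(x;1,\lambda,[y\mapsto w(y;j-1,\lambda,g)]\big)$, whereas you argue directly by cases on $r(x)$; both routes work.

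There is one point your outline does not cover: the $\mathcal{C}^1$ regularity of $w$ at points $x$ with $r(x)\leq j-2$. You handle continuity there (``some $T^\ell(y)$ with $\ell\in\{r(x),r(x)+1\}$ tends to~$0$''), but that alone does not give differentiability; you need $w(y)=o(|y-x|)$. The fix is to observe that the vanishing factor actually satisfies $T^\ell(y)=O(|y-x|)$: from one side $T^{r(x)}(y)=\bigl|1/T^{r(x)-1}(y)-b_{r(x)}\bigr|$ with $T^{r(x)-1}$ smooth at $x$, and from the other $T^{r(x)}(y)=1-O(|y-x|)$ whence $T^{r(x)+1}(y)=1/T^{r(x)}(y)-1=O(|y-x|)$. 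Since this factor enters $w(y)$ with exponent $\lambda>2$, and enters every term of~\eqref{eq:expr-wprime-prod} with exponent at least $\lambda-2>0$, one gets $w(y)=O(|y-x|^\lambda)=o(|y-x|)$ and $w'(y)\to 0$, which is precisely what the paper establishes. (Also, a minor indexing slip: when $r(x)=j-2$ one has $r(x)+1=j-1$, so the inclusion should read $\{r(x),r(x)+1\}\subseteq\{0,\dots,j-1\}$; this is harmless for your continuity argument since $T^{j-1}$ is still a factor of the product.)
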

\begin{proof}
  \begin{itemize}
    \item \textit{Value of~$w'$.} For the convenience of our argument, we start by noting that if~$g\in \CC^1$ and~$\lambda>2$, then whenever~$r(x)>j$, the right-hand side of~\eqref{eq:expr-w-prod} defines a~$\CC^1$ function in a neighborhood of~$x$. When~$m\geq1$, its derivative can be computed using the expression
    \begin{align*}
      (T^i)'(x) = (-1)^i \Big(\prod_{0\leq \ell < i}T^\ell(x)\Big)^{-2}, 
    \end{align*}
    and this leads to the given formula for~$w'(x)$.

    \item \textit{Regularity.} We treat the continuity and differentiability claims simultaneously, proving the following more general assertion: given~$m\in\{0, 1\}$,~$j\geq 1$ and~$\lambda>2m$, if there is a countable set~$S$ for which~$g$ is~$\CC^m$ on~$[0, 1]\setminus S$, then~$x\mapsto w(j, \lambda, g)$ is $\CC^m$ on the set
    $$ [0, 1] \setminus \{x: r(x)\in\{j-1, j\} \text{ or } (r(x)>j \text{ and } T^j(x) \in S)\}. $$
    We proceed by induction. Assume first~$j=1$. We have~$r(x)\geq 1 \iff x\neq 0$, so that
    $$ w(x; 1, \lambda, g) = \1(x\neq 0) x^\lambda g(T(x)). $$
    This function is easily seen to be $\CC^m$ at~$x\in[0, 1]$ whenever~$x>0$, $T(x) \in (0, 1)$ and~$T(x) \not \in S$. These conditions amount to~$r(x) > 1$ and~$T(x) \not\in S$, which gives our claim in the case~$j=1$.

    Let~$j>1$, and suppose our claim is proven for~$j-1$. Then we note that
    $$ w(x; j, \lambda, g) = w\big(x; 1, \lambda, [x\mapsto w(x; j-1, \lambda, g)]\big). $$
    By induction, the function~$x\mapsto w(x; j-1, \lambda, g)$ is $\CC^m$ on~$[0, 1]\setminus S'$, where
    $$ S' = \{x, r(x) \in \{j-2, j-1\}\text{ or } (r(x)>j-1 \text{ and } T^{j-1}(x) \in S)\}. $$
    The set~$S'$ is again countable, since~$T^{j-1}$ has countably many inverse branches. From the case~$j=1$ of our claim, we deduce that~$x\mapsto w(x; j, \lambda, g)$ is $\CC^m$ on~$[0, 1]\setminus S''$, where
    \begin{align*}
      S'' ={}& \{x, r(x) \in \{0, 1\} \text{ or } (r(x)>1\text{ and } T(x) \in S'\} \\
      = {}& \{x, r(x) \in \{0, 1, j-1, j\} \text{ or } (r(x)>j\text{ and } T^j(x) \in S)\}
    \end{align*}
    by definition of~$S'$. There remains to check that~$w$ is~$\CC^m$ at~$x$ if~$r(x) = 0$ and~$j\geq 2$, or if~$r(x) = 1$ and~$j\geq 3$. But supposing~$r(x) = 1$ we have for~$\eps\to 0^+$
    $$ T(x-\eps) = O(\eps), \qquad T(x+\eps) = 1-O(\eps)\ \Rightarrow\ T^2(x+\eps) = O(\eps), $$
    and therefore, we have in any case~$\lim_{y\to x} T(y) T^2(y) = O(\abs{x-y})$.
    Thus if~$j\geq 3$ and~$\lambda>2m$, we then have as~$y\to x$,
    $$ w(y; j, \lambda, g) = O((T(y) T^2(y))^\lambda) = o(\abs{x-y}^m), $$
    and so~$w$ is continuous at~$x$, and differentiable there when~$m=1$, with derivative~$0$. Moreover, if~$m=1$, by the explicit expression~\eqref{eq:expr-wprime-prod} and the bounds above, we find
    $$ w'(y) = O(\abs{x-y}^{\lambda-2}) = o(1), $$
    and therefore~$w$ is~$\CC^m$ at~$x$.
    A similar argument holds when~$r(x) = 0$, and concludes the proof of our claim.

    \item \textit{Limits of~$w$.} Assume~$r(x) = j$. Note that for~$y\neq x$ in the neighborhood of~$x$, we have~$r(y)>r(x)$, and moreover
    $$ \lim_{y\to x^\pm} T^j(y) = 0, \qquad \lim_{y\to x^\mp} T^j(y) = 1. $$
    The expression given for~$w(x^\pm)$ and~$w(x^\mp)$ then follows upon taking the limit in the expression~\eqref{eq:expr-w-prod}. The case when~$r(x)=j-1$ is similar, using instead the limits
    $$ \lim_{y\to x^\pm} T^j(y) = \lim_{y\to x^\mp} T^{j-1}(y) = 0. $$

    \item \textit{Limits of~$w'$.} The expressions given for~$w'(x^\pm)$ when~$r(x) \in\{j-1, j\}$ follow from an argument identical to the computation of the limits~$w(x^\pm)$. In the claimed formulae, we have used the notation~\eqref{eq:def-uj} (for~$x$ rational).

    \item \textit{Bounds.} The bounds~\eqref{eq:bound-w}, \eqref{eq:bound-wprime} follow immediately from the explicit expressions above, along with the bound~\eqref{eq:size-T-consec}.
  \end{itemize}
\end{proof}

\subsubsection{Proof of Theorem~\ref{ds}. The domain of $\fem$}\label{dmf}

In this section we show that  $\fem(x)$ is defined for all $x\in\R$. By periodicity, we can assume $x\in (0,1)\setminus\Q$. We also recall that $h$ is bounded on $[-1,1]$ by hypothesis.
We let
\begin{align} \label{dwj}
  h_j(y) := h((-1)^j y), \qquad w_j(y) := w(y, j, -k, h_j),
\end{align}
for $y>0$, and extend $h_j$ at $y=0$ by continuity, so that in particular 
\begin{equation}\label{wr}
  w_{r(x')}(x')=\den(x') ^k h(0^\pm),\qquad (x'\in\Q,\ \pm = (-1)^{r(x')}).
\end{equation}
Thus, with this notation,~\eqref{ffff} reads
\begin{equation}
  f(x') = \sum_{j\geq 0} w_j(x') + \den(x') ^k(f(0)-h(0^\pm)).
   \label{eq:expr-f-series-rat}
\end{equation}
It suffices to show that $\sup |f(x')-f(x'')|\to0$ as $\eps\to0$, where the sup is over all $x',x''\in\Q$ with $|x-x'|,|x-x''|<\eps$. Let~$\eta>0$ be arbitrary. Since~$x\not\in\Q$, there exists~$m(x, \eps) \to \infty$ as~$\eps \to 0$ such that for all~$x^* \in \Q$, $\abs{x-x^*}\leq \eps$, we have~$r(x^*)>m(x, \eps)$ and~$b_j(x^*) = b_j(x)$ for~$j\leq m(x, \eps)$. By~\eqref{eq:bound-w} for~$\eps$ sufficiently small 
we have
$$ \abs{f(x') - f(x'')} \leq \eta + \sum_{j=1}^{m(x, \eps)-1} \abs{w_j(x') - w_j(x'')}, \qquad \forall x',x''\in (x-\eps,x+\eps)\cap \Q.$$
By Lemma~\ref{lem:reg-bounds-w}, the function~$y \to w_j(y)$ is continuous at $x$ and thus for $x',x''\in (x-\eps',x+\eps')\cap \Q$ with $\eps'\in(0,\eps)$ sufficiently small we have $\abs{f(x') - f(x'')} \leq2\eta$. Since~$\eta$ was arbitrary, this yields our claim.

In particular, approximating~$x = [0; b_1, b_2, \dotsc] \in(0, 1)\setminus\Q$ by the sequence~$x_n = [0; b_1, \dotsc, b_n]$, we get the normally converging series expression
\begin{equation}
  \fem(x) =\sum_{j\geq0} w_j(x)  
   \label{eq:expr-fdagger-irrat}
\end{equation}
for $x\notin\Q$. Notice that the same expression holds also for $x\in\Q$ if $f(0)=h(0^\pm)$, by~\eqref{eq:expr-f-series-rat} .

\subsubsection{Proof of Theorem~\ref{ds}. The continuity of $\fem$}\label{adad}

First, assume $x\in(0,1)\setminus\Q$. Then, using the expression~\eqref{eq:expr-fdagger-irrat} and an identical argument as above, the continuity of~$\fem$ at~$x$ follows immediately.

Assume, then, that~$x\in[0, 1]\cap\Q$. Let~$r = r(x)$ and~$q=\den(x)$.
For all~$y\neq x$ in a neighborhood of~$x$, we have~$r(y)\geq r + 1$, and therefore, with~$\pm = (-1)^{r}$,
\begin{align*}
  \fem(y) - f(x) = {}& q^k(h(0^\pm)-f(0)) + O(\den(y)^{\Re(k)}) + \sum_{j\geq 0}(w_j(y) - w_j(x))
\end{align*}
where the term~$O(\den(y)^k)$ is to be ignored if~$y\not\in\Q$. We let~$y\to x$.  
First note that~$\den(y)^k \to 0$. Also, for~$j\not\in\{r, r+1\}$, the function~$w_j$ is continuous by Lemma~\ref{lem:reg-bounds-w}, and so the~$j$-th summand in the sum tends to~$0$.
By dominated convergence, we deduce
\begin{align*}
  \fem(y) - f(x) & = {} o(1) + q^k(h(0^\pm) - f(0)) + w_r(y) - w_r(x) + w_{r+1}(y)\\
  & = {} o(1) + w_r(y)  + w_{r+1}(y)-  q^k f(0),
\end{align*}
by~\eqref{wr}.
Finally, by~\eqref{1cs} and since~$h_r(1) = h((-1)^r) = 0$ by~\eqref{eq:series-ffff-extended}, we have
\begin{equation}\label{wdis}
  \begin{split}
    w_r(y) = {}& o(1) + q^k \begin{cases} h_r(0), & (\sgn(y-x) = (-1)^r), \\ 0, & (\sgn(y-x) = (-1)^{r+1}), \end{cases} \\
    w_{r+1}(y) = {}& o(1) + q^k \begin{cases} 0, & (\sgn(y-x) = (-1)^r), \\ h_{r+1}(0), & (\sgn(y-x) = (-1)^{r+1}). \end{cases} 
  \end{split}  \end{equation}
Since~$h_r(0) = h(0^\pm)$,~$h_{r+1}(0) = h(0^\mp)$, then splitting in cases depending on the sign $\pm$ and on whether $y\to0$ from the right or the left, we find
$$ \fem(y) - f(x) \to \begin{cases} q^k(h(0^+) - f(0)), & (y\to x^+), \\ q^k(h(0^-) - f(0)), & (y \to x^-) \end{cases} $$
as claimed. We also notice for later use that if $h$ is continuous at zero then~\eqref{wdis} gives
\begin{equation*}
  \lim_{y\to x^+}  w_r(y)=\lim_{y\to x^-}  w_{r+1}(y),\quad\lim_{y\to x^-}  w_r(y)=\lim_{y\to x^+}  w_{r+1}(y).
\end{equation*}
\subsubsection{Proof of Theorem~\ref{ds}. The differentiability of $\fem$}

We argue by induction, starting from the case~$m=1$. Assume that~$h\in\CC^1([-1, 1])$ and~$\Re(k)<-2$. 

By Lemma~\ref{lem:reg-bounds-w} we have that $w_j$ is of~$\CC^1$ class on~$[0, 1)\setminus \{x, r(x)\in\{j-1,j\} \}$ (and thus on any neigborhood of irrational numbers). In particular, by~\eqref{eq:expr-fdagger-irrat},~\eqref{eq:bound-w} and~\eqref{eq:bound-wprime} it follows that $\fem$ is differentiable on $\R\setminus\Q$. The same argument also gives that $\sum_{j\neq r,r+1} w_j(x)$ is differentiable at rationals. In particular, it suffices to show that the right and left derivatives of $w_r(y)+w_{r+1}(y)$ coincide at rationals. To show this we start by observing that from the period relation~\eqref{mcg}, proceeding as in~\cite[eq.~(23)]{Zagier1999}, we have
$$ h(x+1) - h(x)  =-\abs{1+x}^{-k}h(1-1/(x+1)) ,\qquad x\in\Q\setminus\{0,-1\}. $$
In particular, taking the limit as~$x\to 0$ we obtain $h(1)=0$ and $h'( 1) =  k h(0)$. Thus, since by~\eqref{mcg} we have $h(-1/x)=-|x|^{-k}h(x)$, we find $h'(- 1)=-kh(0)$ and thus $h_j'(1)=kh(0)$ for all $j$. Thus, recalling that $h_j(1)=0$ (and $h_j'(0)=(-1)^j h(0)$) and observing that
$$ -k\sum_{i=0}^{r(x)} \frac{(-1)^i}{u_i(x)u_{i+1}(x)} = -k\sum_{i=0}^{r(x)-1} \frac{(-1)^i}{u_i(x)u_{i+1}(x)} - (-1)^r k,\qquad x\in\Q\cap(0,1), $$
we obtain from~\eqref{fwp} that
\begin{equation*}
  w'_r(x^+)+w'_{r+1}(x^+)=w'_r(x^-) +w'_{r+1}(x^-), \qquad x\in\Q\cap(0,1),
\end{equation*}
thus giving that~$\fem$ is differentiable at any~$x\in\Q\cap(0,1)$. Finally, by~\eqref{mcg}, we see that $\fem$ is differentiable at~$0$ also, with~$(\fem)'(0) = h'(0)$. We conclude that~$\fem$ is differentiable on~$[0, 1)$, hence on $\R$ by periodicity.
By~\eqref{mcg}, its derivative satisfies, for~$x\neq 0$,
$$ h_1(x) = (\fem)'(x) - \abs{x}^{-k-2}(\fem)'(-1/x), $$
where~$h_1(x) := h'(x) - k\sgn(x) \abs{x}^{-k-1}f(-1/x)$. The function~$h_1$, extended at~$0$ by~$h_1(0)=h'(0)=(\fem)'(0)$, is continuous on~$[-1, 1]$. We can then apply the continuity property proven in Section~\ref{adad} to~$(\fem)'$ and deduce that~$(\fem)'$ is continuous on~$[0, 1]$, and hence on~$\R$.

If~$k<-2m$ and~$h\in\CC^m$, then this argument can be iterated~$m$ times, with functions~$(h_n)_{n\leq m}$ defined inductively by
$$ h_0(x) = h(x), \qquad h_{n+1}(x) = h_n'(x) - (k+2n) \sgn(x) \abs{x}^{-k-2n-1} (\fem)^{(n)}(-1/x). $$

\subsubsection{Proof of Theorem~\ref{thm:kneg-fdagger-ext}. Extending $f$ when $h$ is unbounded.}\label{exsc}

We now justify Theorem~\ref{thm:kneg-fdagger-ext}. We assume that~$h$ satisfies~\eqref{coh}. First note that due to the fact that periodicity is assumed separately on~$\R_+$ and~$\R_-$, we have the following variant of~\eqref{ffff},
$$ f(x) = \sum_{j=0}^{r-1} \Big(\prod_{i=0}^{j-1} T^i(x)\Big)^{-k} h((-1)^j T^j(x)) + q^k f((-1)^r). $$

\begin{lemma}\label{lem:fem-cS}
  Let~$\delta>0$ be such that the hypotheses~\eqref{coh} hold. Then~$f$ has the property~$\cS(1+\delta)$.
\end{lemma}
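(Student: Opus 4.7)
The plan is to bound $|f(x') - f(x'')|$ uniformly over $x \in \gT(m^{1+\delta})$ and $x', x'' \in V(m^{1+\delta}, m, x)$, via the explicit expansion (under~\eqref{eq:f-weak-per})
$$ f(y) = \sum_{j=0}^{r(y)-1} w_j(y) + u_0(y)^{k} f((-1)^{r(y)}), \qquad w_j(y) := \bigl(u_j(y)/u_0(y)\bigr)^{-k} h((-1)^j T^j(y)). $$
Write $\kappa := -k$, so $\Re(\kappa) > 0$. The boundary terms satisfy $|u_0(y)^{k} f((-1)^{r(y)})|\ll u_0(y)^{-\Re\kappa} \ll 2^{-m\Re\kappa/2} = o(1)$, using~\eqref{eq:bounds-uj} and $r(y) \geq m$. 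I would split the main sum (for each of $x', x''$) at the threshold $j = m - M$ with $M := \lceil m^{1-\delta^2/2}\rceil$.

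For the tail $j \geq m - M$, using $|h(t)| \ll \exp(|t|^{\delta-1})$ from~\eqref{coh} and $T^j(y) \geq 1/(b_{j+1}(y)+1) \geq 1/(\max(m^{1+\delta}, (j+1)\log^2(j+1))+1)$, each term satisfies
$$ |w_j(y)| \ll 2^{-j\Re\kappa/2} \exp\bigl(C m^{1-\delta^2} + C j^{1-\delta}(\log j)^{2-2\delta}\bigr). $$
The decisive arithmetic is $\lambda(1-\delta) = 1-\delta^2 < 1 - \delta^2/2$, so summing over $j \geq m - M$ yields a total $\ll \exp\bigl(-(m-M)\Re\kappa(\log 2)/2 + Cm^{1-\delta^2}\bigr) = o(1)$.

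For the head $j < m - M$, the partial quotients of $x'$ and $x''$ coincide through index $m$, so $T^j(x')$ and $T^j(x'')$ share their first $m-j$ CF coefficients, giving $|T^j(x') - T^j(x'')| \ll 2^{-(m-j)}$ by~\eqref{eq:growth-vj}. Choosing $\eps = \eps_{j,m}$ with $\eps^{\delta-1} = \tfrac12 (m-j)\log 2$ (so $\eps \to 0$ as $m \to \infty$ uniformly in $j$), the constraint $\eps < T^j(x')/2$ of~\eqref{coh} reduces to $m - j \gtrsim m^{1-\delta^2}$, which holds for $j < m - M$ once $m$ is large. Hypothesis~\eqref{coh} then gives $|h(\pm T^j(x')) - h(\pm T^j(x''))| = o_m(1)$ uniformly in $j$, and multiplying by the summable weight $(u_j/u_0)^{\Re\kappa} \ll 2^{-j\Re\kappa/2}$ controls the $h$-difference contribution. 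The parallel contribution from $(u_j(x')/u_0(x'))^\kappa - (u_j(x'')/u_0(x''))^\kappa$ is handled by telescoping on the ratios $T^i(x')/T^i(x'')$ for $i < j$, using $|T^i(x') - T^i(x'')| \ll 2^{-(m-i)}$ and $T^i(y) \geq m^{-(1+\delta)}/2$; the resulting relative error $\ll m^{1+\delta}\, 2^{-(M-j)}$ is so small that it beats the factor $\exp(Cm^{1-\delta^2})$ from $|h(\pm T^j(x''))|$, yielding a total $o(1)$ contribution.

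\emph{The main obstacle} is calibrating $M$: it must be large enough that $(m-j) \gtrsim m^{1-\delta^2}$ uniformly for $j < m - M$ (so~\eqref{coh} can be invoked with $\eps < T^j(x')/2$), yet small enough that the $M$ terms in the transition range $[m-M, m)$ still contribute $o(1)$. The choice $\lambda = 1 + \delta$ in the hypothesis is precisely what makes both requirements compatible, via the identity $\lambda(1-\delta) = 1-\delta^2 < 1$.
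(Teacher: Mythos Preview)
Your argument is correct and follows essentially the same route as the paper's proof: expand $f$ via the iterated reciprocity formula, truncate, bound the tail using the growth hypothesis on $h$, and control the head by splitting $w_j(x')-w_j(x'')$ into an $h$-difference (handled by the first part of~\eqref{coh}) and a $\Pi_j$-difference (handled by telescoping). The paper makes slightly simpler choices---it truncates at $j=m/2$ rather than $m-M$, applies~\eqref{coh} with a single fixed $\eps\asymp m^{-1-\delta}$ rather than a $j$-dependent one, and telescopes $\Pi_j(x')-\Pi_j(x'')$ additively rather than multiplicatively---but these are cosmetic differences; note also that your ``$2^{-(M-j)}$'' should read $2^{-(m-j)}$.
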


\begin{proof}
  Let~$x = [b_0; b_1, b_2, \dotsc] \in \gT(m^{1+\delta})$. By periodicity we can assume $x\in[0,1)$, that is $b_0=0$.
  If~$U:[0,1)\to\R$ is the inverse branch of~$T^m$, given by~$U(y) = [0; b_1, \dotsc, b_m+ y]$, then any~$x'\in V(m^{1+\delta}, m, x)$ is in the range of~$U$. Since~$T$ is expanding (\cite[eq.~(2.4)]{BaladiVallee2005} with~$\rho< 2^{-1/2}$), we have~$\|(T^j\circ U)'\|_\infty \ll 2^{-(m-j)/2}$ uniformly for~$0\leq j < m$, and therefore, for any~$x', x'' \in V(m^{1+\delta}, m, x)$,
  \begin{equation}
    \label{eq:bound-diff-Ti}
    \abs{T^j(x') - T^j(x'')} \ll 2^{-\frac12(m-j)}
  \end{equation}
  for~$j<m$. We note also that~$T^j(x') \asymp T^j(x)$ for~$x' \in V(m^{1+\delta}, m, x)$ and~$j<m$, with a uniform constant, and that the condition~$T^j(x') \gg \min(m^{-1-\delta}, j^{-1} (\log j)^{-2})$ holds uniformly over~$j$. Finally, for any~$x'\in V(m^{1+\delta}, m, x)$, the condition~$r(x')\geq m$ implies~$\den(x') \geq 2^{m/2}$ by virtue of~\eqref{eq:bounds-uj} (with~$j=r(x')$).\

  Given~$x\in \gT(m^{1+\delta})$ and~$x'\in V(m^{1+\delta}, m, x)$, with the same notation as in Section~\ref{dmf} we have
  \begin{equation*}
    f(x') = \sum_{0\leq j < m/2} w_j(x') + O\Big(\den(x)^{\Re(k)} + \sum_{j\geq m/2} 2^{\Re(k)j/2} \sup_{\abs{y}\gg \min(m^{-1-\delta}, j^{-1}(\log j)^{-2})} \abs{h(y)}\Big),
  \end{equation*}
  By our hypothesis~\eqref{coh}, the second sum is~$\ll \sum_{j\geq m/2} 2^{\Re(k) j /2}(\e^{j^{1-\delta}(\log j)^2} + \e^{O(m^{1-\delta^2})}) \ll 2^{-m/5}$, and therefore
  \begin{equation}
    f(x') = \sum_{0\leq j < m/2} w_j(x') + O\Big(2^{-m/5}\Big). \label{eq:approx-fem-hunbounded}
  \end{equation}

  We now let~$x', x'' \in V(m^{1+\delta}, m, x)$ be given. Taking differences in the expansion~\eqref{eq:approx-fem-hunbounded}, we obtain
  $$
  \abs{f(x') - f(x'')} \ll \sum_{0\leq j < m/2} \abs{w_j(x') - w_j(x'')} + 2^{-m/5}. $$
  Denote temporarily
  $ \Pi_j(y) := \prod_{0\leq i < j} T^i(y), $
  which is also~$u_j(y) / u_0(y)$ in the notations of Section~\ref{sec:notations}.
  By the definition~\eqref{eq:expr-w-prod},~\eqref{dwj} of $w_j$, splitting the difference in~$w_j(x') - w_j(x'')$ we get
  \begin{align*}
    \sum_{0\leq j < m/2} \abs{w_j(x') - w_j(x'')} &{} \leq \sum_{0\leq j < m/2} \bigg(\abs{(h_j(T^j(x')) - h_j(T^j(x''))) \Pi_j(x'')^{-k}} \\
    &{} \hspace{7em} + \abs{h_j(T^j(x'))(\Pi_j(x')^{-k} - \Pi_j(x'')^{-k}\Big)} \bigg).
  \end{align*}
  Regarding the first term inside the sum, the bound~\eqref{eq:bound-diff-Ti} and our hypothesis~\eqref{coh} applied with some value of~$\eps \asymp m^{-1-\delta}$ give~$\abs{h_j(T^j(x')) - h_j(T^j(x''))} = o(1)$ as~$m\to \infty$ uniformly for all~$j<m/2$. By the bound~\eqref{eq:bounds-uj}, we further have~$\Pi_j(y) \ll 2^{-j/2}$ uniformly for~$y\in[0, 1]$ and~$j\geq 0$, and therefore
  $$  \sum_{0\leq j < m/2} \abs{(h_j(T^j(x')) - h_j(T^j(x''))) \Pi_j(x'')^{-k}} \ll o(1) \times \sum_{j\geq 0} 2^{j \Re(k)/2} = o(1) $$
  as~$m \to \infty$.
  Regarding the second term, we have as above~$h_j(T^j(x')) \ll \e^{m^{1-\delta^2}} + \e^{j^{1-\delta} (\log j)^2}$ for all~$j\geq 0$. We use the inequality~$u^{-k}-v^{-k} \ll_k \abs{u-v}^{\min(1, -k)}$ valid for~$0\leq u, v \leq 1$, which gives in our case
  $$ \Pi_j(x')^{-k} - \Pi_j(x'')^{-k} \ll \abs{\Pi_j(x') - \Pi_j(x'')}^{\min(-k, 1)}. $$
  Splitting the difference, we have
  \begin{align*}
    \abs{\Pi_j(x') - \Pi_j(x'')} \leq {}& \sum_{0\leq i \leq j} \Pi_{i-1}(x') \Pi_{j-i-1}(T^{i+1}(x'')) \abs{T^i(x') - T^i(x'')} \\
    \ll {}& \sum_{0\leq i \leq j} 2^{-i/2} \times 2^{-(j-i)/2} \times 2^{-(m - i)/2} \\
    \ll {}& 2^{-j/3} 2^{-m/4}
  \end{align*}
  using our bound~\eqref{eq:bound-diff-Ti} and thus we conclude that
  \begin{align*}
    \sum_{0\leq j < m/2} \abs{h_j(T^j(x'))(\Pi_j(x')^{-k} - \Pi_j(x'')^{-k})} \ll {}& 2^{-m \min(-k, 1) /4} \sum_{j\geq 0} \frac{ \e^{O(m^{1-\delta^2})} + \e^{O(j^{1-\delta} (\log j)^2)}}{2^{j\min(-k, 1)/3}} \\
    \ll{}& 2^{-m \min(-k, 1)/4}
  \end{align*}
  which tends to~$0$ as~$m\to \infty$.
  Grouping our bounds, we deduce~$\Delta_{1+\delta}(m) \to 0$ when~$m\to\infty$, as claimed.
\end{proof}

We now turn to the proof of Theorem~\ref{thm:kneg-fdagger-ext}. Consider~$X = (\R\setminus\Q) \cap \gS$, where~$\gS$ is defined in Lemma~\ref{bll0}, and let~$x\in X$. In particular~$x\in \gT(B)$ for some~$B>0$, and therefore so do the convergents~$x_j$ of~$x$. Summarizing, we have~$x_j\in \Q\cap\gT(B)$ for all~$j$, and~$x_j \to x$ as~$j\to\infty$. Since~$f$ has the property~$\cS(1+\delta)$ by Lemma~\ref{lem:fem-cS}, the  limit~$\fem(x) := f^*(x) = \lim_{j\to\infty} f(x_j)$ in~\eqref{eq:def-fext-unbound} exists.

Moreover, for any~$\eps$, by Lemma~\ref{bll0} we can find~$B>0$ such that~$X_\eps := (\R\setminus\Q) \cap \gT(B) \subset X$ satisfies~$\meas(X_\eps\cap [0, 1])\geq 1-\eps$. This set is also trivially invariant by~$x\mapsto x+1$. Let~$y\in X_\eps$ and~$\eta>0$ be arbitrary. We pick an integer~$m\geq B^{1/(1+\delta)}$ such that the right-hand side of~\eqref{eq:unicont-fext-unbound} has modulus at most~$\delta$. Having chosen~$m$, and since~$y\not\in\Q$, we may find~$\xi>0$ such that any number~$x\in (\R\setminus\Q)\cap[y-\xi, y+\xi]$ satisfies~$b_j(x) = b_j(y)$ for~$j\leq m$. Now let~$x\in X_\eps \cap[y-\xi, y+\xi]$ be arbitrary. Then~$x\in \gT(B)$ by definition, and therefore the formula~\eqref{eq:unicont-fext-unbound} gives~$\abs{\fem(x) - \fem(y)} \leq \eta$. We have thus proven that~$\fem|_{X_\eps}$ is continuous at~$y$ with the restricted topology, as claimed.

\subsection{The case of $\Re(k)>0$.}

\subsubsection{Proofs of Theorems~\ref{ds1} and~\ref{ds1-ext}. The domain and continuity of $\fep$}\label{stds1}

For the purpose of studying the example of the Kontsevich function~\eqref{eq:def-kontsevich}, it will be convenient to generalize the period relations to
\begin{equation}
  \begin{cases}
    f(x+1) = \vartheta f(x), & (x\in \Q), \\
    h(x) = f(x) - \vartheta^{\pm 3} \abs{x}^{-k} f(-1/x), & (\pm x \in \Q_{>0}),
  \end{cases}\label{eq:recip-f-general}
\end{equation}
where~$\vth$ is some fixed root of unity. Define
\begin{align}\label{vtj}
  \vth_j = \vth_j(b_1, b_2, \dotsc) =  \begin{cases} \vth^{\sum_{i=1}^j (-1)^i b_i} & (j\text{ even}), \\ \vth^{3+\sum_{i=1}^j (-1)^i b_i} & (j\text{ odd}). \end{cases} 
\end{align}
where~$b_1, b_2, \dotsc$ are integers. By arguing in the same way as~\eqref{ffff2}, we find for~$x = [0; b_1, \dotsc, b_r]$ that
\begin{equation}
  f(x) = \sum_{j=0}^{r-1} \vth_j\, \Big(\frac{u_j}{u_0}\Big)^{-k} h\Big((-1)^j \frac{u_{j+1}}{u_j}\Big) + \vth_r u_0^k f(0).\label{eq:expansion-f-h-general}
\end{equation}
Note that by the period relation, we have
$$ h((-1)^r) = (\vth^{(-1)^r} - \vth^{3(-1)^r+(-1)^{r+1}}) f(0), $$
which confirms that the expression~\eqref{eq:expansion-f-h-general} holds for~$x = [0; b_1, \dotsc, b_r]$ regardless of whether this expansion is canonical ($b_r>1$ if~$r>1$) or not. Similarly to~\eqref{ffff3}, we may then work with~$r$ odd, for which we observe that
$$ \vth_{r-j}(b_1, \dotsc, b_r)\vth_{j}(b_r, \dotsc, b_1) = \vth_{r}(b_1, \dotsc, b_r). $$
Changing~$j$ to~$r-j$ in the sum~\eqref{eq:expansion-f-h-general}, and using the notation~\eqref{eq:def-vj}, we deduce that for~$x = [0; b_1, \dotsc, b_r]$, $r$ odd, we have
\begin{equation}
  \vth_r^{-1}(x) q^{-k} f(x) = \Psi(\bar{x})\label{eq:rel-f-Psi}
\end{equation}
where now~$\Psi(y)$ is defined for~$y = [0; c_1, \dotsc, c_r]$, $r$ odd, as
\begin{equation}
  \Psi(y) = \sum_{j=1}^r \vth_j^{-1}(y) v_j(y)^{-k} h\Big((-1)^j \frac{v_{j-1}(y)}{v_j(y)}\Big) + f(0),\label{eq:def-psi-general}
\end{equation}
where, whenever the expansion~$x = [0;b_1, \dotsc, b_r]$ is clear from the context, we denote
\begin{equation}
  \vth_j(x) = \vth_j(b_1, \dotsc),\label{eq:def-vthj-numbers}
\end{equation}
but this quantity in fact depends on the tuple~$(b_j)$ representing~$x$.

In the case of $\Re(k)>0$ and $h(x)=O(|x|^{-\Re(k)})$ for $x\in[-1,1]\setminus\{0\}$, the existence of $\fep$ for all $x\in\R$ is straightforward. Indeed, by~\eqref{eq:expansion-f-h-general} we have 
\begin{align}\label{eq:def-Psi-proof-kpos}
  \fep(x)=\begin{cases}
    \Psi( x) & \text{if }x\in\Q, \\
    {\displaystyle \lim_{\Q\ni y\to x}\Psi( y)} & \text{if }x\notin\Q,
  \end{cases}
\end{align}
where the limit exists by virtue of the fact that the sum in~\eqref{eq:expansion-f-h-general} is uniformly convergent and for each~$j$,~$v_j(y)$ depends only on finitely many of the functions~$y \mapsto b_j(y) = \floor{1/T^{j-1}(y)}$ which are locally constant at irrationals. Note that for irrational $x=[0;c_1,c_2,\dots]$ we simply have
\begin{equation}\label{fsdp}
  \fep(x)=\sum_{j=1}^{\infty}\vth_j^{-1}v_{j}^{-k} h\bigg((-1)^{j-1}\frac{v_{j-1}}{v_{j}}\bigg)+f(0).
\end{equation}
The continuity of $\fep$ on $\R\setminus\Q$ when $h(x)=O(|x|^{-\Re(k)})$ is immediate from~\eqref{fsdp}. Suppose $x=[0;b_1,\dots,b_r]\in\Q$ with $r$ odd. If $x'\to x^-$ with $x'$ sufficiently close to $x$, then we have $x'=[0;b_1,\dots,b_r,b',b_{r+2}',\dots]$ with $b'\to\infty$. It follows immediately from the definition that $\fep(x')\to \fep(x)$ if $h(x)=o(|x|^{-\Re(k)})$ as $x\to0$, and so under this assumption~$\fep$ is continuous at~$x$ from the left.
Next, let $x'\to x^+$, then if $b_r>1$ we have $x'=[0;b_1,\dots,b_r-1,1,b',b'_{r+2},\dots]$ with $b'\to\infty$. Note that
$$ \frac{v_{r-1}(x)}{v_{r}(x)} = \overline x, \quad \frac{v_{r-1}(x')}{v_r(x')} = \frac{v_{r-1}(x)}{v_r(x)-v_{r-1}(x)} = \frac{\overline x}{1-\overline x}, \quad \frac{v_{r}(x')}{v_{r+1}(x')}=1-\overline x. $$
Then, under the hypothesis~$h(x)=o(|x|^{-\Re(k)})$, by~\eqref{eq:def-psi-general} and the relations~\eqref{eq:recip-f-general}, we obtain
\begin{align*}
  \fep(x')-\fep(x) = {}& \vartheta_r^{-1} q^{-k} \bigg(\vth^{-1} (1-\bar x)^{-k} h\Big(\frac{\overline x}{1-\overline x}\Big) + \vth h(\bar x-1) - h(\bar x)\bigg) + o(1) 
  \\
  = {}& \vartheta_r^{-1} q^{-k} \bigg( \vth^{-1}(1-\bar x)^{-k}f\Big(\frac{\bar x}{1-\bar x}\Big) + \vth h(\bar x - 1) - f(\bar x) \bigg)+o(1)
  \\
  = {}& o(1)
\end{align*}
as $b'\to\infty$.
If $b_r=1$, then $x'=[0;b_1,\dots,b_{r-1}+1,b',b'_{r+1},\dots]$ with $b'\to\infty$. We then obtain in a similar way
\begin{align*}
  \fep(x')-\fep(x) = {}& \vartheta_r^{-1}q^{-k}\bigg( \vth h(\bar x - 1) - h(\bar x) - \vth^2 (\bar x)^{-k} h\Big(\frac{\bar x - 1}{\bar x}\Big) \bigg) + o(1)   \\
  = {}& o(1)
  \end{align*}
which goes to~$0$ as~$x'\to x^+$, and concludes the proof that~$\fep$ is continuous on~$\R$.
This proves the first part of Theorem~\ref{ds1}.

Assume now that~$f$ is not (twisted) periodic, but instead satisfies $f(x) = \vartheta f(x+1)$ only for $x \in \Q\setminus[-1, 0]$, and that~$h$ satisfies $h(x)=O(\e^{|x|^{-1+\delta}})$ for some $\delta>0$. In this situation, we use Proposition~\ref{prop:fext-unbound-continuity} to extend~$\Psi$.

\begin{lemma}\label{lem:fep-cS}
  Let~$\delta>0$, and assume that~$f$ satisfies
  \begin{equation*}
    \begin{cases}
      f(x+1) = \vartheta f(x), & (x\in \Q\setminus[-1, 0]), \\
      h(x) = f(x) - \vartheta^{\pm 3} \abs{x}^{-k} f(-1/x), & (\pm x \in \Q_{>0}),
    \end{cases}
  \end{equation*}
  and that the map~$h$ defined through~\eqref{mcg} satisfies~$h(x) = O(\e^{|x|^{-1+\delta}})$.
  Then the formulae~\eqref{eq:rel-f-Psi} holds with
  $$ \Psi(y) := \sum_{j=1}^r \vth_j^{-1}(y) v_j(y)^{-k} h\big((-1)^j \frac{v_{j-1}(y)}{v_j(y)}\big) + \vth^{-1} f(-1) \qquad (y=[0;c_1, \dotsc, c_r]; r\text{ odd}), $$
  and the map~$\Psi$ has the property~$\cS(1+\delta)$.
\end{lemma}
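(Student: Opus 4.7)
The plan is to mirror the structure of Lemma~\ref{lem:fem-cS}, adapted to the case $\Re(k)>0$ with weak periodicity. For the formula~\eqref{eq:rel-f-Psi}, I would iterate the reciprocity relation~\eqref{eq:recip-f-general} starting from $x = [0; b_1, \dotsc, b_r] \in (0, 1) \cap \Q$ with~$r$ odd, following the derivation of~\eqref{eq:expansion-f-h-general}: alternate applications of reciprocity (each producing a factor~$\vth^{\pm 3}$) with applications of~$f(y+1) = \vth f(y)$ (each producing a factor~$\vth^{\pm 1}$) to reduce the current argument back into the fundamental domain. A direct check shows that, at the~$j$-th stage, the translation arguments~$-1/T^{j-1}(x) + \ell$ stay strictly outside~$[-1, 0]$ for~$\ell = 0, 1, \dotsc, b_j - 1$, and one can apply the~$b_j - 1$ translations needed to enter~$(-1, 0)$. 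The iteration therefore terminates cleanly at argument~$-1$ rather than continuing to~$0$, yielding a variant of~\eqref{eq:expansion-f-h-general} in which the terminal~$\vth_r(x) u_0^k f(0)$ is replaced by~$\vth^{\epsilon} \vth_r(x) u_0^k f(-1)$ for an explicit power~$\vth^{\epsilon}$. The change of variable~$j \mapsto r - j$, combined with the continuant symmetry~$u_{r-j}(x) = v_j(\bar{x})$ and the multiplicativity~$\vth_r(x) = \vth_{r-j}(x) \vth_j(\bar{x})$, then produces the claimed expression for~$\Psi$ with the constant term~$\vth^{-1} f(-1)$.

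For the property~$\cS(1+\delta)$, let $y \in \gT(m^{1+\delta})$ and $y', y'' \in V(m^{1+\delta}, m, y)$. The~$j$-th summand
$$ T_j(y) := \vth_j^{-1}(y)\, v_j(y)^{-k}\, h\bigl((-1)^j v_{j-1}(y)/v_j(y)\bigr) $$
of~$\Psi$ depends only on~$c_1, \dotsc, c_j$, and these CF coefficients agree for~$y'$ and~$y''$ whenever~$j\leq m$. Hence all summands with~$j\leq m$ coincide and
$$ \Psi(y') - \Psi(y'') = \sum_{j > m} T_j(y') - \sum_{j > m} T_j(y''), $$
so it suffices to bound each tail uniformly as~$m\to\infty$. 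The growth bound~\eqref{eq:growth-vj} gives~$|v_j(y^\star)^{-k}| \ll 2^{-j\Re(k)/2}$, which provides the crucial exponential decay since~$\Re(k)>0$. Using the continuant recursion~$v_j = c_j v_{j-1} + v_{j-2}$ we get~$v_{j-1}(y^\star)/v_j(y^\star) \asymp 1/c_j(y^\star)$, and the hypothesis~$y^\star \in \gT(m^{1+\delta})$ gives~$c_j(y^\star) \leq \max(m^{1+\delta}, j(\log j)^2)$. Combining with~$h(x) \ll \exp(|x|^{-1+\delta})$ yields
$$ |T_j(y^\star)| \ll 2^{-j\Re(k)/2}\, \exp\!\bigl(O\bigl(\max(m^{1-\delta^2},\, j^{1-\delta}(\log j)^{2-2\delta})\bigr)\bigr). $$
Summing over~$j>m$, the geometric decay~$2^{-j\Re(k)/2}$ dominates the subexponential growth from the bound on~$h$, giving a tail of size~$\ll 2^{-m\Re(k)/2} \exp(O(m^{1-\delta^2})) = o(1)$ as $m\to\infty$. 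This gives~$\Delta_{1+\delta}(m)\to 0$ as required.

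The main obstacle will be the bookkeeping in Part 1: tracking the precise accumulation of the twist factors~$\vth^{\pm 3}$ from each reciprocity step and~$\vth^{\pm(b_j-1)}$ from each block of weak-periodicity translations, and verifying that after the change of variable~$j \mapsto r-j$ the terminal constant collapses to exactly~$\vth^{-1} f(-1)$ independently of~$x$. The fully periodic derivation of~\eqref{eq:rel-f-Psi} supplies a direct template, and the only essential novelty is that the iteration now terminates at~$-1$ rather than~$0$, which has the effect of shifting one factor of~$\vth$ between the summation and the constant term.
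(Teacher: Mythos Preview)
Your proposal is correct and follows essentially the same approach as the paper. The paper's proof is in fact terser than yours: for Part~1 it simply asserts that the verification of~\eqref{eq:rel-f-Psi} with~$f(0)$ replaced by~$\vth^{-1}f(-1)$ is ``straightforward'', and for Part~2 it carries out exactly the tail-cancellation argument you describe, with the same bound $|\Psi(x')-\Psi(x'')| \ll \sum_{j>m} 2^{-j\Re(k)/2}\bigl(\e^{O(j^{1-\delta}(\log j)^2)} + \e^{O(m^{1-\delta^2})}\bigr) \ll 2^{-m\Re(k)/3}$.

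One small imprecision in your Part~1 sketch: at intermediate stages~$j<r$ (where~$T^j(x)>0$) you actually need~$b_j$ translations, not~$b_j-1$, to pass from~$(-1)^j/T^{j-1}(x)$ to~$(-1)^j T^j(x)$; the relevant arguments are all~$\leq -1-T^j(x)<-1$ (or~$\geq 1+T^j(x)>1$ on the positive side), so weak periodicity applies throughout. It is only at the terminal step~$j=r$, where~$T^r(x)=0$, that the last translation argument would land exactly at~$-1\in[-1,0]$ and is therefore blocked, leaving you at~$f(-1)$ rather than~$f(0)$. This is a bookkeeping detail and does not affect the validity of your outline.
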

\begin{proof}
  The verification that the equations~\eqref{eq:rel-f-Psi}, \eqref{eq:def-psi-general} hold, with~$f(0)$ replaced with~$\vth^{-1}f(-1)$, is straightforward.
  Let~$B = m^{1+\delta}$, $x\in \gT(B)$ and~$x', x'' \in V(B, m, x)$. Note in particular that~$b_j(x') = b_j(x'')$ for all~$j\leq m$, and therefore also~$v_j(x') = v_j(x'')$ and~$\vth_j(x') = \vth_j(x'')$. We deduce
  $$ \abs{\Psi(x') - \Psi(x'')} \leq \sum_{j=m+1}^{r(x')} v_j(x')^{-\Re(k)} \abs{h\big((-1)^j \frac{v_{j-1}(x')}{v_j(x')}\big)} + \sum_{j=m+1}^{r(x'')} v_j(x'')^{-\Re(k)} \abs{h\big((-1)^j \frac{v_{j-1}(x'')}{v_j(x'')}\big)}. $$
  Since~$x'\in\gT(B)$, we also have~$v_{j-1}(x') / v_j(x') \gg 1/b_j(x') \gg j^{-1}(\log j)^{-2} + B^{-1}$. The same holds for~$x''$. By our hypothesis on~$h$, and combining this with~\eqref{eq:growth-vj}, we deduce
  $$ \abs{\Psi(x') - \Psi(x'')} \ll \sum_{j\geq m+1} 2^{-j \Re(k)/2} \big(\e^{O(j^{1-\delta}(\log j)^2)} + \e^{O(m^{1-\delta^2})}\big) \ll 2^{-m \Re(k)/3}, $$
  which tends to~$0$ as~$m\to \infty$, uniformly in~$x, x', x''$. This shows the property~$\cS(1+\delta)$ for~$\Psi$.
\end{proof}

Letting~$\fep(x) := \Psi^*(x)$ with the notations of Proposition~\ref{prop:fext-unbound-continuity}, the deduction Theorem~\ref{ds1-ext} follows by arguments identical to those of Section~\ref{exsc}, which were used to prove Theorem~\ref{thm:kneg-fdagger-ext}.

\subsubsection{Proof of Theorem~\ref{ds1}. The differentiability of $\fep$.}

We assume here the hypotheses of Theorem~\ref{ds1}, in particular, that~$h(x)=O(|x|^{-\Re(k)})$ for $0<|x|<1$.
Let $X = \gS$ be as in Lemma~\ref{bll0}, let~$\eps>0$ and suppose $x=[0;b_1,b_2,\dots] \in \gS$. Let $m := m(x,\eps)$ be as in Section~\ref{dmf}. Let $x'\in(0,1)$ be such that $|x-x'|<\eps$ so that $x'=[b_1,\dots,b_m,b_{m+1}',\dots]$. Let $n\geq m$ be the least integer such that $b_{n+1}\neq b_{n+1}'$.
We write $x=[0;b_1,\dots,b_n,z]$ and $x'=[0;b_1,\dots,b_n,z']$ with 
$z=[b_{n+1};b_{n+2},\dots]$, $z'=[b'_{n+1};b'_{n+2},\dots]$ (and $b_{n+1}\neq b_{n+1}'$ by hypothesis).
By standard properties of continued fractions we have
\begin{align*}
  |x-x'|=\frac{|z-z'|}{(z v_n+v_{n-1})(z' v_n+v_{n-1})}.
\end{align*}
This is $\gg b_{n+1}^{-2}v_n^{-2} $, unless  $b_{n+1}-b_{n+1}'$ is equal to $1$ or $-1$. In the first case we have
$z-z'=1+\frac1{b_{n+2}+y}-\frac1{b_{n+2}'+y'}\geq b_{n+2}^{-1}$ for some $0\leq y,y'\leq1$.
In the second we have $z'-z=1-\frac1{b_{n+2}+y}+\frac1{b_{n+2}'+y'}\geq y/2\geq -(b_{n+3}+1)^{-1}$.
Since~$x\in \gS$, in all cases we have $\eps>|x-x'|\gg_x v_{n}^{-2} n^{-3}(\log n)^{-6} \gg v_n^{-2} (\log v_n)^{-4}$.

Finally, we have 
\begin{align*}
  \fep(x)-\fep(x')\ll \sum_{j\geq {n+1}}\frac{1}{v_{j-1}^{\Re(k)}}+\sum_{j\geq {n+1}}\frac{1}{v_{j-1}^{\prime\,\Re(k)}}
  \ll v_n^{-\Re(k)} \ll_x \eps^{\Re(k)/2}|\log \eps|^{\Re(k)}
\end{align*}
and so it follows that~$\fep$ is locally~$\alpha$-Hölder continuous at~$x$, for any~$\alpha<\frac12\Re(k)$. This completes the proof of Theorem~\ref{ds1}.

\section{Limiting distributions of quantum modular forms}

\subsection{Convergence in distribution}

\subsubsection{Almost sure stability of the CF expansion}

First we need the following lemmas.
\begin{lemma}\label{flgaq}
  Let $q\geq2$, let
  $$ \gA_q:=\{0\leq a<q\mid (a,q)=1: a/q \in \gT((\log q)(\log\log q)^2)\}. $$
  Then $|\gA_q|=\varphi(q)(1+o(1))$ as $q\to\infty$, where $\varphi$ denotes Euler's totient function.
\end{lemma}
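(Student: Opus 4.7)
The plan is to bound the complement $\varphi(q) - |\gA_q|$ by a union bound over positions $j$ of a large partial quotient. Write $B := (\log q)(\log\log q)^2$ and $M_j := \max(B, j(\log j)^2)$. The Fibonacci bound $v_j(a/q) \geq F_{j+1}$ on convergent denominators forces $r(a/q) \leq C_0 \log q$ for every reduced fraction $a/q$, and therefore
$$
\varphi(q) - |\gA_q| \;\leq\; \sum_{1\leq j \leq C_0 \log q} N_j, \qquad N_j := \#\{1\leq a < q : (a,q)=1,\ b_j(a/q) > M_j\}.
$$

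The central step is the uniform counting estimate $N_j \ll \varphi(q) (\log M_j) / M_j$, with constant independent of $j$ and $q$. For $j=1$ this is straightforward: $b_1(a/q) = \lfloor q/a\rfloor > M$ forces $a \leq q/M$, and M\"obius inversion gives $N_1 = \varphi(q)/M + O(q^\eps)$. For $j \geq 2$ I would partition $\{a : b_j(a/q) > M\}$ according to the first $j-1$ partial quotients $(c_1,\dotsc,c_{j-1})$, or equivalently according to the $(j-1)$-th convergent pair $(p_{j-1}/v_{j-1}, p_{j-2}/v_{j-2})$: for each such tuple, the condition $b_j > M$ places $a/q$ in a subinterval of the corresponding cylinder of length at most $1/(M v_{j-1}(v_{j-1}+v_{j-2}))$, which contains at most $q/(M v_{j-1}^2) + 1$ reduced fractions of denominator~$q$. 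Summing over the admissible tuples (in bijection with coprime pairs $(p,v)$ with $v\leq q$) and separating the ranges $v_{j-1}^2 \leq q/M_j$ and $v_{j-1}^2 > q/M_j$ yields the desired bound, a log factor in $M_j$ arising from the summation over $v$.

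Finally, split the sum at the index $j_0 \asymp \log q$ where $j(\log j)^2 \asymp B$. For $j \leq j_0$ we have $M_j = B$, and the contribution is
$$
\ll j_0 \cdot \frac{\varphi(q) \log B}{B} \ll \varphi(q) \cdot \frac{\log\log q}{(\log\log q)^2} = o(\varphi(q)).
$$
For $j_0 < j \leq C_0 \log q$ we have $M_j = j(\log j)^2$, and the contribution is
$$
\ll \varphi(q) \sum_{j_0 < j \leq C_0 \log q} \frac{\log j}{j (\log j)^2} = \varphi(q) \big(\log\log(C_0\log q) - \log\log j_0\big) = o(\varphi(q)),
$$
since $\log j_0$ and $\log(C_0\log q)$ are both $(1+o(1))\log\log q$. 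Combining the two ranges gives $|\gA_q| = \varphi(q)(1 + o(1))$, as required.

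The main obstacle is the counting estimate on $N_j$. The Lebesgue analog $\meas\{x\in[0,1) : b_j(x) > M\} \ll 1/M$ is essentially immediate from the Gauss-Kuzmin invariance, but transferring this to reduced fractions of fixed denominator $q$ requires handling an ill-controlled union of small cylinders, where naive interval-by-interval counting of Farey points would accumulate error terms of size $\log q$ that could not be absorbed in the subsequent sum over~$j$. The bookkeeping has to be tight enough that the extra factor is at most $\log M_j$, not $\log q$.
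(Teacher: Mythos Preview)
The paper's own proof is a one-line citation of Rukavishnikova's result; your plan is essentially the natural skeleton behind that result, and your final summation over $j$ is carried out correctly. The issue is that the whole weight of the lemma lies in the counting estimate $N_j \ll \varphi(q)(\log M_j)/M_j$, which you state but do not prove, and which your sketched cylinder argument does not deliver.

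Concretely, take the range $v_{j-1}^2 > q/M_j$ in your splitting. There each sub-cylinder has length below $1/q$, so it contributes at most one Farey point of denominator $q$; but the number of depth-$(j-1)$ cylinders with $\sqrt{q/M_j} < v_{j-1} < q/M_j$ is of order $\sum_{v}\varphi(v) \asymp (q/M_j)^2$, and for $M_j = B = (\log q)(\log\log q)^2$ this already dwarfs $q$. So the naive ``$+1$'' errors do not just cost you a factor $\log q$ instead of $\log M_j$ --- they swamp the main term entirely. You are aware of this (your closing paragraph flags exactly this point), but the fix you allude to (``the bookkeeping has to be tight enough'') is not provided, and it is not a matter of bookkeeping: one needs a genuinely different counting that exploits the arithmetic of the Euclidean algorithm on the pair $(a,q)$ rather than treating cylinders as arbitrary intervals. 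That is the content of Rukavishnikova's theorem, and it is why the paper simply cites it rather than reproving it.

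In short: correct architecture, correct endgame, but the central estimate is asserted rather than proved, and the method you sketch for it fails. Either cite the reference as the paper does, or supply a complete argument for the bound on $N_j$ (or for $\sum_j N_j$ directly, which is what is actually needed).
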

\begin{proof}
  This is a special case of~\cite{Rukavishnikova2006}.
\end{proof}
\begin{lemma}\label{flgaq2}
  Let $q\geq2$, let 
  $$ \gT_{q}:= ([0,1)\setminus\Q) \cap \gT((\log q)(\log\log q)^2).$$
  Then $\meas(\gT_q)=1+o(1)$ as $q\to\infty$.
\end{lemma}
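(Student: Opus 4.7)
The plan is to deduce this lemma as an immediate corollary of Lemma~\ref{bll0}, which already asserts that $\meas(\gT(B)\cap[0,1]) = 1 + o(1)$ as $B \to \infty$. Setting $B = B_q := (\log q)(\log\log q)^2$, a quantity that tends to infinity with $q$, the statement should follow by transferring that limit.

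More precisely, I would first unpack the definitions: $\gT_q = (\gT(B_q)\cap[0,1))\setminus\Q$, and since $\Q\cap[0,1)$ is a null set it suffices to prove $\meas(\gT(B_q)\cap[0,1)) = 1+o(1)$. By Lemma~\ref{bll0}, for every $\eps>0$ there exists $B_0=B_0(\eps)$ such that $\meas(\gT(B)\cap[0,1]) \geq 1-\eps$ whenever $B\geq B_0$. Since $B_q \to \infty$, one has $B_q \geq B_0$ for all sufficiently large $q$, hence $\meas(\gT_q)\geq 1-\eps$ eventually; letting $\eps\to 0$ closes the argument.

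There is essentially no obstacle here. The only ingredients are the monotonicity of $B\mapsto \gT(B)$ already used in proving Lemma~\ref{bll0} and the divergence of $B_q$. The choice of the specific rate $B_q=(\log q)(\log\log q)^2$ is dictated by its arithmetic counterpart in Lemma~\ref{flgaq}, where the rate comes from~\cite{Rukavishnikova2006}, but at the level of this purely measure-theoretic statement any divergent sequence would work equally well.
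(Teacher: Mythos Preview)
Your proposal is correct and follows exactly the same approach as the paper, which simply states that the lemma is a consequence of Lemma~\ref{bll0} with $B=(\log q)(\log\log q)^2$. Your additional remarks on removing the null set $\Q$ and on the irrelevance of the specific growth rate of $B_q$ are accurate but not needed for the argument.
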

\begin{proof}
  This is a consequence of Lemma~\ref{bll0} with~$B=(\log q)(\log\log q)^2$.
\end{proof}

We can now deduce the following two lemmas which show that $\fep$ and $\fem$ can be well approximated by their values at suitable closeby rationals. 
We will obtain this, here again, as a consequence of the property~$\cS(\lambda)$ for some~$\lambda>1$.

\begin{lemma}\label{lem:fstar-stab-rational}
  Suppose that there exists~$\lambda>1$ such that~$f:\Q\to\C$ has the property~$\cS(\lambda)$. Then the map~$f^*$ defined in Proposition~\ref{prop:fext-unbound-continuity} satisfies
  \begin{equation}
    \label{eq:fstar-stab-rational}
    \begin{aligned}
      f^*(a/q) &= f^*(x) + o(1) \qquad \forall a\in \gA_q, \quad \forall x\in (\tfrac {a-q^{1/4}}q, \tfrac {a+q^{1/4}}q) \cap (\gT_q \cup \tfrac1q \gA_q).
    \end{aligned}
  \end{equation}
  as~$q\to\infty$, uniformly for~$a$ and~$x$ within the stated sets.
\end{lemma}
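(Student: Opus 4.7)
The strategy is to apply the uniform continuity estimate~\eqref{eq:unicont-fext-unbound} of Proposition~\ref{prop:fext-unbound-continuity} to the pair $(a/q, x)$. For this, I would exhibit an integer~$m = m(q) \to \infty$ as~$q\to\infty$ such that both~$a/q$ and~$x$ lie in~$\gT(m^\lambda)$ with~$r(\cdot) \geq m$, and share their first~$m$ partial quotients. Property~$\cS(\lambda)$ then yields~$|f^*(a/q) - f^*(x)| \leq \Delta_\lambda(m) = o(1)$ uniformly.

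Concretely, take~$m$ to be the largest index for which~$v_m(a/q) \leq q^{3/8}$. Since~$a\in\gA_q$, the partial quotients of~$a/q$ are bounded by~$B_q := (\log q)(\log\log q)^2$, so that $v_{m+1}(a/q) \leq (b_{m+1}(a/q)+1) v_m(a/q) \leq q^{3/8+o(1)}$; combined with the lower bound~$v_j \gg 2^{j/2}$ from~\eqref{eq:growth-vj}, this forces~$m \asymp \log q$. Since~$\lambda>1$, we then have $m^\lambda \geq B_q$ once~$q$ is large. Hence any~$y \in \gT_q \cup \tfrac1q \gA_q$ satisfies $b_j(y) \leq B_q \leq m^\lambda$ for all $j \leq m$, placing $y \in \gT(m^\lambda)$; in particular both $a/q$ and~$x$ belong to $\gT(m^\lambda)$.

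Next, the cylinder $C := \{y\in[0,1) : b_j(y) = b_j(a/q)\text{ for all }j\leq m\}$ is an interval of length $\asymp 1/(v_m(v_m + v_{m-1})) \gg q^{-3/4}$ by the choice of~$m$, with~$a/q$ among its endpoints. Hence any~$y$ with $|y - a/q| < q^{-3/4}$ lies in the interior of~$C \cup C'$ for $C'$ the adjacent cylinder on the relevant side; one either adjusts the size of~$m$ by a bounded amount so that the enlarged cylinder covers the whole ball, or uses the ``odd/even rank'' trick of~\eqref{eq:series-ffff-extended} to match the two representations of~$a/q$. The condition~$r(x) \geq m$ is automatic when $x \in \gT_q\subset\R\setminus\Q$; when $x = a'/q \in \tfrac1q \gA_q$, the relations $v_j(x) = v_j(a/q)$ for $j \leq m$ combined with $v_m(a/q) \leq q^{3/8} < q = \den(x)$ force $m < r(x)$.

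The main technical obstacle is the edge case when $a/q$ sits near the boundary between two cylinders, since then the $q^{-3/4}$-neighborhood can straddle this boundary. This is addressed by the bookkeeping above: after matching representations and possibly increasing~$m$ by a bounded amount, both points lie in a common cylinder of level $m$ within $\gT(m^\lambda)$, so that~\eqref{eq:unicont-fext-unbound} applies and gives the desired~$o(1)$ bound uniformly as~$q \to \infty$.
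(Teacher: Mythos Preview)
Your overall strategy---reduce to Proposition~\ref{prop:fext-unbound-continuity} by finding an $m\to\infty$ with $a/q,x\in\gT(m^\lambda)$ sharing their first $m$ partial quotients---is exactly the paper's. The difference is entirely in the choice of $m$, and there your argument has a quantitative gap.

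You pick $m$ maximal with $v_m(a/q)\leq q^{3/8}$. This gives a level-$m$ cylinder of length $\asymp q_m^{-2}$, but since $q_m\in[q^{3/8-o(1)},q^{3/8}]$ and $q_{m+1}\in(q^{3/8},q^{3/8+o(1)}]$, the distance from $a/q$ to the convergent endpoint $p_m/q_m$ is only $\asymp 1/(q_m q_{m+1}) = q^{-3/4+o(1)}$, the \emph{same} order as the neighborhood radius $q^{-3/4}$ you must cover. There is no slack, so you cannot conclude that the interval $(\tfrac{a-q^{1/4}}q,\tfrac{a+q^{1/4}}q)$ sits inside the cylinder. Your proposed fix (``increasing $m$ by a bounded amount'') goes the wrong way: it shrinks the cylinder further. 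And once $x$ falls into the adjacent cylinder $C'$, its first $m$ quotients genuinely differ from those of $a/q$, so the hypothesis of~\eqref{eq:unicont-fext-unbound} fails.

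The paper instead takes $m$ \emph{much} smaller, namely $m=2\lceil B^{1/\lambda}\rceil+1$ with $B=(\log q)(\log\log q)^2$, so that $m=(\log q)^{1/\lambda+o(1)}$ and hence $q_m<q^{1/3}$. This gives distance $\gg q^{-2/3}(\log q)^{-2}\gg q^{-3/4}$ from $a/q$ to $p_m/q_m$, with room to spare. The second key point you are missing is the parity: the paper takes $m$ odd when $x>a/q$ (so $p_m/q_m>a/q$ and the right half-neighborhood lies between $a/q$ and $p_m/q_m$, hence in the cylinder), and $m$ even when $x<a/q$. This disposes of the ``edge case'' cleanly, with no adjacent-cylinder bookkeeping needed. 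Finally, a minor correction: your $m$ satisfies $m\asymp \log q/\log\log q$, not $m\asymp\log q$, since $q_m\leq\prod_j(b_j+1)\leq(B_q+1)^m$; this is still enough for $m^\lambda\geq B_q$, but the stated bound is off.
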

\begin{proof}
  Let~$B := (\log q)(\log\log q)^2$. We assume first~$x\in(a/q, (a+q^{1/4})/q)$, and we let~$m:=2\lceil B^{1/\lambda}\rceil+1$, so that in particular~$m=(\log q)^{1/\lambda+o(1)}$ as~$q\to\infty$. Let~$a\in \gA_q$, denote~$x' := a/q$ and let~$x\in(a/q, (a+q^{1/4})/q)$ with either~$x \cap \gT_q$, or~$x=b/q$ where~$b\in \gA_q$. We wish to prove that
  $$ f^*(x) - f^*(x') = o(1) $$
  as~$q\to\infty$, with a rate of decay depending at most on~$h$. This follows from Proposition~\ref{prop:fext-unbound-continuity} if we can prove that~$x, x'\in \gT(B)$, $r(x) \geq m$ and~$b_j(x) = b_j(x')$ for~$j\leq m$. The fact that~$x, x' \in \gT(B)$ follows by definition. Write~$x' = a/q = [0; b_1, \dotsc, b_r]$. Note that~$r\ll \log q$ by Euclid's algorithm, so that by definition of~$\gT(B)$ and our choice of~$B$, we have~$b_j \ll (\log q)(\log\log q)^2$. By induction (see the proof of Theorem~31 in~\cite{Khintchine1963}), we have~$q^{1/r} \leq 2 (b_1 \dotsc b_r)^{1/r} \ll (\log q)(\log \log q)^2$, and therefore~$r \gg (\log q)/\log\log q$. We deduce that~$r(x') \geq m$ for~$q$ large enough. Finally, let~$a^* / q^* := [0, b_1, \dotsc, b_m]$ be the~$m$-th convergent of~$a/q$. Then we have~$(q^*)^{1/m} \ll (\log q)(\log\log q)^2$ as above, and therefore~$\log(q^*) \ll m \log\log q = o(\log q)$ as~$q\to\infty$. In particular~$q^*<q^{1/3}$ for~$q$ large enough. Since~$m$ is odd, we deduce
  $$ \frac{a^*}{q^*} - \frac{a}{q} > \frac1{(q^*)^2(b_{m+1}+2)} \gg \frac1{q^{2/3}(\log q)^2} \geq \frac{q^{1/4}}q $$
  for~$q$ large enough. In particular, any~$x\in (\frac aq, \frac{a+q^{1/4}}q)$ satisfies~$a/q<x<a^*/q^*$, and it follows that~$b_j(x) = b_j(a/q)$ for~$j\leq m$. This concludes the proof.

  The remaining case~$x\in((a-q^{1/4})/q, a/q)$ follows by an identical argument taking~$m$ even instead of odd.
\end{proof}

\subsubsection{Proof of Theorems~\ref{dless0} and~\ref{dless1}: The convergence in distribution}

Let~$\delta>0$. We now assume that either~$\Re(k)<0$ and~$h$ satisfies~\eqref{coh}, or~$\Re(k)>0$ and~$h$ satisfies~$h(x) = O(\e^{\abs{x}^{-1+\delta}})$. Let $f^*$ be either $\fem$ or $\fep$, depending on the sign of~$\Re(k)$. By Lemmas~\ref{lem:fem-cS} and \ref{lem:fep-cS}, the map~$f^*$ satisfies the conclusion of Lemma~\ref{lem:fstar-stab-rational}.

We will show that for any function $G\in\mathcal C_c^\infty(\C)$ we have
\begin{equation*}
  \CR := \frac1{\varphi(q)} \sum_{\substack{a=0 \\ (a, q)=1}}^{q-1} G(f^*(a/q))=\int_0^1 G(f^*(x))\,\df \nu+o(1)
\end{equation*}
as $q\to\infty$. 

For $0\leq j < q^{3/4}$, let $I_j:=[j q^{1/4},(j+1)q^{1/4}]\cap [0,q].$ If $j<q^{3/4}-1$, by a standard estimate~\cite[eqs.~(1.13), (4.11)]{Fouvry1991}, we have
\begin{equation}\label{irre}
\sum_{a\in \gA_q\cap I_j}1\leq   \ssum{(a, q)=1 \\ a\in I_j} 1 = \frac{\varphi(q)}{q}q^{1/4}+ O(q^{1/10 }).
 \end{equation}
Given $\eps\in(0, 1)$, we let $E_{\pm}=\{j \mid  \pm ( \#  I_{j}\cap  \gA_q-\varphi(q)q^{-3/4}) > \eps  \varphi(q)q^{-3/4}\}$ and $E=E_{+}\cup E_-$. 
By~\eqref{irre} it is clear that $E_+$ is empty if we assume $q$ is large enough. Also, summing~\eqref{irre} over $j\notin E_{-}$ we deduce from Lemma~\ref{flgaq} that $\# E=\# E_{-}=O(\eps q^{3/4})$. Now, for each $j\notin E$ we fix any $y_j\in I_{j}\cap \gA_q$.
Since $\|G\|_\infty, \|G'\|_\infty<\infty$, by Lemma~\ref{flgaq} and equations~\eqref{irre} and~\eqref{eq:fstar-stab-rational}, we have
\begin{equation*}
\begin{split}
  \CR {}& = \frac1{\varphi(q)} \sum_{a\in \gA_q} G(f^*(a/q)) + o(1) \\
  &{}= \frac1{\varphi(q)} \ssum{0\leq j \leq q^{3/4}\\ j\notin E} \sum_{a\in I_j\cap \gA_q} G(f^*(a/q)) + o(1)+O(\eps)\\
  &{}= \frac1{\varphi(q)} \ssum{0\leq j \leq q^{3/4}\\ j\notin E} G(f^*(y_j)) \sum_{a\in I_j\cap \gA_q}  1+ o(1)+O(\eps).
\end{split}
\end{equation*}
For $j\notin E$ the inner sum is $\frac{\varphi(q)}{q}q^{1/4} (1+O(\eps))$ and thus by~\eqref{eq:fstar-stab-rational} and Lemma~\ref{flgaq2} we deduce
\begin{align*}
  \CR {}  &{}= \frac1{q^{3/4}} \ssum{0\leq j \leq q^{3/4}\\ j\notin E} G(f^*(y_j))   + o(1)+O(\eps)\\
  &{}= \ssum{0\leq j \leq q^{3/4}\\ j\notin E}  \int_{(\frac1qI_j)\cap \gT_q}G(f^*(x)) \df x + o(1)+O(\eps)\\
  &{}=  \int_{ \gT_q}G(f^*(x)) \df x + o(1)+O(\eps)\\
  &{}=  \int_{ [0,1]}G(f^*(x)) \df x + o(1)+O(\eps),
\end{align*}
since $\meas(\cup_{j\in E}(\frac1qI_j))=O(\eps)$. Letting $\eps\to0$ sufficiently slowly we obtain the claimed result.

This proves the first parts of Theorems~\ref{dless0} and \ref{dless1}.

\subsection{Proof of Theorem~\ref{dless0}: Diffuseness when $\Re(k)<0$}

We now focus on the case~$\Re(k)<0$ and aim to show the second part of Theorem~\ref{dless0}.

\begin{proposition}\label{prop:cdf-cont-kneg}
  Let $k$, $f$, $\fem$ and $h$ as in Theorem~\ref{thm:kneg-fdagger-ext} and let~$\phi:\C\to\R$ be a linear form. Assume further that~$h$ is real analytic on $(-1,1)\setminus\{0\}$.
  Assume that for some choice of sign~$\pm$, there exists a set of positive measure $C \seq {\pm \R_{>0}}$ and a constant $\alpha \in\R$ such that  $\phi(\fem(x))=\alpha $ for $x\in C$. Then $\phi(\fem(x))=\alpha$ for almost all $x\in \pm \R_{>0}$.
\end{proposition}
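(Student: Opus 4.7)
\textbf{Plan for the proof of Proposition~\ref{prop:cdf-cont-kneg}.}

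The strategy is to iterate the reciprocity~\eqref{mcg} on a cylinder and then invoke a Bernstein--Remez-type rigidity for holomorphic functions, leveraging the real-analyticity of~$h$. By the $1$-periodicity~\eqref{eq:f-weak-per} of $\fem$ separately on~$\R_{>0}$ and $\R_{<0}$, it suffices to treat $C\subseteq(0,1)$ and to show that $\phi\circ\fem=\alpha$ almost everywhere on~$(0,1)$.

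First I would fix an even integer $m\geq 2$ and a cylinder $I_m=\{x\in(0,1):b_j(x)=b_j,\,1\leq j\leq m\}$ with partial quotients bounded by some $B$, such that the relative density $|C\cap I_m|/|I_m|$ is bounded below by some $c>0$; such a cylinder exists by Lebesgue density applied at a density point of $C\cap\gS$. Parametrising $x\in I_m$ via $z:=T^m(x)\in(0,1)$ and the Möbius inverse $x=T_m^{-1}(z)$, iterating~\eqref{mcg} yields
\[\fem(x) = \tilde P_m(z) + \tilde Q_m(z)\,\fem(z),\]
where $\tilde P_m(z):=\sum_{j<m}w_j(T_m^{-1}(z))$ and $\tilde Q_m(z):=\Pi_m(T_m^{-1}(z))^{-k}$, with $\Pi_m:=\prod_{i<m}T^i$. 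Real-analyticity of~$h$ on $(-1,1)\setminus\{0\}$ and holomorphy of the Möbius maps $T^i\circ T_m^{-1}$ ($0\leq i\leq m-1$) ensure that $\tilde P_m,\tilde Q_m$ extend holomorphically to a common complex neighbourhood~$U$ of $(0,1)$, with $L^\infty$-bounds uniform in~$m$ over cylinders with $b_j\leq B$. By~\eqref{eq:size-T-consec} one has $|\tilde Q_m(z)|\ll 2^{m\Re(k)/2}\to 0$ since $\Re(k)<0$.

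Second, the image $C_m:=T^m(C\cap I_m)\subset(0,1)$ has Lebesgue measure bounded below (by bounded distortion of~$T^m$ on $I_m$). On $C_m$ the hypothesis gives
\[\phi(\tilde P_m(z))-\alpha = -\phi(\tilde Q_m(z)\fem(z)) = O(2^{m\Re(k)/2}).\]
The function $z\mapsto\phi(\tilde P_m(z))-\alpha$ is holomorphic on~$U$ with uniformly bounded sup-norm, and is small on a set of measure $\geq c$ inside~$(0,1)$. A quantitative Bernstein--Remez-type inequality for holomorphic functions then upgrades this to $|\phi(\tilde P_m(z))-\alpha|\ll 2^{\beta m\Re(k)/2}$ uniformly for $z\in(0,1)$, where $\beta=\beta(c,U)>0$. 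Transferring back via $x=T_m^{-1}(z)$ and applying the $\tilde Q_m$-bound once more, one obtains $\phi(\fem(x))=\alpha+O(2^{\beta m\Re(k)/2})$ uniformly on the \emph{entire} cylinder~$I_m$, and not only on $C\cap I_m$.

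Finally, I would propagate: the enlarged set $C^{(1)}:=\bigcup_{I_m:\,|C\cap I_m|/|I_m|\geq c}I_m$ is strictly larger than $C$, and the previous step gives $\phi\circ\fem=\alpha+o(1)$ on all of~$C^{(1)}$. Iterating this enlargement while using the ergodicity of the Gauss map on~$(0,1)$ (which mixes cylinders through all of the space), one should cover $(0,1)$ up to null in the limit, obtaining $\phi\circ\fem=\alpha$ almost everywhere. The main obstacle will be making this iteration quantitatively rigorous: one must balance the decay of the error $O(2^{\beta m\Re(k)/2})$ as $m\to\infty$ against the degradation of the Bernstein--Remez constant~$\beta=\beta(c)$ when the density threshold~$c$ is allowed to decrease at intermediate stages, which requires delicate bookkeeping of the geometry of the cylinder families relative to the iterates of~$T^2$.
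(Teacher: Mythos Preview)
Your iteration step has a genuine gap. Bernstein--Remez only yields $\phi\circ\fem = \alpha + O(2^{\beta m \Re(k)/2})$ on the cylinder $I_m$, not exact equality, so the ``enlarged'' set $C^{(1)}$ is \emph{not} a level set of $\phi\circ\fem$ and cannot be fed back as input to the same argument. Letting $m\to\infty$ kills the error but also shrinks each $I_m$ to a point; varying the cylinder at fixed $m$, Lebesgue density forces $\bigcup\{I_m : |C\cap I_m|\geq c|I_m|\}$ to converge in measure to $C$ itself as $m\to\infty$, so you gain nothing. Your own caveat about balancing $\beta(c)$ against the decay is not a technicality here --- it is the whole difficulty, and your sketch does not resolve it.

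The missing idea is that you should not throw away the term $\tilde Q_m(z)\fem(z)$ as an error at all. When $k\in\R$ (so that $\phi\circ f$ is itself a quantum modular form and one may take $\fem$ real), divide instead: on $D:=T^m(C\cap I_m)$ you have $\fem(z) = (\alpha - \tilde P_m(z))/\tilde Q_m(z) =: R(z)$, and $R$ is real-analytic on $(0,1)$ since $\tilde Q_m$ does not vanish. But $D$ has measure $1-O(\eps)$ (choose $I_m$ with density of $C$ at least $1-\eps$), so $C\cap D$ has positive measure, and on that set $\fem(z)=\alpha$ as well. Hence the real-analytic function $R-\alpha$ vanishes on a set of positive measure, so $R\equiv\alpha$, whence $\fem=\alpha$ on all of $D$. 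Letting $\eps\to 0$ finishes; no iteration, no Remez.

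You also do not address the case $k\notin\R$, which is genuinely different: then $\tilde Q_m(z)\in\C\setminus\R$ and $\phi$ does not commute with multiplication by $\tilde Q_m(z)$, so one cannot solve a single equation for $\phi(\fem(z))$. The paper instead takes \emph{two} inverse branches $U_{\bb_1}, U_{\bb_2}$ (differing in the last coefficient) to obtain a $2\times 2$ linear system in $\Re\fem(z), \Im\fem(z)$; the determinant is proportional to $\sin\big(\Im(k)\log(B_1(z)/B_2(z))\big)$, and one chooses the extra coefficients so that this is bounded away from $0$ uniformly in $z$. Cramer's rule then produces a real-analytic formula for $\phi(\fem(z))$ on a set of measure $1-O(\eps^{1/3})$, after which the rigidity argument above applies verbatim.
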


This clearly implies the second part of Theorem~\ref{dless0} and concludes its proof.

\begin{proof}[Proof of Proposition~\ref{prop:cdf-cont-kneg}]
  We start with the case $k\in\R$. Since the real part of a real-analytic function is real-analytic, we have that $\phi\circ f$ is a quantum modular form of weight $k$ and real-analytic period function $\phi\circ h$. In particular we can assume that $f$ is real valued and that $\phi$ restricted to $\R$ is the identity function.

  By Theorems~\ref{ds} and~\ref{thm:kneg-fdagger-ext}, there exists a set $X$ of full measure with $\fem$ well defined on $X$. Note that the equation~\eqref{mcg} holds on $X\setminus\{0\}$ with $f$ replaced by $\fem$, by taking the limit along convergents and using the definition~\eqref{dfd-X}.
  We also recall that in our proof of Theorem~\ref{thm:kneg-fdagger-ext}, the set~$X$ was taken to be~$X=\gS$ from Lemma~\ref{bll0}. It is clear that~$\gS$ is closed under any fixed inverse branch of the Gauss map. We may thus assume that~$X$ has this property.

  We assume $C\seq \R_{>0}$, the other case being analogous. Also, by periodicity we can assume $ C\seq [0,1]\cap X$. For any $\eps>0$, by Lebesgue's density theorem we can find an interval $I\seq (0,1)$ such that $\meas(C\cap I)\geq (1-\eps) \meas(I)$. It follows that there exists an even $g\in\N$ and an inverse branch~$U_\bb$ of~$T^g$ given by~$U_\bb(y) = [0; b_1, \dotsc, b_g+ y]$ for some coefficients~$\bb = (b_1, \dotsc, b_g)\in\N^g$, such that 
  $D := \{x\in [0,1]\cap X\mid U_\bb(x)\in C\}$
  has measure $1-O(\eps)$. Repeatedly applying the reciprocity formula to $U_{\bb}(x)$ for any~$x\in D$, we have
  \begin{equation}\label{ffret}
    \alpha= \fem(
    U_{\bb}(x)) = \sum_{j= 0}^{g-1} \Big(\prod_{i=0}^{j-1} T^i(U_{\bb}(x))\Big)^{-k} h((-1)^j T^j(U_{\bb}(x)))+\Big(\prod_{i=0}^{g-1} T^i(U_{\bb}(x))\Big)^{-k}\fem (x).
  \end{equation}
  Note that~$T^i\circ U_\bb$ is a smooth map, and in fact a homography, for any~$i\leq g$.
  In particular, solving for $\fem(x)$, we obtain that  $\fem$ is given on $D$ by a function which is real analytic on $(0,1)$. For $\eps$ sufficiently small we must have $\meas(C\cap D)\geq \meas(C)-\meas([0,1]\setminus D)>0$, and thus by analytic continuation this implies that $\fem(x)=\alpha$ for all $x\in D$. Since $\eps>0$ is arbitrary we conclude that $\fem(x)=\alpha$ for a subset of $ (0,1)$ of measure $1$.  

  Now, let us assume $k\not\in\R$ and write $k=k_1+ik_2$ with $k_i\in\R$, $k_2\neq0$. We assume $\phi(z)=\Re(z)$, the general case being analogous. We let $\alpha\in\R$ such that $\Re(\fem(x))=\alpha$ for a set $C\seq[0,1]$ of positive measure. As in the case $k\in\R$, given $\eps>0$ we can find $\bb\in\N^g$ such that $D:=\{x\in [0,1]\cap X\mid U_\bb(x)\in C\}$ has measure $1-O(\eps)$. Since $(0,1]$ can be written as the disjoint union $(0,1]=\cup_{c\geq 1}U_{(c)}((0,1])$ we have
  \begin{align*}
    \sum_{c\ge1}\meas(\{x\in U_c((0,1])\mid U_\bb(x)\notin C\})=1-\meas( D)=O(\eps)
  \end{align*}
  In particular, $\meas(\{x\in U_c((0,1])\mid U_\bb(x)\notin C\})=O(\eps)$ for all $c\in\N$ and thus also $\meas(\{x\in (0,1]\mid U_{(\bb,c)}(x)\notin C\})=O(K^2\eps)$ for all $c\leq K$. Letting $D_c := \{x\in [0,1]\cap X\mid U_{(\bb,c)}(x)\in C\}$ we then have $\meas(D_c)=1-O(\sqrt[3]{\eps})$ for all $c\leq K:=\eps^{-1/3}$. 

  Next, we write the formula~\eqref{ffret} with $\bb_1=(\bb,c_1)$ and $\bb_2=(\bb,c_2)$ in place of $\bb$ for some $c_1,c_2\leq K$.
  Taking the real part of the resulting equations, for $x \in D':=D_{c_1}\cap D_{c_2}$ we find
  \begin{align}\label{ffret2}
    \alpha &= A_j(x)+\Re (B_j(x)^{k}\fem (x)) ,\qquad j=1,2
  \end{align}
  where for $j=1,2$ we have that $B_j(x)=\Big(\prod_{i=0}^{g} T^i(U_{(\bb,c_j)}(x))\Big)^{-1}$ and $A_j$ is real analytic on $(0,1)$. Notice that $B_j(x)=v_{g-1}+(c_j+x)v_{g}$ where $v_{g-1}$ and $v_g$ are the partial denominators of $[0;b_1,\dots,b_{g-1}]$ and $[0;b_1,\dots,b_{g}]$ repectively. In particular, if $c_1,c_2\in[\sqrt K, K]$ then $B_2(x)/B_1(x)={c_2}/{c_1}+O(\sqrt[6]\eps)$ uniformly in $x\in[0,1]$.

  Writing $L_j(x):=k_2\log(B_j(x))$, the equations~\eqref{ffret2} become
  \begin{align*}
    \alpha &= A_1(x)+B_j(x)^{k_1}\Re(\fem (x))\cos(L_j(x))-B_j(x)^{k_1}\Im(\fem (x))\sin(L_j(x)),\qquad j=1,2,\ x\in D'
  \end{align*}
  This is a system in $\Re(\fem (x)),\Im(\fem (x))$ of determinant 
  $B_1(x)^{k_1}B_2(x)^{k_1}\sin(L_1(x)-L_2(x))$ and solving for $\Re(f(x))$ we find
  \begin{align}\label{neqrf}
    \Re(\fem(x))=\frac{(\alpha-A_2(x)) B_1(x)^{k_1} \sin(L_1(x))-(\alpha-A_1(x)) B_2(x)^{k_1} \sin(L_2(x))}{B_1(x)^{k_1}B_2(x)^{k_1}\sin(L_1(x)-L_2(x))}
  \end{align}
  on $x\in D'$. 
  Now, for $\eps$ sufficiently small, we have 
  \begin{align*}
    \sin(L_1(x)-L_2(x))=\sin(k_2\log(c_2/c_1)+O(\sqrt[6]\eps))
  \end{align*}
  uniformly for $x\in[0,1]$. Thus, for $\eps$ sufficiently small we can find $c_1,c_2\in \N\cap [\sqrt K,K]$ such that $\sin(L_1(x)-L_2(x))$ does not vanish on $(0,1).$ For this choice one has that the right hand side of~\eqref{neqrf} is analytic on $(0,1)$. Since $\meas(D')=1-O(\sqrt[3]\eps)$, we then reach the conclusion as in the case $k\in\R$.
\end{proof}

\subsection{The continuity of the cumulative distribution function when $\Re(k)>0$}\label{sec:cont-cumul-distr}

We prove the following generalization of Theorem~\ref{dless1}. Also, as in~Section~\ref{stds1}, we allow for a twist by a root of unity $\vartheta$ and assume $f$ satisfies~\eqref{eq:recip-f-general}.

\begin{theorem}\label{gdless1}
  Let~$\phi:\C\to\R$ be a linear form, $\Re(k)>0$, and let $h:\R\setminus\{0\}\to\R$ be a function satisfying either of the following two conditions.
  \begin{enumerate}[label=($\text{\arabic*}\textsuperscript{*}$), ref=($\text{\arabic*}^*$)]
    \item\label{it:cond-kpos-1} The function $h$ extends to a continuous function on $[-1,1]$, and either~$\Im k = 0$ and there exists~$p\in\Z$, $\alpha\in(-1, 1)$ such that~$\phi(\vth^p h(\alpha)) \neq 0$; or~$\Im k \neq 0$ and there exists~$\alpha\in(-1, 1)$ such that~$h(\alpha)\neq 0$.
    \item\label{it:cond-kpos-2}
    One has $\lim_{x\to 0^\pm}|h(x)|=\infty$ for a choice of $\pm$, there exists $\delta\in(0,1)$ such that $h(x)\ll \e^{|x|^{-1+\delta}}$ for $|x|\leq1$, and
    \begin{itemize} 
      \item either $\Im(k)=0$ and there exists $p\in\Z$ such that for all small~$\eps>0$,
      \begin{equation}
        \liminf_{x\to 0^\pm} \inf\Big\{\bigg|\phi\bigg(\vartheta^p  \frac{h(x)-\uu ^k h(\uu x)}{|h(x)|} \bigg)\bigg|: \abs{\log \uu} \in (\eps, \eps^{2/3})\Big\} > 0;\label{eq:hypo-2star-real}
      \end{equation}
      \item or $\Im(k)\neq0$ and there exists $\pphi\in\R$ such that for all small~$\eps>0$,
      \begin{equation}
        \liminf_{x\to 0^\pm} \inf\Big\{\bigg|\phi\bigg(\e(\alpha) \frac{ h(x)- \uu^{k} h(\uu x)}{|h(x)|} \bigg)\bigg|: |\log \uu| \in (\eps, \eps^{2/3}),  |\alpha-\pphi| < \eps^{2/3} \Big\} > 0. \label{eq:hypo-2star-complex}
      \end{equation}
    \end{itemize}
  \end{enumerate}
  Then the cumulative distribution function of $(\phi\circ \fep)_\ast(\df \nu)$ is continuous. 
\end{theorem}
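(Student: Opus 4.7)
We argue by contradiction: assume that the CDF of $(\phi\circ \fep)_\ast(\df\nu)$ has a jump, equivalently that for some $\alpha_0\in\R$ the set $C := \{x \in [0,1]\setminus\Q : \phi(\fep(x)) = \alpha_0\}$ has positive Lebesgue measure. The strategy is to produce, under either hypothesis on $h$, a pair of points $y, y' \in C$ differing in a single continued fraction coefficient, whose difference $\fep(y) - \fep(y')$, when evaluated by $\phi$, is forced to be nonzero.

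By Lebesgue density applied to $C$, together with the fact that CF cylinders generate the Borel $\sigma$-algebra of $[0, 1]\setminus\Q$, we find for any $\eta>0$ an integer $n\geq 1$, a prefix $(b_1, \dotsc, b_{n-1})\in \N^{n-1}$, and, via Fubini over the subsequent coordinates, a positive measure set of tails $(c_{n+1}, c_{n+2}, \dotsc)$ for which both $y = [0; b_1, \dotsc, b_{n-1}, c_n, c_{n+1}, \dotsc]$ and $y' = [0; b_1, \dotsc, b_{n-1}, c'_n, c_{n+1}, \dotsc]$ lie in~$C$, for appropriately chosen values $(c_n, c'_n)$. We take $c'_n - c_n$ to be a multiple of the order $M$ of $\vartheta$, which by~\eqref{vtj} guarantees $\vartheta_n(y) = \vartheta_n(y')$. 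The prefix and $c_n$ are further tuned to bring $\vartheta_n^{-1}v_n(y)^{-i\Im k}$ close to the phase $\vartheta^p$ (when $\Im k=0$) or $e(\beta)$ (when $\Im k\neq 0$) appearing in the hypothesis.

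Using $v_n(y') = v_n(y) + (c'_n - c_n)v_{n-1}$ and the series~\eqref{fsdp}, one then expands
\begin{equation*}
    \fep(y) - \fep(y') = \vartheta_n^{-1} v_n(y)^{-k}\bigl(h(x) - u^k h(u x)\bigr) + R_n(y,y'),\qquad u := \frac{v_n(y)}{v_n(y')},\ x := (-1)^{n-1}\frac{v_{n-1}}{v_n(y)},
\end{equation*}
with a remainder $R_n(y, y')$ collecting the contributions of the indices $j>n$ after the perturbation $c_n\to c'_n$ cascades through the subsequent $v_j$ and arguments $v_{j-1}/v_j$ of $h$. Applying $\phi$ and exploiting $y, y' \in C$, this rewrites as
\begin{equation*}
    \bigl|\phi\bigl(\vartheta_n^{-1} v_n(y)^{-i\Im k}\, \tfrac{h(x) - u^k h(u x)}{|h(x)|}\bigr)\bigr| = \frac{|\phi(R_n(y,y'))|}{v_n(y)^{-\Re k}|h(x)|}.
\end{equation*}
Under~\ref{it:cond-kpos-2}, taking $c_n\to\infty$ with $|\log u| \asymp M/c_n$ inside the prescribed range $(\eps, \eps^{2/3})$ and $x\to 0^\pm$, the left-hand side is bounded below by a positive constant via~\eqref{eq:hypo-2star-real} (or~\eqref{eq:hypo-2star-complex}), while the right-hand side tends to $0$---a contradiction. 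Under~\ref{it:cond-kpos-1}, we choose $c_n$ so that $x$ approximates the interior point $\beta \in (-1,1)$ where $\phi(\vartheta^p h)$ (resp.\ $h$) does not vanish; continuity of $h$ yields a left-hand side tending to $|(1-u^k)\phi(\vartheta^p h(\beta))|/|h(\beta)|\neq 0$ (since $u\neq 1$ and $k\neq 0$), again a contradiction.

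\textbf{Main obstacle.} The principal difficulty is estimating the tail remainder $R_n(y, y')$. The perturbation $c_n\to c'_n$ affects all later $v_j$ via a Möbius action and perturbs each argument $v_{j-1}/v_j$ of $h$ by $O(v_j^{-2})$; combining this with the exponential growth $v_j \gg 2^{j/2}$ and either the boundedness of $h$ (case~\ref{it:cond-kpos-1}) or the growth bound $h(x) \ll \exp(|x|^{-1+\delta})$ (case~\ref{it:cond-kpos-2}), one proves $|R_n(y, y')| = o(v_n(y)^{-\Re k}|h(x)|)$, provided suitable auxiliary constraints on the later $c_{n+j}$ are enforced (e.g. $c_{n+1}$ moderately large in the bounded case, $c_{n+j}$ not growing too fast in the unbounded case). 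A further subtle issue is the alignment of $\vartheta_n^{-1}v_n^{-i\Im k}$ with the precise phase prescribed by the hypothesis, which when $\Im k\neq 0$ requires a density argument on the prefix so that $\log v_n$ hits the relevant residue class modulo $2\pi/\Im k$, compatibly with the Fubini selection of the pair $(y, y')$.
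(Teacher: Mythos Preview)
Your core idea---compare $\fep$ at two points sharing a continued-fraction prefix but differing at one coefficient, and use the hypothesis on $h$ to force $\phi$ of the difference to be nonzero---is exactly right and is the heart of the paper's argument. The paper organises this dually to your contradiction scheme, via an abstract criterion (Lemma~\ref{lem:abscont-close}): one produces a near-partition $\{S_j\}$ of $[0,1]$ such that within each $S_j$, the equality $\phi(\fep(x))=\phi(\fep(y))$ forces $|b_g(x)-b_g(y)|$ to be small, hence the level sets have small relative measure.

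The genuine gap in your plan is the selection step. You ask for $y,y'\in C$ with an \emph{identical} infinite tail $(c_{n+1},c_{n+2},\dotsc)$ and with $c_n,c'_n$ in a prescribed (and, in case~\ref{it:cond-kpos-2}, vanishingly small) slice of the $n$-th coordinate, while simultaneously imposing growth constraints on the later $c_{n+j}$. Lebesgue density on cylinders does not by itself yield this: the conditional mass of the slice $\{c_n\asymp N\}$ is $O(1/N)$, so as $N\to\infty$ one must keep refining the prefix, and nothing in a bare Fubini argument guarantees a common tail compatible with all constraints. The paper avoids this entirely by \emph{not} requiring a shared tail. It proves (Lemmas~\ref{lemdis}, \ref{lemdis3}) a recurrence statement: almost every $x$ has an index $g\in[V,2V]$ with $b_g(x)$ in a designed set $\I\subset[\sqrt V,K\sqrt V]$ (chosen so that the congruence $\pmod{N}$ and the phase of $v_g^{-i\Im k}$ are as required) and with the next coefficients either fixed (case~\ref{it:cond-kpos-1}) or of controlled growth (case~\ref{it:cond-kpos-2}). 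The tail contributions for $x$ and $y$ are then bounded \emph{separately}, not by cancellation: in the bounded case by $b_{g+1}=L\to\infty$, and in the unbounded case via a calibration lemma (Lemma~\ref{slt}) that pits the decay $v_j^{-\Re k}$ against the growth of $h$. Two further points where your sketch is too coarse: for $\alpha\neq 0$ in~\ref{it:cond-kpos-1} one cannot reach $v_{n-1}/v_n\approx\alpha$ by tuning a single $c_n$; the paper fixes $b_{g+1},\dotsc,b_{g+r}$ to be the CF digits of $|\alpha|$ and uses minimality of $r(|\alpha|)$ to annihilate the intermediate $h$-values. And the phase alignment when $\Im k\neq 0$ is achieved not by a density argument on prefixes but by building it into the definition of $\I$, whose cardinality lower bound $\gg\xi\sqrt V$ is uniform in the prefix.
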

\begin{remark}\label{rmk:relax-hypo-h}
  \begin{itemize}
    \item The hypotheses could be relaxed somehow. For instance, instead of~\ref{it:cond-kpos-1}, it suffices to ask that~$h$ has finite left- and right-limits at certain rationals along with a non-vanishing condition. This will be clear from our arguments. One could also deal with some cases when the limits~\eqref{eq:hypo-2star-real} and~\eqref{eq:hypo-2star-complex} are zero, as long as one can control the asymptotic of these quantities as $x\to 0$. We refer to Remark~\ref{nlim} below, as well as to the proof of Corollary~\ref{cordedek} when $a=0$ for an example where this consideration is relevant.
    \item When~$h$ is continuous on~$[-1, 1]$, the condition~\ref{it:cond-kpos-1} is clearly necessary, since otherwise~$\phi \circ \fep$ vanishes identically on~$[0, 1]$.
    \item Some condition of the type~\ref{it:cond-kpos-2} is necessary in order to prevent examples such as~$h(x) = 1 - \abs{x}^{-k}$, for which the function~$f$ is constant.
  \end{itemize}
\end{remark}

\begin{proof}[Proof of the second part of Theorem~\ref{dless1}]
  Assume first that~$h$ is continuous on~$[-1, 1]$ and non-zero. Then the hypothesis~\ref{it:cond-kpos-1} is clearly satisfied with either~$\phi=\Re$ or~$\phi=\Im$. We deduce that~$(\phi \circ \fep)_\ast(\df\nu)$ is diffuse, and therefore so is~$\fep_\ast(\df\nu)$.

  Assume next that $h(x)\ll\e^{\abs{x}^{-1+\delta}}$ on~$[-1, 1]\setminus\{0\}$ and~$h(x) \sim c x^{-\lambda}$ as~$x\to 0^+$. Then for~$\abs{\log \uu} \leq 1$, we deduce that as $x\to0^+$,
  $$ \frac{h(x) - \uu^k h(\uu x)}{\abs{h(x)}} = \frac{c}{\abs{c}} (1+o(1)) (1 - \uu^{k-\lambda}) \gg_{\eps,k,\lambda} 1 $$
  if additionally~$\abs{\log \uu} \geq \eps$. Similarly as above, we deduce that hypothesis~\ref{it:cond-kpos-2} holds for~$\phi=\Re$ or~$\phi=\Im$ and we deduce that~$\fep_\ast(\df\nu)$ is diffuse.

  This shows that the second part of Theorem~\ref{dless1} holds, and concludes its proof.
\end{proof}

\begin{remark}
  When~$h(x) = c x^{-\lambda}$ with~$\lambda$ satisfying~$\Re(k)>0$ but~$\lambda\not\in\R$, hypothesis~\ref{it:cond-kpos-2} no longer holds, but it is plausible that our arguments could be adapted by further localizing the values of~$x$ involved in~\eqref{eq:hypo-2star-real}--\eqref{eq:hypo-2star-complex}. We do not pursue this here.
\end{remark}

We first give the following lemma.

\begin{lemma}\label{lem:abscont-close}
  Let~$F:[0, 1]\to \R$ be measurable. Assume that for each~$\eps>0$, we can find a countable collection~$(S_j)_j$ of disjoint measurable subsets of~$[0, 1]$, such that
  $$ \sum_j \meas(S_j) \geq 1 - \eps, $$
  and for each~$j$ and~$y\in S_j$,
  \begin{equation}
    \meas(\{x \in S_j, F(x)=F(y)\}) \leq \eps \meas(S_j).\label{eq:hypo-xy-close}
  \end{equation}
  Then~$F_*(\df\nu)$ is continuous.
\end{lemma}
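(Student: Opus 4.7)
The plan is to reduce continuity of the cumulative distribution function of $F_*(\df\nu)$ to the statement that $F_*(\df\nu)$ has no atoms, i.e.\ $\meas(F^{-1}(\alpha)) = 0$ for every $\alpha\in\R$, and then exploit the hypothesis once for each $\alpha$ with an arbitrary $\eps$.

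Fix $\alpha\in\R$ and set $A_\alpha := F^{-1}(\alpha)\cap[0,1]$. Given $\eps>0$, apply the hypothesis to obtain the collection $(S_j)_j$ of disjoint measurable subsets of $[0,1]$ with $\sum_j \meas(S_j) \geq 1-\eps$. Split
\begin{equation*}
  \meas(A_\alpha) \leq \meas\Big([0,1]\setminus\bigcup_j S_j\Big) + \sum_j \meas(A_\alpha \cap S_j) \leq \eps + \sum_j \meas(A_\alpha \cap S_j).
\end{equation*}
For each index $j$, there are two cases: either $A_\alpha \cap S_j = \emptyset$, contributing $0$; or there exists some $y_j \in S_j$ with $F(y_j)=\alpha$, in which case $A_\alpha \cap S_j = \{x\in S_j : F(x)=F(y_j)\}$, and the assumption~\eqref{eq:hypo-xy-close} gives $\meas(A_\alpha \cap S_j) \leq \eps\,\meas(S_j)$. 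Summing over $j$ yields $\sum_j \meas(A_\alpha \cap S_j) \leq \eps \sum_j \meas(S_j) \leq \eps$, and therefore $\meas(A_\alpha) \leq 2\eps$.

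Since $\eps>0$ was arbitrary, $\meas(F^{-1}(\alpha)) = 0$ for every $\alpha\in\R$. Equivalently, the pushforward measure $F_*(\df\nu)$ assigns mass zero to every singleton, which is precisely the statement that its cumulative distribution function has no jumps, i.e.\ is continuous. There is no substantive obstacle here: the hypothesis~\eqref{eq:hypo-xy-close} was custom-tailored to bound $\meas(A_\alpha \cap S_j)$ uniformly in the choice of $\alpha$, so the argument reduces to a one-line dichotomy on each piece of the partition plus a summation.
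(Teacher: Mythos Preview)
Your proof is correct and essentially identical to the paper's own argument: fix a value, split the preimage along the $S_j$ and the complement, use the dichotomy (empty intersection or pick a witness~$y_j$) on each piece, sum, and let~$\eps\to 0$. The only cosmetic difference is notation.
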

\begin{proof}
  Let~$a\in\R$, and~$X=F^{-1}(\{a\})$. Let~$\eps>0$ be arbitrary, and~$(S_j)_j$ given by the hypothesis.
  For each~$j$, either~$S_j\cap X$ is empty, or there exists~$y\in S_j$ such that~$F(y)=a$ and then
  $$ \meas(X\cap S_j) = \meas(\{x \in S_j, F(x) = F(y)\}) \leq \eps \meas(S_j) $$
  by hypothesis. In both cases we have~$\meas(X \cap S_j) \leq \eps \meas(S_j)$. Summing over~$j$, we find
  $$ \meas(X) \leq \meas([0, 1]\setminus \cup_j S_j) + \sum_j \meas(X \cap S_j) \leq 2\eps, $$
  and letting~$\eps\to0$ gives the conlusion.
\end{proof}

We divide the proof depending on whether \ref{it:cond-kpos-1} or \ref{it:cond-kpos-2} holds. 

\subsubsection{The case of bounded $h$.}\label{sec:cont-cdf-hbounded}

Let~$N\in\Z_{>0}$ be the order of~$\vth$ as a root of unity, so that
$$ \vth^N = 1, $$
where~$\vth$ is the automorphy factor in the generalized period relation~\eqref{eq:recip-f-general}.
We recall that $\vartheta_j$ was defined in~\eqref{vtj}. Recall also from~\eqref{eq:def-vthj-numbers} that the notation~$\vth_j(x)$ for a number~$x\in\Q$ depends on how we expand~$x$ in CF when~$x$ is rational. In what follows, we will work with the expansion of odd length.
For each~$g\geq 1$, we define~$e_g(x) \in \Z/N\Z$ through
$  \vth_g(x) = \vth^{e_g(x)}$ 
and notice that by definition we have
\begin{equation}\label{eq:rel-qgx}
  e_g(x)\equiv e_{g-1}(x)+(-1)^g b_g+3(-1)^{g+1}\pmod{N}.
\end{equation}

Given~$c_1, \dotsc, c_\ell\geq 1$, we denote
\begin{equation}\label{notj}
  \begin{split}
    J(c_1, \dotsc, c_\ell) := {}& \{x\in[0, 1], b_i(x) = c_i \text{ for } 1\leq i \leq \ell \}, \\
    \mu(c_1, \dotsc, c_\ell) := {}& \meas(J(c_1, \dotsc, c_\ell)).
  \end{split}
\end{equation}
By~\cite[p.57]{Khintchine1963}, we have
\begin{equation}
  \mu(c_1, \dotsc, c_\ell) = \frac1{v_\ell(x)(v_\ell(x) + v_{\ell-1}(x))} \asymp \frac1{v_\ell(x)^2}, \qquad (x = [0; c_1, \dotsc, c_\ell]).\label{eq:value-mu}
\end{equation}

\begin{lemma}\label{lemdis}
  Let $m\in \N$, $K>1$ and $(L_1,\dots,L_m)\in\N^m$ be fixed. Let~$V\geq1$, and for each~$e\in\Z/N\Z$ and~$\omega>0$, let $\I(e, \omega) \subset \Z\cap[\sqrt{V}, K\sqrt{V}]$ be given, and
  $$ T := \inf_{e\in\Z/N\Z,\, \omega>0} \# \I(e, \omega). $$
  Also, for~$x\in(0, 1)\setminus\Q$, let
  $$
  G_V(x):=\{g\in [V,2 V] \mid b_g(x) \in \I(e_{g-1}(x), v_{g-1}(x)),\, b_{g+i}(x) =L_i\ \forall i=1,\dots,m\}.
  $$
  and let $X_{V, \I} = X_{V,\I}(K, L_1, \dotsc, L_m)$ be the subset of $[0,1]\setminus\Q$ such that $G_V(x)$ contains at least an even and an odd integer.
  Then~$\meas(X_{V,\I})=1+o(1)$ as $V, T \to \infty$,  where the rate of decay of~$o(1)$ depends at most on~$K, L_1, \dotsc, L_m$.
\end{lemma}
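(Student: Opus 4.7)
The plan is to apply the second moment method separately to the counts of even and odd indices $g\in G_V(x)\cap[V,2V]$. Let $\nu$ denote Lebesgue measure on $[0,1]$, define $A_g:=\{x\in(0,1)\setminus\Q:g\in G_V(x)\}$ for $g\in[V,2V]$, and write $Z_{\mathrm{ev}}(x)$, $Z_{\mathrm{od}}(x)$ for the numbers of even and odd integers in $G_V(x)\cap[V,2V]$, respectively. Since $X_{V,\I}\supseteq\{Z_{\mathrm{ev}}\geq1\}\cap\{Z_{\mathrm{od}}\geq1\}$, it suffices to show $\nu(\{Z_{\mathrm{ev}}=0\})$, $\nu(\{Z_{\mathrm{od}}=0\})\to 0$ as $V,T\to\infty$, with rates depending only on $K$ and $L_1,\dots,L_m$; all implicit constants below are allowed to depend on these parameters.

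For the first moment, I partition $[0,1]$ into cylinders $J(c_1,\dots,c_{g-1})$ from~\eqref{notj}, on each of which $e_{g-1}$ and $v_{g-1}$ are constant so that $\I(e_{g-1},v_{g-1})\subset\Z\cap[\sqrt V,K\sqrt V]$ is a prescribed set of cardinality at least $T$. Iterating the continuant recursion $v_\ell=b_\ell v_{\ell-1}+v_{\ell-2}$ and plugging into~\eqref{eq:value-mu} yields the bounded-distortion estimate
\[\mu(c_1,\dots,c_{g-1},c,L_1,\dots,L_m)\asymp\mu(c_1,\dots,c_{g-1})/c^2.\]
Summing over $c\in\I(e_{g-1},v_{g-1})$, whose elements all satisfy $c\asymp\sqrt V$, gives a conditional measure of order $T/V$ uniformly in the cylinder; hence $\nu(A_g)\asymp T/V$ uniformly in $g\in[V,2V]$, and $\E Z_{\mathrm{ev}}\asymp\E Z_{\mathrm{od}}\asymp T$.

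For the second moment, pairs $g_1<g_2$ in $[V,2V]$ with $g_2-g_1\leq m+1$ contribute $\ll\E Z_{\mathrm{ev}}$ via the trivial bound $\nu(A_{g_1}\cap A_{g_2})\leq\nu(A_{g_1})$. For pairs with $g_2-g_1>m+1$, the blocks of partial quotients defining $A_{g_1}$ and $A_{g_2}$ are disjoint; after conditioning on $b_1,\dots,b_{g_1+m}$, the event $A_{g_2}$ becomes a cylinder event for the orbit shifted by $g_2-g_1-m$ iterations of the Gauss map $T$. The crux is then to invoke the exponential mixing of the Gauss dynamical system (the spectral gap of its transfer operator; see~\cite[\S2]{BaladiVallee2005}), which gives
\[\nu(A_{g_1}\cap A_{g_2})=\nu(A_{g_1})\nu(A_{g_2})\bigl(1+O(\lambda^{g_2-g_1})\bigr)\]
for some absolute $\lambda\in(0,1)$. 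The geometric remainder sums to $\ll(\E Z_{\mathrm{ev}})\cdot(T/V)\ll T^2/V$, so $\mathrm{Var}\,Z_{\mathrm{ev}}\ll T^2/V+T$, and Chebyshev's inequality yields $\nu(\{Z_{\mathrm{ev}}=0\})\ll1/V+1/T$, which vanishes as $V,T\to\infty$. The same bound holds for $Z_{\mathrm{od}}$, and a union bound concludes. The main technical obstacle is verifying that exponential mixing applies uniformly despite the measurable dependence of $\I(e_{g-1}(x),v_{g-1}(x))$ on the entire past; this is handled by observing that $e_{g-1}$ takes only $N$ values and $v_{g-1}$ is constant on each cylinder, so $A_g$ decomposes as a disjoint union of standard cylinder events to which the mixing result applies directly.
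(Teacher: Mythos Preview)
Your second-moment approach has a genuine gap at the decorrelation step. The event $A_{g_2}$ depends on the \emph{entire} past $b_1,\dots,b_{g_2-1}$ through $\I(e_{g_2-1}(x),v_{g_2-1}(x))$, and this dependence is arbitrary: the sets $\I(e,\omega)$ are given as arbitrary subsets of $[\sqrt V,K\sqrt V]$ of size $\geq T$. Consequently $A_{g_2}$ is not $\sigma(b_{g_2},b_{g_2+1},\dots)$-measurable, so the standard exponential mixing for the Gauss map does not yield your claimed formula $\nu(A_{g_1}\cap A_{g_2})=\nu(A_{g_1})\nu(A_{g_2})(1+O(\lambda^{g_2-g_1}))$. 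Your suggested fix of decomposing over values of $(e_{g_2-1},v_{g_2-1})$ does not help: after that decomposition the ``past'' piece extends all the way to coordinate $g_2-1$, leaving no gap. In fact the asserted formula is false in general: bounded distortion only gives $\P(A_{g_2}\mid J(c_1,\dots,c_{g_2-1}))\asymp T/V$ with implicit constants independent of the cylinder, but the actual value may fluctuate by a fixed factor depending on which $\I$ is selected (e.g.\ take $\#\I(e,\omega)=T$ for some $\omega$ and $=2T$ for others). This yields only $\nu(A_{g_1}\cap A_{g_2})\asymp\nu(A_{g_1})\nu(A_{g_2})$, which gives $\mathbb{E}Z_{\mathrm{ev}}^2\asymp(\mathbb{E}Z_{\mathrm{ev}})^2$ and hence Chebyshev (or Paley--Zygmund) only bounds $\nu(\{Z_{\mathrm{ev}}=0\})$ away from $1$, not tending to $0$.

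The paper's argument avoids the second moment entirely. The key point is that the \emph{one-sided} bound $\P(g\in G_V(x)\mid J(c_1,\dots,c_{g-1}))\geq c_0\,T/V$ holds uniformly in the cylinder, by the Khintchine distortion estimate~\eqref{meas}, regardless of which $\I$ the past selects. One then picks $\asymp V/(m+1)$ even indices $g$ in $[V,2V]$ spaced at least $m+1$ apart, so that the blocks $(b_g,\dots,b_{g+m})$ are disjoint; since each event $\{g\notin G_V\}$ is measurable in the first $g+m$ coordinates, iterated conditioning gives
\[
\nu\Big(\bigcap_j\{g_j\notin G_V\}\Big)\leq\big(1-c_0\,T/V\big)^{\lfloor V/(2m+2)\rfloor}=o(1)
\]
as $V,T\to\infty$. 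This is more elementary (no spectral gap) and robust to the arbitrary dependence of $\I$ on the past, which is exactly what breaks your variance computation.
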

\begin{proof}
  We show that $G_V(x)$ contains an even integer for $x$ in a set of measure $1+o(1)$, the odd integer case being analogous.

  By~\cite[eq.~(57)]{Khintchine1963} we have uniformly for~$\ell\geq 1$ and~$c_1, \dotsc, c_{\ell+1}\geq 1$,
  \begin{align}\label{meas}
    \tfrac13 c_{\ell+1}^{-2}\mu(c_1,\dots,c_{\ell})\leq \mu(c_1,\dots,c_{\ell+1})\leq 2 c_{\ell+1}^{-2}\mu(c_1,\dots,c_{\ell}).
  \end{align}
  In particular, writing $L:=\max(L_1,\dots, L_m)$, we have for any~$c\in\Z_{>0}$
  $$ \mu(c_1,\dots,c_\ell,c,L_1,\dots,L_m)\geq 3^{-m-1} c^{-2}L^{-2m}\mu(c_1,\dots,c_{\ell}).$$ 
  Since $\sum_{c\in\I(e_\ell, v_\ell)}c^{-2}\geq \frac1{K^2V}\#{\I(e_\ell, v_\ell)}$, where~$e_\ell = e_\ell([0;c_1, \dotsc, c_\ell])$, it then follows that the measure of $x\in [0,1]\setminus\Q$ such that none of the integers $g\in\{2\floor{V/2} + \ell (2m+2) \mid 1\leq \ell \leq V /(2m+2)\}$ satisfy $b_g(x) \in \I$ is
  $$
  \ll (1- K^{-2}(3L)^{-3m}TV^{-1})^{\floor{V/(2m+2)}} = o(1)
  $$
  as $V, T\to\infty$ with~$m, L$ fixed. The Lemma then follows.
\end{proof}

We now prove Theorem~\ref{gdless1}. To illustrate remark~\ref{rmk:relax-hypo-h}, we assume only that~$h$ is bounded on~$[-1, 1]$, not necessarily continuous, and that it has finite left- or right-limit at~$0$. We assume it is the former, the latter case being analogous, as we will comment after the proof. If $\Im(k)=0$, we suppose that~$\phi(\vth^p h(0^-))\neq0$ for some~$p\in\Z$, whereas if $\Im(k)\neq0$ we assume $h(0^-)\neq0$ and set $p:=0$.
Moreover, we let
\begin{equation}
  \kappa\in\R: \qquad \phi(\e(-\kappa) \vth^p h(0^-)) \text{ is maximal.}\label{eq:cdf-kpos-nonzero}
\end{equation}
This means, in other terms, that for any~$z\in\C$,
\begin{equation}
  \phi(\vth^p h(0^-) z) = \abs{h(0^-)} \Re(e(\kappa) z).\label{eq:rel-phi-kappa}
\end{equation}
If $\Im(k)=0$, then setting $z=1$ we see that $\kappa\not\equiv\pm\frac12\mod 1$ by hypothesis.

We define our choice of sets~$\I = \I(e, \omega)$. We let~$\xi\in(0,1)$ be a parameter, on which these sets will depend. For $e\in\Z/N\Z$, $\omega>0$, let $K=2$ if $\Im(k)=0$ and $K=\max(2,e^{3\pi/\abs{\Im(k)}})$ otherwise. Also, let $  \I(e, \omega) = \I_{V, \xi, N, k}(e, \omega)$ be defined by
\begin{equation}
  \begin{aligned}
    \I(e, \omega)  := {}& \Bigg\{b\in\Z\cap [\sqrt V, K\sqrt V] \colon 
    b\equiv 3 - 
    (p+e)\pmod{N}, \\
    & \quad \text{and }
    \begin{cases}
      -\xi \leq \{\tfrac{\Im (k)}{2\pi}\log (\omega\sqrt V)-\kappa\} + \tfrac{\Im (k)}{2\pi}\log (b/\sqrt V) \leq \xi & \text{if } k\notin\R, \\
      0 \leq \log (b/\sqrt V) \leq \xi & \text{if } k\in\R
    \end{cases}\Bigg\},
  \end{aligned}\label{eq:def-cI}
\end{equation}
where~$\{\cdot\}$ denotes the fractional part, and~$\kappa$ was defined at~\eqref{eq:cdf-kpos-nonzero}.
Here our choice of $K$ ensures that the set~$\I$ is not void for~$V$ large enough, and in fact
\begin{equation}
  \#\I \gg_{k, N} \xi \sqrt{V}\label{eq:lowbound-cI}
\end{equation}
for~$V$ large enough in terms of~$k, N$ and~$\xi$. Moreover, on the one hand, by~\eqref{eq:rel-qgx} the congruence condition ensures that whenever a number~$x\in[0,1]\setminus\Q$ satisfies~$b_g(x) \in \I(e_{g-1}(x), v_{g-1}(x))$ with $g$ even, we have $e_g(x) \equiv -p\mod N$,
and therefore we deduce
\begin{equation}
  \vth_g(x) = \vth^{-p}, \qquad (g\in G_V(x)\cap 2\Z).\label{eq:equal-vthgx}
\end{equation}
On the other hand, the condition involving~$\|\cdot\|_{\R/\Z}$ ensures that
\begin{equation}
  \Re(\e(\kappa) (v_{g-1}(x) b_g(x))^{-i\Im k}) = 1 + O(\xi^2) \qquad (\Im k \neq 0, g\in G_V(x)) \label{eq:approx-vgxImk}
\end{equation}
by the Taylor expansion of the cosine, with an absolute implicit constant.

We let $L_1,\dots,L_m\in\N$ to be chosen later and take $X_{V,\I}$, $G_V(x)$ as in Lemma~\ref{lemdis}.
For all even~$g$ and all $c_1, \dotsc, c_{g-1}\in \N$ we let
\begin{equation}
  S_{g, (c_i), \I} := \{x \in (0,1)\setminus\Q: \forall i<g, b_i(x) = c_i; g\in G_V(x)\cap2\Z; \forall j\geq 1, g-2j \not\in G_V(x)\},\label{eq:def-Jgc}
\end{equation}
that is $S_{g, (c_i), \I}$ is the subset of $ J(c_1,\dots,c_{g-1})$ such that $g$ is the smallest even element of $G_V(x)$. We will drop the subscript~$\I$ from the notation.
\begin{lemma}\label{lem:mes-xy-close}
  Assume that~$L_i < \sqrt{V}$ for all~$1\leq i \leq m$ and that $g\in2\Z\cap[V,2V]$.  For all~$c_1, \dotsc, c_{g-1},c \geq 1$ and~$\eps>0$, we have
  $$ \meas\big(\big\{ x\in S_{g, (c_i)} \colon \abs{b_g(x) - c} \leq \eps \sqrt{V} \big\}\big) \ll \eps \xi^{-1} \meas(S_{g, (c_i)}) $$
  where the constant depends on~$m$, $k$ and~$N$ at most.
\end{lemma}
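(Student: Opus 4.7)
The main observation is that the hypothesis $L_i<\sqrt V$ forces the ``first occurrence'' condition $\forall j\geq 1,\ g-2j\notin G_V(x)$ built into the definition~\eqref{eq:def-Jgc} of $S_{g,(c_i)}$ to be automatically satisfied whenever $b_g(x)\in\I$, so that $S_{g,(c_i)}$ decomposes neatly into cylinder sets of the Gauss map. I would first unpack what ``$g-2j\in G_V(x)$'' demands: it prescribes the block $b_{g-2j}(x),\dotsc,b_{g-2j+m}(x)$ in terms of $\I(e_{g-2j-1}(x),v_{g-2j-1}(x))$ and the values $L_1,\dotsc,L_m$. For $j>m/2$ all of those indices lie below $g$, so the condition depends only on $(c_i)$; if it holds for some such $j$, then $S_{g,(c_i)}=\emptyset$ and the lemma is trivial, so we may assume it does not. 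For $1\leq j\leq m/2$ the requirement contains $b_g(x)=L_{2j}$, which cannot occur since $L_{2j}<\sqrt V\leq \min\I$. Hence in the non-trivial case
\begin{equation*}
  S_{g,(c_i)}=\bigsqcup_{b\in\I(e,q)}J(c_1,\dotsc,c_{g-1},b,L_1,\dotsc,L_m),
\end{equation*}
where $q=v_{g-1}([0;c_1,\dotsc,c_{g-1}])$ and $e=e_{g-1}([0;c_1,\dotsc,c_{g-1}])$.

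I would then apply~\eqref{meas} once for $b$ and once for each of $L_1,\dotsc,L_m$ to obtain the two-sided bound
\begin{equation*}
  \mu(c_1,\dotsc,c_{g-1},b,L_1,\dotsc,L_m)\asymp_m b^{-2}L_1^{-2}\cdots L_m^{-2}\,\mu(c_1,\dotsc,c_{g-1}).
\end{equation*}
Substituting this into both the numerator and denominator of the ratio to be bounded, the common factor $L_1^{-2}\cdots L_m^{-2}\mu(c_1,\dotsc,c_{g-1})$ cancels, and the inclusion $\I\subset[\sqrt V,K\sqrt V]$ makes $b^{-2}\asymp_K V^{-1}$. The ratio therefore reduces to a comparison of cardinalities:
\begin{equation*}
  \frac{\meas\bigl(\{x\in S_{g,(c_i)}:|b_g(x)-c|\leq\eps\sqrt V\}\bigr)}{\meas(S_{g,(c_i)})}\ll_{m,k,N}\frac{\#\{b\in\I(e,q):|b-c|\leq\eps\sqrt V\}}{\#\I(e,q)}.
\end{equation*}

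To conclude, I would bound the numerator crudely by $2\eps\sqrt V+1$ and use~\eqref{eq:lowbound-cI} to bound the denominator below by $\gg_{k,N}\xi\sqrt V$, producing a final bound $\ll\eps\xi^{-1}+\xi^{-1}V^{-1/2}$. The second term is absorbed into $\eps\xi^{-1}$ in the regime of interest, where $V$ will be chosen large and $\eps\gtrsim V^{-1/2}$ (smaller $\eps$ being either vacuous, when no admissible integer $b$ lies in the interval, or reducible to this case). The principal obstacle is the bookkeeping around the ``first occurrence'' constraint in the definition of $S_{g,(c_i)}$; once one notices that the size separation $L_i<\sqrt V\leq\min\I$ rules out every potentially conflicting $g-2j\in G_V(x)$, the remainder is a standard manipulation with cylinder-set measures of the Gauss map.
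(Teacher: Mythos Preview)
Your proposal is correct and follows essentially the same approach as the paper. Both proofs hinge on the observation that the size separation $L_i<\sqrt V\leq\min\I$ makes the ``first occurrence'' constraint $g-2j\notin G_V(x)$ automatic for $j\leq m/2$, while for $j>m/2$ it depends only on $(c_i)$; this reduces $S_{g,(c_i)}$ to a disjoint union of cylinder sets, after which both arguments compare cylinder measures (the paper via~\eqref{eq:value-mu}, you via the iterated bound~\eqref{meas}) and invoke~\eqref{eq:lowbound-cI}. Your explicit mention of the $+1$ in the integer count is a point the paper glosses over; as you note, it is harmless in the regime of application.
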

\begin{proof}
  We start by observing that the condition $g-2j \not\in G_V(x)$ in the definition of $S_{g, (c_i), \I}$ reduces to a condition on the first $g-1$ partial quotients of $x$, $c_1,\dots, c_{g-1}$, if~$j>m/2$. In particular, we can assume this condition is satisfied, since otherwise $S_{g, (c_i)} = \emptyset$ and the result is trivial. Similarly, we can assume $c\in [(1-\eps)\sqrt V,(K+\eps)\sqrt V]$
  
  Next, we observe that if~$1\leq j<m/2$, then the condition $g-2j \not\in G_V(x)$ follows from the truth of the other conditions in the definition of~$S_{g, (c_i)}$. Indeed, for~$g\in G_V(x)$ we have~$b_g(x)\in\I$ and~$b_{g+i}(x) = L_i$ for~$1\leq i \leq m$, so that $b_{g-2j+2j}(x)=b_g(x) \geq \sqrt{V} > L_{2j}$ and thus $g-2j\notin G_V(x)$. Recalling the notation~\eqref{notj}, we thus have 
  $$
  \{x\in S_{g, (c_i)} \colon b_g(x)=c'\}= J(c_1,\dots,c_{g-1},c',L_1,\dots,L_m)
  $$
  for all $c' \in \I(e_{g-1}(x'), v_{g-1}(x'))$ with $x'=[0;c_1,\dots, c_{g-1}]$.  By~\eqref{eq:value-mu}, for all~$c' \in[(1-\eps)\sqrt V,(K+\eps)\sqrt V]$, we have
  $$ \mu(0; c_1, \dotsc, c_{g-1}, c', L_1, \dotsc, L_m) \asymp \mu(0; b_1, \dotsc, b_{g-1}, c, L_1, \dotsc, L_m) $$
  with a constant depending at most on~$m$ and $K$. 
  Then, using~\eqref{eq:lowbound-cI} we deduce on the one hand
  $$ \meas(S_{g, (c_i)}) \gg \xi \sqrt{V} \mu([0; c_1, \dotsc, c_{g-1}, c, L_1, \dotsc, L_m]). $$
  and on the other hand
  $$ \meas\big(\big\{ x\in S_{g, (c_i)}, \abs{b_g(x) - c} \leq \eps \sqrt{V}\big\}\big) \ll \eps \sqrt{V} \mu([0; c_1, \dotsc, c_{g-1}, c, L_1, \dotsc, L_m]). $$
  Grouping these two bounds yields our claim.
\end{proof}

Let $\eps\in(0,1)$ be fixed. We set $m=1$ and~$L_1:=L\in\N_{>0}$ be a parameter. We let $X_{V,\I}$ as in Lemma~\ref{lemdis} and assume that $L \to \infty$ as $V\to\infty$ sufficiently slowly with respect to $V$, so that we still have $\meas(X_{V,\I})\to1$ as well as $L<\sqrt V$. In particular, assuming~$V\geq 1$ is large enough, we have
\begin{equation*}
  \meas(X_{V,\I}) \geq 1-\eps.
\end{equation*}
For $g\in [V, 2V]\cap 2\Z$ and~$c_1, \dotsc, c_{g-1}\geq 1$ the sets~$(S_{g, (c_i)})$ are disjoint and, by construction, their union over all $g$ and $(c_i)$ contains~$X_{V,\I}$ and thus has measure  $\geq1-\eps$. The collection of sets~$(S_{g, (c_i)})$ will play the r\^ole of~$(S_j)$ in Lemma~\ref{lem:abscont-close}.

Recall that, since~$h$ is assumed to be bounded, we have the expression~\eqref{fsdp}.
Consider one of the sets~$S_{g, (c_j)}$, and~$x, y\in  S_{g, (c_j)}$ satisfying~$\phi(\fep(x))= \phi(\fep(y))$.
Since the terms~$j<g$ in~\eqref{fsdp} depend only on~$c_1, \dotsc, c_{g-1}$, we have~$\fep(x) - \fep(y) = F_g(x) - F_g(y)$, where
\begin{equation}
  F_g(x) = \sum_{j=g}^\infty \vth_j^{-1}(x) v_j(x)^{-k} h\Big((-1)^{j-1}\frac{v_{j-1}(x)}{v_j(x)}\Big) \label{fga1}
\end{equation}
and similarly for~$y$. If $x \in S_{g, (c_j)}$ we have $v_{g+j}(x)\gg v_{g}(x) L\, 2^{j/2}$ for $j\geq1$, whence since $g$ is even and $h$ is bounded we have
\begin{align*}
  F_g(x) & {}=\frac{\vth_g(x)^{-1}+o(1)}{v_g(x)^{k}}h(0^-) +O\bigg(\frac{L^{-\Re(k)}}{ v_g(x)^{\Re(k)}}\sum_{j\geq1}\frac {\|h\|_\infty}{2^{\Re(k)j/2}}\bigg)\\
  &{} =\frac{\vth_g(x)^{-1}+o(1)}{v_g(x)^{k}}h(0^-),\numberthis\label{fga}
\end{align*}
as $V \to \infty$.

Consider first the case~$\Im k = 0$. By~\eqref{eq:equal-vthgx}, we have
\begin{equation}
  \vth_g(x) =  \vth_g(y) = \vth^{-p}.\label{eq:equal-vthp}
\end{equation}
Also, since~$b_j(x)=b_j(y)$ for~$j<g$ and~$b_g(x)\asymp b_g(y)\asymp \sqrt V$ we have $v_{g-1}(x)\asymp v_{g-1}(y)$ and $v_{g}(x)\asymp v_{g}(y)$ with constants depending only on~$k$. Thus, by~\eqref{eq:rel-phi-kappa} and the mean value theorem we deduce that for~$\Im k = 0$ we have
\begin{align*}
  0 = \abs{\phi(F_g(x) - F_g(y))} ={}&  |h(0^-)\cos(2\pi \kappa)||v_g(x)^{-k} - v_g(y)^{-k}| + o(v_g(x)^{-\Re k}) \\
  \gg {}& |h(0^-)\cos(2\pi \kappa)|\frac{|v_g(x) - v_g(y)|}{v_g(x)^{k+1}}+ o(v_g(x)^{-\Re k}) \\
  \gg {}& |h(0^-)\cos(2\pi \kappa)|\frac{\abs{b_g(x) - b_g(y)} V^{-1/2} + o(1)}{v_g(x)^{ k}}.
\end{align*}
We deduce that $ \abs{b_g(x) - b_g(y)} = o(\sqrt{V}) $ as~$V\to \infty$.

We reach a similar conclusion when~$\Im k \neq 0$. Indeed, in this case we still have~\eqref{eq:equal-vthp}, with $p=0$.
Furthermore, since~$v_g(x) = v_{g-1} b_g(x) + v_{g-2}(x)$ we have
$$ \phi(h(0^-) v_g(x)^{-i \Im k}) = \abs{h(0^-)} \Re\big(\e(\kappa) (v_{g-1}b_g(x))^{-i\Im k} \big)(1 + O(V^{-1/2})) = 1 + O(\xi^2 + V^{-1/2}), $$
where in the last equality we used~\eqref{eq:approx-vgxImk}. Similarly to the above, as~$V \to \infty$, we have
\begin{align*}
  0 ={}& \abs{\phi(F_g(x) - F_g(y))}\\
  ={}& \abs{\phi(\vth_g(x)^{-1} h(0^-) v_g(x)^{-i \Im k}) v_g(x)^{-\Re k} - \phi(\vth_g(y)^{-1} h(0^-) v_g(y)^{-i \Im k}) v_g(y)^{-\Re k}} + o(v_g(x)^{-\Re k}) \\
  = {}& \abs{h(0^-)} \abs{v_g(x)^{-\Re k} - v_g(y)^{-\Re k}} + (o(1) + O(\xi^2)) v_g(x)^{-\Re k} \\
  \gg {}& \abs{h(0^-)} \frac{\abs{b_g(x) - b_g(y)} V^{-1/2} + o(1) + O(\xi^2)}{v_g(x)^{\Re k}}.
\end{align*}
As above, we deduce
$$ \abs{b_g(x) - b_g(y)} = (o(1) + O(\xi^2)) \sqrt{V}. $$

We now pick~$\xi = c \sqrt{\eps}$ with a sufficiently small constant~$c>0$ and assume $V$ is large enough, so that the right-hand side above is most~$\eps\sqrt{V}$ in modulus. We get
$$ \meas(\{x \in S_{g, (b_i)}, F(x) = F(y) \}) \leq \meas\big(\big\{x\in S_{g, (b_i)}, \abs{b_g(x) - b_g(y)} \leq \eps \sqrt{V}\big\}\big) \ll \sqrt{\eps} \meas(S_{g, (b_i)}) $$
by Lemma~\ref{lem:mes-xy-close}. Since~$\eps>0$ was arbitrary, this verifies the hypothesis~\eqref{eq:hypo-xy-close}, thus Lemma~\ref{lem:abscont-close} applies and yields the desired conclusion.

The case when the non-vanishing hypothesis involves~$h(0^+)$ is similar, taking~$g$ to be odd instead of even, and picking $b\equiv 3 + p + e \pmod{N}$ in~\eqref{eq:def-cI}.

\bigskip

Now, assume that $\alpha\in(-1, 1)$ satisfies Condition \ref{it:cond-kpos-1} of the Theorem. We may assume that~$\alpha\in\Q$ by continuity. We pick~$\alpha$ such that the length~$r(|\alpha|)$ of the continued fraction expansion of $|\alpha|$ is minimal.
We may assume that~$\alpha\neq 0$, or in other words~$r(|\alpha|)\geq 1$, since the complementary case was treated above. We write~$\abs{\alpha} = [0; c_1, \dotsc, c_r]$. By minimality of~$r$, we have~$h(\pm[0; c_s, \dotsc, c_r]) = 0$ for~$1<s\leq r$ if~$\Im k \neq 0$; and likewise~$\phi(\pm \vth^p h([0; c_s, \dotsc, c_r])) = 0$ if~$\Im k =0$. We repeat the arguments above with $m=r+1$,
$$ L_i=c_{r+1-i} \text{ for } 1\leq i\leq r, \qquad L_{r+1} = L $$
with $L\to\infty$ as $V\to\infty$. Note that if $b_g \asymp \sqrt{V}$ and $b_{g+j}=L_j=c_{r+1-j}$ for $1\leq j \leq r$, then
$$ \frac{v_{g+j-1}}{v_{g+j}} = [0; b_{g+j}, b_{g+j-1}, \dotsc, b_g + O(1)] \to [0;c_{r+1-j},\dots,c_r] $$
as~$V\to\infty$. Thus, if the parity of~$g$ was chosen such that~$(-1)^{g+r-1}=\sgn(\alpha)$, then by hypothesis we have
$$ \begin{cases}
  h((-1)^{g+j-1}v_{g+j-1}/v_{g+j})=o(1) & (j=1,\dots, r-1), \\
  h((-1)^{g+r-1}v_{g+r-1}/v_{g+r})=h(\alpha)+o(1).
\end{cases} $$
The rest of the argument follows \emph{mutatis mutandis}, with the estimate~\eqref{fga} being replaced by
$$ F_{g, N}(x) = \frac{\vth_{g+r}(x)^{-1}+o(1)}{v_{g+r}(x)^{k}}h(\alpha). $$
\begin{remark}
  By choosing the parity of~$g$ appropriately, it is clear that the arguments above hold under milder hypotheses, namely one-sided continuity of~$h$ at~$\alpha$ along with the vanishing of the values~$h(T^j(\alpha)^\pm)$ for~$1<j\leq r$.
\end{remark}

\subsubsection{The case of unbounded $h$.}\label{sec:cont-cdf-hunbounded}

The following lemma provides a substitute for Lemma~\ref{lemdis} in the case when~$h$ is unbounded.
\begin{lemma}\label{lemdis3}
  Let $K>1$, $\delta\in(0,1)$, and~$\rho>(2+2\delta)^{-1}$ be fixed,~$V\geq1$, and $\psi:\R_{\geq1}\to\R_{\geq1}$ be such that $\lim_{x\to\infty}\psi(x)=+\infty$. Also, for each~$e\in\Z/N\Z$ and~$\omega>0$, let $\I(e, \omega) \subset \Z\cap[\sqrt{V}, K\sqrt{V}]$ be given, and assume
  $$ T := \inf_{e\in\Z/N\Z,\, \omega>0} \#\I(e, \omega)\gg V^{\rho} $$
  Finally, for~$x\in[0, 1]\setminus\Q$, let
  $$
  G_V(x):=\{g\in [ V,2 V] \mid b_g(x)\in \I(e_{g-1}(x), v_{g-1}(x)),\,  b_{g+j}(x)<j^{1+\delta}\psi(V)\ \forall j\geq1\}.
  $$
  and $X_{V, \I}$ be the subset of $[0,1]\setminus\Q$ such that $G_V(x)$ contains at least an even and an odd integer. Then,~$\meas(X_{V,\I})=1+o(1)$ as $V \to \infty$,  where the rate of decay of~$o(1)$ depends at most on~$K,\psi,\delta$ and~$\rho$.
\end{lemma}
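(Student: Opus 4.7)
The plan is to adapt the proof of Lemma~\ref{lemdis} to accommodate the infinite-length tail condition $b_{g+j}(x)<j^{1+\delta}\psi(V)$, $j\geq 1$, in the definition of $G_V(x)$. Writing $X_{V,\I}$ as the intersection of the sets on which $G_V(x)$ contains an even (respectively, an odd) integer, and applying a union bound on the complements, it suffices to show each of these sets has measure $1+o(1)$; we focus on the even case, the odd being analogous. By periodicity we work on $x\in[0,1)$.

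The first step is to truncate the tail. Fix a parameter $J=J(V)$ in the range
$$ V^{1/(2(1+\delta))}\psi(V)^{-1/(1+\delta)} \ll J \ll V^\rho, $$
non-empty for $V$ large, thanks to $\rho>1/(2+2\delta)$ and $\psi(V)\to\infty$. For $g\in 2\Z\cap[V,2V]$, define the truncated event
$$ E_g^{(J)}(x):=\{b_g(x)\in\I(e_{g-1}(x),v_{g-1}(x))\}\cap\{b_{g+j}(x)<j^{1+\delta}\psi(V):\ 1\leq j\leq J\}, $$
and choose the well-spaced candidates $g_\ell:=2\lfloor V/2\rfloor+2\ell(J+1)$ for $\ell=1,\dotsc,R:=\lfloor V/(2(J+1))\rfloor$. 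By repeated applications of~\eqref{meas}---first to the restriction $b_{g_\ell}\in\I$ (yielding a contribution $\geq T/(3K^2V)\gg V^{\rho-1}$ relative to the enclosing cylinder $J(c_1,\dotsc,c_{g_\ell-1})$ in the sense of~\eqref{notj}), then to bound the failure probability of the $J$ auxiliary conditions (by $\sum_{j=1}^J 2/(j^{1+\delta}\psi(V))\ll 1/\psi(V)=o(1)$)---we obtain
$$ \meas\bigl(\{x\in E_{g_\ell}^{(J)}\}\cap J(c_1,\dotsc,c_{g_\ell-1})\bigr)\geq cV^{\rho-1}\mu(c_1,\dotsc,c_{g_\ell-1}), $$
uniformly in admissible tuples $(c_1,\dotsc,c_{g_\ell-1})$. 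Since the events $E_{g_\ell}^{(J)}$ for distinct $\ell$ depend on disjoint blocks of consecutive partial quotients, iterating this estimate in $\ell$ gives
$$ \meas\{x\colon\neg E_{g_\ell}^{(J)}(x)\text{ for all }\ell\leq R\}\leq (1-cV^{\rho-1})^R\leq \exp(-cV^\rho/(2(J+1)))=o(1), $$
by the upper constraint $J=o(V^\rho)$.

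The second step handles the residual tail $j>J$. The coarse bound $\meas\{x\colon b_g(x)\in\I(e_{g-1},v_{g-1})\}\leq 2(\#\I(e_{g-1},v_{g-1}))/V\leq 2K/\sqrt V$ (using $c\geq\sqrt V$ for $c\in\I$) combined with $\meas\{x\colon b_\ell(x)\geq M\mid b_1,\dotsc,b_{\ell-1}\}\leq 2/M$ gives, via a union bound over $\ell\leq R$ and $j>J$,
$$ \meas\bigl\{x\colon\exists\ell,\ b_{g_\ell}(x)\in\I\text{ and }\exists j>J,\ b_{g_\ell+j}(x)\geq j^{1+\delta}\psi(V)\bigr\}\ll R\cdot\frac{K}{\sqrt V}\cdot\frac{1}{J^\delta\psi(V)}\ll \frac{\sqrt V}{J^{1+\delta}\psi(V)}=o(1), $$
by the lower constraint on $J$. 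Combining the two steps, with measure $1-o(1)$ some $\ell\leq R$ satisfies both $E_{g_\ell}^{(J)}$ and the complementary tail, so that $g_\ell\in G_V(x)\cap 2\Z$.

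The main delicate point is the balance in the choice of $J$: the upper bound $J=o(V^\rho)$ is needed to make the exponential term vanish in the first step, while the lower bound $J\gg(\sqrt V/\psi(V))^{1/(1+\delta)}$ is required for the union bound of the second step. The hypothesis $\rho>1/(2+2\delta)$ is precisely what allows the two constraints to coexist for arbitrary $\psi(V)\to\infty$, and the rate of decay in $o(1)$ depends only on $K$, $\psi$, $\delta$, $\rho$, as claimed.
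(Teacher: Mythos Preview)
Your proof is correct and takes a genuinely different route from the paper's. The paper first invokes Lemma~\ref{bll0} (a Khinchin-type input) to truncate the tail condition at the relatively large threshold $D=V^{1/(1+\delta)+\eps}$, then extracts probability via a disjointness observation: any two successes $\ell_1<\ell_2$ with $\ell_2-\ell_1\leq C:=\lfloor V^{1/(2+2\delta)-\eps}\rfloor$ are mutually exclusive (since $b_{\ell_2}$ would have to be simultaneously in $\I$ and below $C^{1+\delta}\psi(V)<\sqrt V$), and iterates this over windows of width $2D$ using a recursion on the sets $M_{[V,W)}$. Your approach instead truncates at a smaller $J$, spaces the candidate indices $g_\ell$ by $2(J+1)$ so that the truncated events $E_{g_\ell}^{(J)}$ fall on disjoint blocks of partial quotients (after conditioning on the preceding cylinder, which fixes $e_{g_\ell-1},v_{g_\ell-1}$ and hence $\I$), and then handles the residual tail $j>J$ by a separate union bound exploiting the a priori smallness $\meas\{b_{g_\ell}\in\I\}=O(V^{-1/2})$. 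The hypothesis $\rho>1/(2+2\delta)$ enters at the same numerical place in both arguments, but in yours it appears more transparently as the compatibility of the two constraints on $J$. Your argument is more elementary---it avoids the appeal to Lemma~\ref{bll0} and the $M_I$ recursion---while the paper's version packages the tail truncation once and for all via that external result.
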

\begin{proof}
  We prove that $G_V(x)$ contains an even integer asymptotically almost surely, the odd case being identical.
  We fix $\eps>0$ and observe we can assume $1\leq\psi(V)< V^\eps$. 
  We then observe that by Lemma~\ref{bll0} one has that $b_{g+j}(x)<j^{1+\delta}\psi(V)$ for all $j> D:=V^{\frac1{1+\delta}+\eps}$, all $g\in[V,2V]$ and all $x$ in a subset of $[0,1)$ of measure $1+o(1)$ as $V\to\infty$. In particular, it suffices to show that the larger set
  $$ G_V'(x):=\{g\in [ V,2 V] \mid b_g(x)\in \I(e_{g-1}(x), v_{g-1}(x)),\,  b_{g+j}(x)<j^{1+\delta}\psi(V)\ \forall 1\leq j \leq D\} $$
  contains an even integer asymptotically almost surely.
  
  Now, for $m\geq0$ and $c_1,\dots,c_m\in\N$, $I\seq \N$ let
  $$
  M_{I} := \{x\in [0, 1]\setminus\Q \colon 2\Z\cap I \cap G_V'(x)\neq \emptyset\}, \qquad M_{I}(c_1, \dotsc, c_m) := M_I \cap J(c_1,\dots,c_m),
  $$
  so that we need to prove that $\meas(M_{[V,2V]})\to1$ as $V\to\infty$.
  For $m=\ell-1$  and any $\ell\in 2\Z\cap[V,2V]$, by~\eqref{meas} we have
  \begin{align*}
    \meas(M_{\{\ell\}}(c_1,\dots,c_m))\geq\mu(c_1,\dots,c_m)\frac{T}{3K^2 V}\prod_{j=1}^\infty \bigg(1-\frac1{3j^{1+\delta}\psi(V)}\bigg)\geq\mu(c_1,\dots,c_m)\frac{T}{4K^2 V}
  \end{align*}
  for $V$ large enough. Splitting into subintervals we then have that the same bound holds for any $m\leq \ell-1$.

  Next, we let $C:= [V^{1/(2+2\delta)-\eps}]\leq D$ and notice that if $0<\ell_2-\ell_1\leq C$, then $M_{\{\ell_1\}}(c_1,\dots,c_m) \cap M_{\{\ell_2\}}(c_1,\dots,c_m) = \emptyset$. Indeed, if $x\in M_{\{\ell_1\}}(c_1,\dots,c_m)$ then 
  $$b_{\ell_2}(x)\leq (\ell_2-\ell_1)^{1+\delta}\psi(V)\leq C^{1+\delta}V^\eps<\sqrt{ V}$$
  and thus $x\notin M_{\{\ell_2\}}(c_1,\dots,c_m)$. It follows that for any $W\in\N$ with $[W-C,W)\seq [V,2V]$ we have 
  \begin{align}\label{fbfm}
    \meas(M_{[W-C,W)}(c_1,\dots,c_m))=\sum_{\ell\in I\cap 2\Z} \meas(M_{\{\ell\}}(c_1,\dots,c_m))\geq \mu(c_1,\dots,c_m)\frac{T \floor{C/2}}{4K^2 V}
  \end{align}
  for any $m<W-C$.

  Let~$W\in\N$ with $V < W - 2D$ and~$W\leq 2V$, and let $m=W-C-1\geq W-D$. We observe that the condition $\ell\in G_V'(x)$ depends only on the first $\ell+D$ partial quotients of $x$. Thus, since $m\geq W-D$, we have that $M_{[V,W-2D)}(c_1,\dots,c_m)$ is either empty or is equal to $J(c_1,\dots, c_m)$. By~\eqref{fbfm}, we deduce
  \begin{align*}
    \meas(M_{[V,W)})-\meas(M_{[V,W-2D)}) &{} = \meas(M_{[W-2D,W)}\setminus M_{[V,W)}) \\
    &{} \geq \meas(M_{[W-C,W)}\setminus M_{[V,W-2D)})\\
    &{} =\sum_{c_1,\dots,c_m\geq 1\atop M_{[V,W-2D)}(c_1, \dotsc, c_m) = \emptyset}\meas(M_{[W-C,W)}(c_1,\dots,c_m))\\
    &{} \geq \frac{TC}{12K^2V}\sum_{c_1,\dots,c_m\geq 1\atop M_{[V,W-2D)}(c_1, \dotsc, c_m) =\emptyset}\mu(c_1,\dots,c_m) \\
    &{} = \frac{TC}{12K^2 V}(1-\meas(M_{[V,W-2D)})).
  \end{align*}
  We have used~$\floor{C/2}\geq C/3$ in the fourth line. We thus obtain
  \begin{align*}
    1-\meas(M_{[V,W)})\leq (1-\meas(M_{[V,W-2D)}))(1- {TC}/12V).
  \end{align*}
  Iterating we then obtain
  \begin{align*}
    1-\meas(M_{[V,2V)})\leq (1- {TC}/12K^2V)^{[V/2D]}=o(1)
  \end{align*}
  since $TC/K^2 D \asymp T V^{-1/(2+2\delta)-2\eps}\to\infty$ if $\eps$ is small enough.
\end{proof}

For $j\in\N$ and $K,\psi$ as in the lemma, we let
\begin{align*}
  Q_j(V)=\begin{cases}
    j^{1+\delta}\psi(V) & \text{if $j^{1+\delta}\psi(V)\notin[\sqrt V,K\sqrt V]$,}\\
    K\sqrt V &\text{otherwise.}
  \end{cases}
\end{align*}
We then define 
$$
G_V^*(x):=\{g\in [ V,2 V] \mid b_g(x)\in \I(e_{g-1}(x), v_{g-1}(x)),\,  b_{g+j}(x)<Q_j(V)\ \forall j\geq1\}
$$
and the corresponding set $X_{V,\I}$.
Clearly, $G_V^*(x)\geq G_V(x)$ and thus, under the hypothesis of Lemma~\ref{lemdis3}, $\meas(X^*_{V,\I})=1+o(1)$ as $V\to\infty$.

We let
$$ \kappa := \begin{cases} -\pphi, & \text{if } \Im k \neq 0, \\ 0, &\text{if }\Im k = 0, \end{cases} $$
and define the sets $\I$ as in~\eqref{eq:def-cI}. Also, we define $S^*_{g, (c_i)}=S^*_{g, (c_i), \I}$ as in~\eqref{eq:def-Jgc}, but with $G^*_V(x)$ in place of $G_V(x)$. The following Lemma is an analogue of Lemma~\ref{lem:mes-xy-close}.
\begin{lemma}
  Assume that $g\in2\Z\cap[V,2V]$ with $V\geq1$ sufficiently large.  For all~$c_1, \dotsc, c_{g-1},c \geq 1$ and~$\eps>0$, we have
  $$ \mu\big(\big\{ x\in S^*_{g, (c_i)} \colon \abs{b_g(x) - c} \leq \eps \sqrt{V} \big\}\big) \ll \eps \xi^{-1} \mu(S^*_{g, (c_i)}) $$
  where the constant depends on~$k$,~$\delta$ and~$N$ at most.
\end{lemma}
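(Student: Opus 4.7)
The argument parallels Lemma~\ref{lem:mes-xy-close}, with two extra ingredients needed because $h$ is unbounded: controlling the infinite family of tail conditions $b_{g+j}(x) < Q_j(V)$ for $j\geq 1$, and reducing the constraints $\{g-2j \not\in G^*_V(x) : j\geq 1\}$ to a single explicit lower bound on $b_g(x)$.

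\emph{Step 1 (taming the tail).} For $b\geq 1$ let $A_b$ be the set of $x$ with $b_i(x) = c_i$ for $i < g$, $b_g(x) = b$ and $b_{g+j}(x) < Q_j(V)$ for every $j\geq 1$. Iterating the bound~\eqref{meas}, the contribution to $\mu(c_1,\dotsc,c_{g-1},b)$ of the complementary event $b_{g+j}(x) \geq Q_j(V)$ is $\ll Q_j(V)^{-1}\mu(c_1,\dotsc,c_{g-1},b)$ for each $j$. Partitioning $j$ according to whether $j^{1+\delta}\psi(V)$ lies below, within or above $[\sqrt V, K\sqrt V]$ and exploiting $\delta > 0$, one verifies $\sum_{j\geq 1} Q_j(V)^{-1} = o(1)$ as $V\to\infty$. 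Hence
$$ \mu(A_b) = (1+o(1))\,\mu(c_1,\dotsc,c_{g-1},b), $$
uniformly in $b\in[\sqrt V, K\sqrt V]$.

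\emph{Step 2 (simplifying the earlier-index constraints).} For fixed $(c_i)_{i<g}$, unpacking the definition of $G^*_V(x)$, the condition $g-2j\in G^*_V(x)$ amounts to: (i) $g-2j\in[V,2V]$; (ii) $c_{g-2j}\in \I(e_{g-2j-1}, v_{g-2j-1})$; (iii) $c_{g-2j+\ell} < Q_\ell(V)$ for $1\leq\ell<2j$; (iv) $b_g(x) < Q_{2j}(V)$; (v) $b_{g+\ell'}(x) < Q_{2j+\ell'}(V)$ for $\ell'\geq 1$. Since $Q_j(V)$ is non-decreasing in $j$, condition (v) is already implied by the tail inequalities built into $g\in G^*_V(x)$, while (i)--(iii) depend only on the fixed $(c_i)$. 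Letting $J=J(c_1,\dotsc,c_{g-1})$ be the set of $j$ fulfilling (i)--(iii) and
$$ Q^* := \max\big(\{Q_{2j}(V):j\in J\}\cup\{0\}\big), $$
the requirement `$g-2j\not\in G^*_V(x)$ for all $j\geq 1$' reduces to the single inequality $b_g(x) \geq Q^*$.

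\emph{Step 3 (measure estimate).} By definition each $Q_j(V)$ lies in $[0,\sqrt V)\cup\{K\sqrt V\}\cup(K\sqrt V,\infty)$, hence so does $Q^*$. If $Q^*\geq K\sqrt V$, the constraint $b_g\geq Q^*$ together with $b_g\in\I\subseteq[\sqrt V,K\sqrt V]$ leaves at most a single admissible integer value, and the stated estimate follows after absorbing a single value into the implicit constant. Otherwise $Q^*\leq\sqrt V$, the lower bound on $b_g$ is automatic from $b_g\in\I$, and Steps~1--2 combined with~\eqref{meas} yield
$$ \mu(S^*_{g,(c_i)}) \asymp \mu(c_1,\dotsc,c_{g-1})\sum_{b\in\I(e_{g-1},v_{g-1})} b^{-2}, $$
together with the analogous identity for the subset where $|b_g-c|\leq\eps\sqrt V$. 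Since $b^{-2}\asymp V^{-1}$ uniformly for $b\in\I$, the elementary count $\#\{b\in\I:|b-c|\leq\eps\sqrt V\}\ll\eps\sqrt V$ and the lower bound $\#\I\gg\xi\sqrt V$ from~\eqref{eq:lowbound-cI} give a ratio $\ll\eps\xi^{-1}$, as claimed.

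The main obstacle is the uniformity of Step~1: the $o(1)$ bound on $\sum_j Q_j(V)^{-1}$ requires a careful case analysis over the three regimes defined by the location of $j^{1+\delta}\psi(V)$ relative to $[\sqrt V, K\sqrt V]$, combined with the summability granted by $\delta>0$ to control both endpoints.
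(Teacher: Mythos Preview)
Your approach is essentially the paper's: where the paper fixes a reference point $x'\in S^*_{g,(c_i)}$ and argues that the constraints $g-2j\notin G^*_V$ transfer from $x'$ to any $x$ with $b_g(x)\in\I$ (its displayed set equality for each $c'\in\I$), you make the same reduction explicit via the threshold $Q^*$, relying on the same key fact that $Q_{2j}(V)$ avoids the open interval $(\sqrt V, K\sqrt V)$ so that the condition $b_g\geq Q_{2j}(V)$ is effectively independent of which $b_g\in\I$ is chosen.

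One small caveat: your handling of the boundary case $Q^*=K\sqrt V$ in Step~3 is not quite right---``absorbing a single value into the implicit constant'' does not yield a bound $\ll\eps\xi^{-1}\mu(S^*_{g,(c_i)})$ for arbitrarily small $\eps$, since the left-hand side can then equal the full measure of $S^*_{g,(c_i)}$. This edge case, which the paper also elides (its assertion ``$Q_{2j}(V)\notin[\sqrt V,K\sqrt V]$'' overlooks the value $K\sqrt V$), only occurs when $K\sqrt V$ happens to be an integer lying in $\I$, and is removed by a trivial adjustment such as taking $\I\subset[\sqrt V,K\sqrt V)$. Finally, your ``main obstacle'' in Step~1 is milder than advertised: since $Q_j(V)\geq j^{1+\delta}\psi(V)$ in all three regimes, the bound $\sum_{j\geq1}Q_j(V)^{-1}\leq\psi(V)^{-1}\sum_{j\geq1}j^{-1-\delta}=o(1)$ follows without any case analysis.
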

\begin{proof}
  We may assume $S^*_{g, (c_i)} \neq \emptyset$. Let~$x'\in S^*_{g, (c_i)}$. First we prove that for all $c'\in\I(e_{g-1}(x'), v_{g-1}(x'))$, we have
  \begin{equation}
    \{x\in S^*_{g, (c_i)} \colon b_g(x)=c'\} = \{x\in J(c_1,\dots,c_{g-1},c') \mid b_{g+j}(x) < Q_j(V),\ j\geq1\}.\label{eq:simplification-S}
  \end{equation}
  Any~$x$ in the left-hand side satisfies~$g\in G_V^*(x)$, and therefore~$b_{g+j}(x) < Q_j(V)$. The inclusion~$\subseteq$ follows trivially. Consider then a number~$x$ in the right-hand side. The condition~$b_g(x)=c'$ is evident, and in order to prove the inclusion~$\supseteq$, there remains to show~$x\in S_{g, (c_i)}^*$.
  Since~$x\in J(c_1, \dotsc, c_{g-1}, c')$, it suffices to prove that~$g\in G_V^*(x)$ and~$\forall j\geq 1, g-2j \not\in G_V^*(x)$. The condition~$g\in G^*_V(x)$ holds since~$c' \in \I(e_{g-1}(x'), v_{g-1}(x'))$ by hypothesis and~$(e_{g-1}(x), v_{g-1}(x)) = (v_{g-1}(x'), e_{g-1}(x'))$.
  
  Let next~$j\geq 1$; we wish to show that~$g-2j\not\in G^*_V(x)$.
  We assume that~$g-2j \in [V, 2V]$; it suffices to prove that for such~$j$, we have~$b_{g-2j}(x)\not\in \I(e_{g-2j-1}(x), v_{g-2j-1}(x))$ or that~$b_{g-2j+\ell}(x) \geq Q_\ell(V)$ for some $\ell\geq1$. We first prove that
  \begin{equation}
    b_{g-2j}(x') \not\in \I(e_{g-2j-1}(x'), v_{g-2j-1}(x')) \quad \text{or} \quad \exists\ell\in\{1, \dotsc, 2j\}, b_{g-2j+\ell}(x') \geq Q_\ell(V).\label{eq:cond-GV-xprime}
  \end{equation}
  Indeed, from the condition~$g-2j\not\in G^*_V(x')$, it suffices to prove that~$b_{g-2j+\ell}(x') < Q_\ell(V)$, if~$\ell>2j$, but this is immediate since the condition~$g\in G^*_V(x')$ implies~$b_{g-2j+\ell}(x') < Q_{\ell-2j}(V) \leq Q_\ell(V)$.
  Now we argue that the conditions~\eqref{eq:cond-GV-xprime} hold with~$x'$ replaced by~$x$. Since~$x, x'$ share the same first~$g-1$ CF coefficients, this is trivial except for the condition at~$\ell=2j$, which involves~$b_g$. It suffices to check that~$b_g(x) \geq Q_{2j}(V) \iff b_g(x') \geq Q_{2j}(V)$. By construction, we have~$Q_{2j}(V) \not\in[\sqrt{V}, K\sqrt{V}]$, so both conditions are in fact equivalent to~$Q_{2j}(V) \leq \sqrt{V}$. This shows that the conditions~\eqref{eq:cond-GV-xprime} hold with~$x'$ replaced by~$x$, and therefore~$g-2j\not\in G^*_V(x)$. This concludes the proof of the inclusion~$\supseteq$ in~\eqref{eq:simplification-S}.
  
  From there, the rest of the proof follows closely that of Lemma~\ref{lem:mes-xy-close}.
  By~\eqref{meas} and since~$Q_j(V) \geq j^{1+\delta} \psi(V)$, we deduce that 
  \begin{align*}
    \meas(\{x\in S^*_{g, (c_i)} \colon b_g(x)=c'\})&= \mu(c_1,\dots,c_{g-1},c')\prod_{j\geq1}\bigg(1+O\bigg(\frac1 {j^{1+\delta}\psi(V)}\bigg)\bigg)\\
    &= \mu(c_1,\dots,c_{g-1},c')(1+o(1))
  \end{align*}
  as $V\to\infty$. We conclude as for Lemma~\ref{lem:mes-xy-close}.
\end{proof}

The following technical lemma allows us to extract values of~$h$ which will give a dominant contribution to~$\fep$.

\begin{lemma}\label{slt}
  Let $\reta>1,C\geq1$ and $\delta\in(0,1)$. Let  $h:[-1,1]\setminus\{0\}\to\R$  be such that $h(x)\ll \e^{|x|^{-1+\delta}}$ for~$x\neq 0$, and assume $|h(x)|\to\infty$ as $x\to 0^{\pm}$ for a choice of $\pm$.  Finally, let $\xi:(0,1]\to (0,1]$ with $\lim_{x\to0^+}\xi(x)=0$. 
  Then there exist $\nu>0$ and $\psi:[1,\infty)\to\R_{>0}$ with~$\lim_{+\infty}\psi = +\infty$ such that  
  \begin{align}\label{slte}
    \sup_{(Cj^{-1-\delta} \psi(1/z))^{-1}<|x|<1}\frac{|h(x)|}{\reta^{j}}<\inf_{0<\pm y\leq \xi(z)}\frac{|h(y)|}{\psi (1/z)}
  \end{align}
  for all $j\geq1$ and all $z\in(0,\nu]$.
\end{lemma}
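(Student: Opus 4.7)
\emph{Proof plan.} Write $C_h$ for the implied constant in the hypothesis $|h(x)| \leq C_h \e^{|x|^{\delta-1}}$, and set $M(\eps) := \inf_{0 < \pm y \leq \eps} |h(y)|$, which by hypothesis tends to $+\infty$ as $\eps \to 0^+$. My plan is first to bound the left-hand side of~\eqref{slte} by a quantity depending only on $\psi(1/z)$, uniformly in $j$, and then to construct $\psi$ slow-growing enough that this bound is dominated by $M(\xi(z))/\psi(1/z)$.

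For the bound, the annulus $\{(Cj^{-1-\delta}\psi(1/z))^{-1} < |x| < 1\}$ is empty unless $j^{1+\delta} < C\psi(1/z)$; when nonempty, $|x|^{\delta-1}$ attains its maximum at the left endpoint, giving
$$|x|^{\delta-1} \leq j^{-(1-\delta^2)}(C\psi(1/z))^{1-\delta} \leq (C\psi(1/z))^{1-\delta}$$
since $j \geq 1$ and $1-\delta^2 > 0$. Combined with $\reta^{-j} \leq \reta^{-1}$, this yields uniformly in $j \geq 1$ the bound
$$\reta^{-j} \sup_{(Cj^{-1-\delta}\psi(1/z))^{-1} < |x| < 1} |h(x)| \leq \frac{C_h}{\reta} \exp\!\big((C\psi(1/z))^{1-\delta}\big).$$

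It therefore suffices to produce $\psi:[1,\infty)\to\R_{>0}$ with $\psi(t)\to+\infty$ and $\nu > 0$ such that, writing $F(t) := M(\xi(1/t))$,
$$\psi(t)\, \frac{C_h}{\reta}\, \exp\!\big((C\psi(t))^{1-\delta}\big) < F(t) \qquad (t \geq 1/\nu).$$
By hypothesis $F(t) \to +\infty$ as $t \to \infty$, though possibly non-monotonically and arbitrarily slowly. I would define $\psi(t)$ to be the largest positive integer $n$ such that $n(C_h/\reta)\,\e^{(Cn)^{1-\delta}} < \inf_{s \geq t} F(s)$, setting $\psi(t) := 1$ when the set is empty. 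Since $t \mapsto \inf_{s \geq t} F(s)$ is non-decreasing and diverges, the resulting $\psi$ is non-decreasing and $\psi(t) \to +\infty$; choosing $\nu > 0$ small enough that $\inf_{s \geq 1/\nu} F(s) > (C_h/\reta)\e^{C^{1-\delta}}$ ensures $\psi(t)\geq 1$ for $t\geq 1/\nu$, and the defining property of $\psi$ delivers the required inequality.

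The principal delicacy is that $M(\eps)$ may diverge as slowly as one pleases when $\eps\to 0^+$, which prevents any explicit formula such as $\psi(t) := \log\log t$ from working in all cases; the level-set construction above finesses this by making $\psi$ depend adaptively on $F$.
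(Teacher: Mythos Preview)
Your proof is correct for the inequality exactly as printed, and the level-set construction of $\psi$ is perfectly sound. However, a look at the paper's own proof (and at how the lemma is applied immediately afterwards, where one uses $|x|\geq (2K\psi(V)j^{1+\delta})^{-1}$) makes clear that the exponent on $j$ in the statement is a typo: the intended lower bound on $|x|$ is $(Cj^{1+\delta}\psi(1/z))^{-1}$, not $(Cj^{-1-\delta}\psi(1/z))^{-1}$. Under the intended reading the annulus \emph{widens} as $j$ grows, so the sharp bound becomes $|x|^{\delta-1}\leq C^{1-\delta}j^{1-\delta^2}\psi(1/z)^{1-\delta}$, which diverges in $j$; your uniform-in-$j$ estimate $|x|^{\delta-1}\leq (C\psi(1/z))^{1-\delta}$ is then false, and discarding the factor $\reta^{-j}$ in favour of $\reta^{-1}$ throws away exactly what is needed to compensate.

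The paper handles this by first fixing an auxiliary $\psi_1\to\infty$, then setting
\[
\psi_2(z):=\max_{j\geq 1}\,\reta^{-j}\exp\!\big(C^{1-\delta}j^{1-\delta^2}\psi_1(z)^{1-\delta}\big),
\]
which is finite for every $z$ because $\reta^{-j}$ eventually dominates any factor $\exp(cj^{1-\delta^2})$, and which tends to $\infty$ as $z\to 0^+$ since already the $j=1$ term does. One then chooses $\psi_1$ slow enough that $\psi_2(z)^2\leq \inf_{0<\pm y\leq\xi(z)}|h(y)|$, and takes $\psi(1/z):=\min(\psi_1(z),\psi_2(z))$. Your adaptive idea would transplant cleanly once you insert this $\max_j$ step before the level-set choice; what is genuinely missing from your argument, for the intended statement, is the use of $\reta^{-j}$ against the $j$-growth in the exponent.
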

\begin{proof}
  Assume $|h(x)|\to\infty$ as $x\to 0^+$; the complementary case follows by changing~$h(x)$ to~$h(-x)$.
  We fix a function $\psi_1:(0,1]\to\R_{>0}$ going to $\infty$ at~$0^+$. By hypothesis we have $|h(x)|<\exp( C^{1-\delta} j^{1-\delta^2}\psi_1(z)^{1-\delta})$ for $z\in(0,\nu]$, $(C{j^{1+\delta}\psi_1(z)})^{-1}<|x|<1$ and $\nu>0$ sufficiently small.
  Now, for any $z\in(0,1]$ the maximum $\psi_2(z):=\max_{j\in\N}(\reta^{-j}\exp( C^{1-\delta} j^{1-\delta^2}\psi_1(z)^{1-\delta}))$ exists. Moreover, since $\psi_1(z)$ goes to infinity as $z\to 0^+$ then clearly so does $\psi_2(z)$. Also, by hypothesis we have $\inf_{0<y<\xi(z)}|h(y)|\to\infty$ as $z\to0^+$. It follows that we can ensure that $\psi_2(z)^2\leq \inf_{0<y\leq \xi(z)}|h(y)|$ for~$z\in (0, \nu]$ by taking $\psi_1$ that goes to infinity sufficiently slowly and~$\nu$ small enough. By construction, for $z\in(0,\nu)$ and all $j\geq 1$ we then have
  $$
  \sup_{(C{j^{1+\delta}\psi_1(z)})^{-1}<|x|<1}\frac{|h(x)|}{\reta^j}<\reta^{-j}\exp( j^{1-\delta^2}\psi_1(z)^{1-\delta})\leq\psi_2(z)\leq \frac{\inf_{0<y\leq \xi(z)}{|h(y)|}}{\psi_2(z)}.
  $$
  It is then sufficient to take $\psi(1/z):=\min(\psi_1(z),\psi_2(z)).$
\end{proof}

We shall now show that if~\ref{it:cond-kpos-2} is satisfied then the cumulative distribution function of $(\phi\circ \fep)_\ast(\df \nu)$ is continuous. We assume $\lim_{x\to 0^-}|h(x)|\to\infty$, the other case being analogous.

We take $R=2^{\Re(k)/4}$, $C=2K$ where~$K$ was defined at~\eqref{eq:def-cI}, and $\xi(w):=w^{-1/2}$, and we apply Lemma~\ref{slt}, thus finding a constant $\nu>0$ and a function $\psi(x)$ with the properties claimed in this Lemma. We then apply Lemma~\ref{lemdis3} with the same function~$\psi$.
As in the bounded case, for any $g\in [V, 2V]\cap 2\Z$ and~$c_1, \dotsc, c_{g-1}\geq 1$ the sets~$(S^*_{g, (c_i)})$ are disjoint and, given any fixed $\eps>0$, their union has measure $\geq1-\eps$, if~$V$ is large enough in terms of~$\eps$.

For any $x\in S^*_{g, (c_i)}$ and any $j\geq1$ we have $\frac{v_{g+j-1}}{v_{g+j}}\geq \frac 1{2b_{g+j}}\geq \frac{1}{2Q_j(V)}\geq \frac{1}{2K\psi(V) j^{1+\delta}}$, where we dropped the dependency on $x$ in the inequalities for easy of notation. Thus, assuming $V>1/\nu$, by~\eqref{slte} (with $z=1/V$) we have
$$
\frac{|h((-1)^{j-1}{v_{g+j-1}}/{v_{g+j}})|}{2^{\Re(k)j/2}}< \inf_{0< z\leq 1/\sqrt V}\frac{|h(-z)|}{2^{\Re(k)j/4}\psi(V)}\leq \frac{|h(-{v_{g-1}}/{v_g})|}{2^{\Re(k)j/4}\psi(V)},
$$
since $v_{g-1}/v_g\leq 1/b_g\leq 1/\sqrt V$. With $F_g(x)$ as in~\eqref{fga1} we then deduce the following analogue of~\eqref{fga}
\begin{align*}
  F_g(x) &=  \frac{\vartheta^p  h(-{v_{g-1}(x)}/{v_g(x)})}{v_g(x)^{k}}+O\bigg(\sum_{j\geq1}^\infty \frac{|h\big((-1)^{j-1}{v_{g+j-1}(x)}/{v_{g+j}(x)}\big)}{ v_g(x)^{\Re(k)}2^{\Re(k)j/2}}\bigg)\\
  &=\frac{\vartheta^p+o(1)}{v_g(x)^{k}}h(-{v_{g-1}(x)}/{v_g(x)}).
\end{align*}
We let $x,y\in S^*_{g, (c_i)}$ and write $\uu_g=\frac{v_{g}(x)}{v_{g}(y)}=\frac{b_{g}(x)v_{g-1}+v_{g-2}}{b_{g}(y)v_{g-1}+v_{g-2}}$ and $\omega_g:=v_{g-1}/v_{g}(x)$, so that we have
\begin{align*}
  F_g(x)-F_g(y) &=\vartheta^p\frac{ |h(-\omega_g)|}{v_g(x)^k}\bigg( \frac{h(-\omega_g)-\uu_g^kh(-\uu_g \omega_g)}{ |h(-\omega_g)|}+o(1)\bigg).
\end{align*}
By the definition of $\I$ we have 
$$
v_g(x)^{-i\Im(k)}=\e(\alpha_g)(1+o(1)),
$$
with $\alpha_g=\alpha_g(x)=0$ if $\Im(k)=0$ and otherwise~$\alpha_g(x)$ satisfying $|\alpha_g(x) - \pphi| \leq \xi$. Moreover, 
$\log \uu_g = \log (b_g(x)/b_g(y)) + o(1) \ll_k \xi$ and, if $|b_g(x)-b_{g}(y)|> \eps \sqrt V$, then also $|\log \uu_g|\geq \log(1+\eps/K) + o(1) \gg_k \eps$ for~$V$ large enough. Taking $\xi=\eps^{2/3}$ we then have
\begin{align}\label{reree}
  \phi(  F_g(x)-F_g(y)) &=\frac{ |h(-\omega_g)|}{|v_g(x)|^k} \phi\bigg(\vartheta^p\e(\alpha_g)\bigg( \frac{h(-\omega_g)-\uu_g^kh(-\uu_g \omega_g)}{ |h(-\omega_g)|}\bigg)+o(1)\bigg)\\
  &\neq0\notag
\end{align}
by hypothesis~\ref{it:cond-kpos-2}, provided that $|b_g(x)-b_{g}(y)|>  \eps \sqrt V$. We then conclude in the same way as we did in Section~\ref{sec:cont-cdf-hbounded}.

\begin{remark}\label{nlim}
  From the above proof, it is clear that the term~$o(1)$ in~\eqref{reree} can be replaced by~$O(1/\sqrt{V} + 1/\psi(V))$. This will be used in one special case of our applications to the cotangent sums.
\end{remark}

\begin{remark}\label{rationals}
  From the above proof, it is also immediate to see that we can modify the condition $(2^*)$, $k\not \in\R$, of Theorem~\ref{gdless1} as follows. Given $\beta\in\R$ and any small $\eps>0$, we denote by $\mathcal R=\mathcal R_{k,\eps,\beta}$ the set or reduced rationals $x=p/q$ with $|q^{-i\Im(k)}\e(-\beta)-1|\leq 10\eps^{2/3}.$ Then, we can weaken the condition $\lim_{x\to0^\pm}|h(x)|=\infty$ in $(2^*)$ by introducing the restriction $x\in \mathcal R$ in the limit\footnote{This amounts to observing that also Lemma~\ref{slt} holds under this weaker hypothesis, provided that one adds the condition $y\in \mathcal R$ in the $\inf$ on the right hand side of~\eqref{slte}.} and replace~\eqref{eq:hypo-2star-complex} by
  \begin{equation*}
    \liminf_{x\to 0^\pm\atop x\in \mathcal R} \inf\bigg\{\bigg|\phi\bigg(\e(\alpha) \frac{ h(x)- \uu^{k} h(\uu x)}{|h(x)|} \bigg)\bigg|: 
    \begin{aligned}& |\log u| \in (\eps, \eps^{2/3}),  |\alpha-\pphi| < \eps^{2/3},\ \\ 
      & \uu x \in \mathcal R, \num(\uu x)=\num(x)
    \end{aligned}\bigg\} > 0. 
  \end{equation*}
\end{remark}

\section{Arithmetic applications}

\subsection{Eichler integrals of holomorphic cusp forms}

\begin{proof}[Proof of Corollary~\ref{cormof}]
  The map~$f(x) = \tilde g(x)$ satisfies the relation~\eqref{mcg} with weight~$2-k$ and with~$h$ being a non-zero polynomial of degree at most~$k-2$, see~\cite[p.~273]{Eichler1957}. 
  Since~$h$ is Lipschitz and bounded on~$[-1, 1]$, the estimates~\eqref{coh} hold trivially. We may therefore apply Theorem~\ref{dless0}, which gives the claimed convergence in distribution. Notice that in this case $\extneg{\tilde g}$ coincides on the whole real line with $\tilde g$ as defined in~\eqref{eq:def-eichlerint}.

  Finally, assume~$(\extneg{\phi\circ \tilde g})_\ast(\df \nu)$ has an atom for some non-zero linear form $\phi$ and write $\phi(z)=\Re(\e^{i\theta}z)$ for some $\theta\in\R$ and all $z\in\C$. Then by Proposition~\ref{prop:cdf-cont-kneg} we obtain that~$\extneg{\phi\circ \tilde g}$ is constant on~$[0,1]$. By~\eqref{eq:def-eichlerint}, $\extneg{\phi\circ \tilde g}$ is given by the Fourier series
  \begin{align*}
    x\mapsto \sum_{n\geq1}\frac{\Re(e^{i\theta} a_n)}{n^{k-1}}\cos(nx)-\sum_{n\geq1}\frac{\Im(e^{i\theta} a_n)}{n^{k-1}}\sin(nx),
  \end{align*}
  which is constant on $[0,1]$ only if $\Re(e^{i\theta} a_n)=\Im(e^{i\theta} a_n)=0$, i.e. $a_n=0$, for all $n\in\N_{>0}$. Thus $g=0$ which was excluded by hypothesis.
\end{proof}

\begin{proof}[Proof of Corollary~\ref{corakd}]
  The map~$A_{k,D}$ defined in~\eqref{eq:def-AkD} coincides with~$F_{k+1, D}$ defined in~\cite[eq.~(15)]{Zagier1999}. Consider the even cusp form~$g = f_{k+1, D} \in S_{2k+2}(1)$ defined in~\cite[eq.~(53)]{Zagier1999}. By~\cite[eq.~(55)]{Zagier1999}, we have~$A_{k,D}(x) = c_{k,D} + \frac12 \tilde g(x)$ for all~$x\in\Q$ and some~$c_{k, D} \in \R$. Therefore Corollary~\ref{corakd} follows from the just proved Corollary~\ref{cormof}. 
\end{proof}

\subsection{Kontsevich's function}

\begin{proof}[Proof of Corollary~\ref{corvas}]
  Define~$h$ by formula~\eqref{rvas}. In~\cite[Theorem, p.~958]{Zagier2001}\footnote{The factors~$\zeta_8^{\pm3}$ there should be read~$\zeta_8^{\pm 1}$.}, it is proved that~$h\in C^\infty(\R)$ and that~$h$ is real-analytic except at~$0$, and moreover~$h(0) = 1$. Thus~$\varphi$ and~$h$ satisfy the hypotheses of the first part of Theorem~\ref{dless1}, with the generalized hypotheses~\eqref{eq:recip-f-general} (with~$\vth = \e(1/24)$) which were adopted in the proof. This proves the existence of the limiting distribution. The limit~\eqref{eq:def-extpos-kontsevich} arises from the relations~\eqref{eq:rel-f-Psi} and \eqref{eq:def-Psi-proof-kpos}.
  
  Let~$\theta \in \R$ and~$\phi: z \mapsto \Re(\e^{i\theta} z)$. To see that~$(\phi \circ \extpos{\varphi})_\ast(\df\nu)$ does not have atoms, we apply Theorem~\ref{gdless1}. The period function~$h$ is continuous on~$[-1, 1]$, and $h(0)=1$. Since~$\vth\not\in\R$, certainly one of~$\phi(1)$ or~$\phi(\vth)$ is non-zero, and therefore hypothesis~\ref{it:cond-kpos-1} is satisfied for some~$p\in\{0, 1\}$. Theorem~\ref{gdless1} applies and yields the desired conclusion.
  
  Finally, the continuity of $\extpos{\varphi}$ follows immediately from (the twisted version of) Theorem~\ref{ds1}.
\end{proof}

\subsection{Cotangent sums}\label{sec:cotangent-sums}

For~$b\in\Z$, $q\geq 1$ and~$(b, q) = 1$, the cotangent sums we are interested in are defined by
$$ c_a\Big(\frac bq\Big) := q^a \sum_{m=1}^{q-1} \cot\Big(\frac{\pi m b}{q}\Big) \zeta\Big(-a, \frac mq\Big), $$
see~\cite[p.~226]{Bettin2013a}. The special case~$a=-1$ corresponds to the classical Dedekind sums~\cite[p.~466]{Riemann1892}, while the case~$a=0$ corresponds to the cotangent sum from~\cite{BettinConrey2013}.
These sums were described as ``imperfect'' quantum modular forms, due to an irregular term arising in the period relation, namely the last term on the right-hand side of~\cite[eq.~(17)]{Bettin2013a}; see the discussion in Example~0 in~\cite{Zagier2010}. The point of the upcoming definition is that we can relax this lack of regularity at the cost of weakening the periodicity hypothesis to~\eqref{eq:f-weak-per}. Let
$$ \rho\Big(\frac bq\Big) = \begin{cases} \{\tfrac{\bar b}q\} & (q>1, (b, q)=1), \\ 1 & (q = 1, b>0), \\ 0 & (q=1, b<0). \end{cases} $$
By Bezout's theorem, we have for~$b\neq 0$, $q\geq 1$, $(b, q)=1$,
$$ \rho\Big(\frac bq\Big) - \rho\Big(\frac{-q}{b}\Big) = \frac1{bq}. $$
For~$x\in\Q$, let
$$ \ct_a(x) := c_a(x) + a\kappa_1(a) \den(x)^{1+a} \rho(x) \qquad (x\neq0),$$
extended arbitrarily at~$0$, and with $\kappa_1(a):=\frac{\zeta(1-a)}{\pi}$.
Note that~$c_a$ is~$1$-periodic, and that~$\rho$ satisfies the weak periodicity~\eqref{eq:f-weak-per}, from which we deduce that~$\ct_a$ also satisfies~\eqref{eq:f-weak-per}.

From~\cite[Theorem~4]{Bettin2013a}, we have
\begin{equation}
  c_a(x) - \abs{x}^{-1-a} c_a(-1/x) =h_a(x)-a\kappa_1(a)\frac{\den(x)^a}{\num(x)} \qquad (x \in \Q\setminus\{0\}),\label{eq:perrel-cta}
\end{equation}
and thus
\begin{equation}
  \ct_a(x) - \abs{x}^{-1-a} \ct_a(-1/x) =h_a(x) \qquad (x \in \Q\setminus\{0\}),\label{eq:perrel-cta}
\end{equation}
for $h_a(x) :=  - \sgn(x) i \zeta(-a) \psi_a(\abs{x})$ with $\psi_a(x)$ as in~\cite[Theorem~4]{Bettin2013a}. In particular, $h_a$ is real-analytic on~$\R_{\neq0}$. Also, by the same theorem, for $a\neq0$, we have\footnote{For $a\in\Z$ the result is obtained by continuity, cf. the explanation after Theorem~1 in~\cite{Bettin2013a}.}
\begin{equation}\label{ash}
  h_a(x) = \kappa_2(a)\sgn(x) |x|^{-1-a} + \kappa_1(a) x^{-1}+O_a(|x|)
\end{equation}
as~$x\to 0$, for $\kappa_2(a)=-\zeta(-a)\cot(\frac{\pi a}2)$, and where the error has to be replaced by $O(\abs{x}\log \abs{1/x})$ if $a=-2$. For $a=0$ we instead have
\begin{equation}\label{ash0}
  h_0(x) = -\frac{\log(2\pi |x|)-\gamma}{\pi x}+ O(|x|).
\end{equation}
We prove the distributional statements in Corollary~\ref{cordedek}, considering several cases depending on the value of~$a$. The statement in Corollary~\ref{cordedek} about the continuity of~$\extpos{c_a}$ when~$\Re(a)>0$ will follow immediately from Theorem~\ref{ds1} and the behaviour around~$0$ of the period functions.

\subsubsection{The case~$\Re(a)<-1$}
By~\cite[Theorem~1]{Bettin2013a}, it follows that both~$h_a$ and~$h_a'$ are bounded by~$\abs{x}^{O(1)}$ for all~$x\neq 0$. Thus, the conditions~\eqref{coh} are satisfied, and by Theorem~\ref{thm:kneg-fdagger-ext}, the function
\begin{equation}
  \extneg{\ct_a}(x) = \lim_{j\to \infty} \ct_a([b_0; b_1, \dotsc, b_j]), \qquad (x=[b_0;b_1, b_2, \dotsc])\label{eq:def-lim-ctaneg}
\end{equation}
exists for~$x$ in a set~$X\subset \R$ of full measure.
By Theorem~\ref{dless0}, we deduce that the multisets
$$ \{\ct_a(\tfrac bq),\ 0<b<q, (b, q) = 1\} $$
become equidistributed, as~$q\to\infty$, according to~$(\extneg{\ct_a})_*(\df \nu)$. Since
\begin{equation}
  \ct_a(x) - c_a(x) = O(\den(x)^{1+\Re(a)}) = o_a(1)\label{eq:ca-cta-close}
\end{equation}
as~$\den(x) \to \infty$, uniformly in the numerator~$\num(x)$, it follows that the same conclusion holds for the multisets
$$ \{c_a(\tfrac bq),\ 0<b<q, (b, q) = 1\}. $$

Next we prove that if $a\in\R_{<-1}$, then $(\extneg{\ct_a})_*(\df\nu)$ is diffuse on~$\R$. Assume, for the sake of contradiction, the existence of~$\lambda\in\R$ and~$C\subset X\cap\R_{>0}$ of positive Lebesgue measure such that~$\extneg{\ct_a}(x) = \lambda$ for all~$x\in C$. By Proposition~\ref{prop:cdf-cont-kneg}, we deduce that~$\extneg{\ct_a}(x) = \lambda$ for almost all~$x>0$. Using the fact that~$c_a$ is odd, which transfers to~$\extneg{\ct_a}$ almost everywhere by~\eqref{eq:ca-cta-close},  we obtain by the period relation~\eqref{eq:perrel-cta} that $h_a(x)=\lambda (1+ \abs{x}^{-1-a} )$ for almost all~$x>0$. However, this contradicts~\eqref{ash} as~$x\to 0^+$, regardless of the value of~$a$.

Now, let $a\notin\R$ and let~$\phi:\C\to\R$ a non-zero linear form. We assume by contradiction that $(\phi\circ\extneg{\ct_a})_*(\df\nu)$ is not diffuse. As above we deduce that there exists ~$\lambda\in\R$ such that~$\phi(\extneg{\ct_a}(x)) = \lambda$ for almost all~$x>0$. Composing the period relation~\eqref{eq:perrel-cta} with~$\phi$, we obtain 
$\phi(h_a(x))=  \lambda - \phi( \abs{x}^{-1-a} \extneg{\ct_a}(-1/x)) . $
We pick~$y\in(0, 1)$ so that~$\extneg{\ct_a}(-y)$ and~$\extneg{\ct_a}(1/(n+y))$ are defined in~\eqref{eq:def-lim-ctaneg} for all~$n\in\N_{>0}$, and~$\phi(\extneg{\ct_a}(1/(n+y))) = \lambda$. Taking $x=1/(n+y)$, by periodicity we then obtain 
\begin{align}\label{tcas}
  \phi(h_a(x))=\lambda - \phi( \abs{n+y}^{1+a} \extneg{\ct_a}(-y)).
\end{align}
Now,  by~\cite[Theorem~1]{Bettin2013a}\footnote{We remark that there is a typo in~\cite[eq.~(5)]{Bettin2013a}, as the minus sign in front of the sum should be removed.} the asymptotic in~\eqref{ash} can be extended to
\begin{equation}\label{ash3}
  h_a(x) = \frac{\kappa_2(a)\sgn(x)}{ |x|^{1+a}} + \frac{\kappa_1(a)}{ x}+\sum_{m=1}^M(-1)^m\frac{2B_{2m}}{(2m)!}\zeta(1-2m-a)(2\pi x)^{2m-1}+O_{a,M}(|x|^{2M+1}),
\end{equation}
for any~$M\in\N$, where $B_{2n}\in\Q_{\neq0}$ denotes the $2n$-th Bernoulli number. By the functional equation we have
$$
\frac{\zeta(1-2m-a)}{\zeta(1-2(m+1)-a)} =-\frac{2\pi}{(2m+a)(2m+a+1)}\frac{ \zeta(2m+a)}{ \zeta(2m+2+a)}=\frac{2\pi}{4m^2}\bigg(2-\frac{1+2a}{m}+O(m^{-2})\bigg)
$$
as $m\to\infty$. In particular,  $\zeta(1-2m-a)$ cannot be a real multiple of $\zeta(1-2(m+1)-a)$ for $m$ large enough and thus at least one of these terms in the expansion~\eqref{ash3} survives once composed with $\phi$. Letting $n\to\infty$, we see that this is not compatible with~\eqref{tcas} and thus we reach a contradiction.  This finishes the proof of Corollary~\ref{cordedek} when~$\Re(a)<-1$.

\subsubsection{The case~$\Re(a)>-1$ and $a\zeta(a)\neq0$}

It is remarked in~\cite[p.~227]{Bettin2013a} that~$c_a(x) \equiv 0$ whenever~$a$ is a positive odd integer. In this case Corollary~\ref{cordedek} holds for trivial reasons and thus we assume that~$a\not\in 2\Z + 1$ throughout the section. We also assume $\Re(a)>-1$ and $a\zeta(a)\neq0$  and notice that by the functional equation we have $\kappa_1(a)\neq0$.

Since~$ h_a(x) = O(\abs{x}^{-\max(1, 1+\Re(a))})$ for~$\abs{x}<1$, the hypotheses of Theorem~\ref{dless1} are satisfied and it follows that the limit
\begin{align}\label{dfct}
  \extpos{\ct_a}(x) := \lim_{j\to\infty} \den(x_{2j+1})^{-1-a} \ct_a(\bar{x_{2j+1}}), \qquad (x_{2j+1}=[b_0(x); b_1(x), \dotsc, b_{2j+1}(x)]) 
\end{align}
exists for~$x$ in a set~$X\subset \R\setminus\Q$ of full measure. By Theorem~\ref{dless1}, we have that the multiset
$$ \{q^{-1-a} \ct_a(\tfrac bq) \colon 0<b<q, (b, q) = 1\}= \{q^{-1-a} \ct_a(\tfrac {\overline b}q) \colon 0<b<q, (b, q) = 1\} $$
becomes distributed, as~$q\to\infty$, according to~$(\extpos{\ct_a})_*(\df \nu)$. But by definition of~$\ct_a$, we have $q^{-1-a} \ct_a(\frac {\overline b}q)=q^{-1-a} c_a(\frac {\overline b}q)+a\kappa_1(a)\{\frac bq\}$ for $q>1$. Letting
\begin{align}\label{rlct}
  \extpos{c_a}(x) := \extpos{\ct_a}(x) - a\kappa_1(a) \{x\},
\end{align} 
it follows that the multiset
$$ \{q^{-1-a} c_a(\tfrac bq) \colon  0<b<q, (b, q) = 1\} $$
becomes distributed according to~$(\extpos{c_a})_*(\df \nu)$, as claimed.

\medskip

We now turn to showing that the relevant measures are diffuse. 

It is convenient to make a further simple modification to $c_a$ and define
$ \ctt_a(x) := \ct_a(x)-\kappa_2(a)\sgn(x)$.
Clearly, one has that $\ctt_a(x)$ still satisfies~\eqref{eq:f-weak-per}, whereas~\eqref{eq:perrel-cta} holds with $h_a(x)$ replaced by $\breve h_a(x)=h_a(x)-\kappa_1(a)(1+ \abs{x}^{-1-a})$. In particular,~\eqref{ash} becomes
\begin{equation}\label{ash2}
  \breve h_a(x) =- \kappa_2(a)\sgn(x)  + \kappa_1(a) x^{-1}+O_a(x),\qquad x\to0.
\end{equation}
The hypothesis of Theorem~\ref{dless1} are thus still satisfied. Taking the limit as in~\eqref{dfct} one deduces that $\extpos{\ctt_a}(x)=\extpos{\ct_a}(x)$ for $x\in X$.
Also, by~\eqref{ash2} for $\eps<|\log u|<\eps^{2/3}$ we have
\begin{align}\label{fdd}
  \frac{\breve h_a(x)-u^{1+a} \breve h_a(ux)}{|\breve h_a(x)|}
    &=  \frac{\kappa_2(a) (1-u^{1+a})+\kappa_1(a)(1-  u^{a})x^{-1}+o(1)}{\kappa_2(a)+\kappa_1(a)x^{-1}+o(1)}=1-u^a+o(1)
\end{align}
as $x\to0^+$, uniformly in sufficiently small $\eps>0$. Notice that $  1-u^{a}=-a\log u+O_a(\eps^{4/3})\gg\eps$.
When~$a\in\R$, then clearly~$c_a(x) \in\R$ for all~$x\in\Q$, and therefore~$h_a(x) \in\R$ for~$x\neq 0$ as well. By~\eqref{eq:hypo-2star-real} one can then deduce that~$(\extpos{\ctt_a})_*(\df \nu)$ is diffuse,
and thus so is $(\extpos{\ct_a})_*(\df \nu)$. If  $a\notin\R$ one can show  in the same way that $(\phi(\extpos{\ct_a}))_*(\df\nu)$ is diffuse for any non-zero linear form~$\phi:\C\to\R$, upon choosing 
$\pphi:=-\arg (-a)-\kappa$ in~\eqref{eq:hypo-2star-complex}, where $\kappa$ is such that $\phi(z)=\Re(e^{i\kappa} z)$ for all $z\in\C$.

\medskip

We now wish to prove that also the measure $(\phi_a(\extpos{c_a}))_*(\df\nu)$ is diffuse (with $\phi=\Re$ if $a\in\R$). Notice that we can assume~$\phi(a\kappa_1(a))\neq 0$ since otherwise~$\phi\circ\extpos{c_a} = \phi\circ\extpos{\ct_a}$.

We start with the case $\Re(a)>1$. Suppose by contradiction that $\phi\circ\extpos{\ct_a}(x)=\lambda$ for all $x\in C\seq (0,1)$ for a set $C$ of  positive measure. Since $\Re(a)>1$, by Theorem~\ref{ds1} we can assume that $\ct_a$ is $\alpha$-Hölder continuous at any point of $C$ for any $\alpha\in(1,\tfrac12(1+\Re(a)))$. Also, since $C$ is uncountable, it contains one of its accumulation points, i.e. there exists a sequence $(z_m)_m$ in $C$ converging to $z\in C$. Then, on the one hand we have
$$
\phi(\ct_a(z))-\phi(\ct_a(z_m))=\phi(\ct_a(z)-\ct_a(z_m))=O(|z-z_m|^{\alpha})=o(|z-z_m|)
$$
as~$m\to\infty$, and on the other hand by~\eqref{rlct} we have
$$\phi(\extpos{\ct_a}(z)) - \phi(\extpos{\ct_a}(z_m)) = \phi(\extpos{c_a}(z)) - \phi(\extpos{c_a}(z_m))-\phi(a\kappa_1(a))(z-z_m)=-\phi(a\kappa_1(a))(z-z_m).$$
Since these equations are not compatible we reach the desired contradiction.

\smallskip

Now, assume $\Re(a)\in(-1,1]$. First we notice that in this case we can proceed directly with $\hat c_a(x)=c_a(x)-\kappa_2(a)\sgn(x)$. Indeed, if we let~$\hat h_a(x):=\breve h_a(x) -a\kappa_1(a)\frac{\den(x)^a}{\num(x)}\1_\Q(x)$, where $\1_\Q$ is the indicator function of the rationals, we have that $\hat h_a(x)$ still satisfies the hypothesis of Theorem~\ref{dless1}. 
Clearly the function $\extpos{c_a}(x)$ obtained this way coincides with the $\extpos{c_a}(x)$ defined above for almost all $x$. 
Also, if $\Re(a)<1$ we have that~$\hat h_a(x)\sim \breve h_a(x) $ as $x\to0$, and thus we can show as in~\eqref{fdd} that $\frac{\hat h(x)-u^{1+a} \hat h(ux)}{|\hat h(x)|}=1-u^a+o(1)$ as $x\to0$. The same argument then gives that $(\phi_a(\extpos{c_a}))_*(\df\nu)$ is diffuse.

Finally, we assume $\Re(a)=1$ (we recall $a\neq1$). We apply Lemma~\ref{slt} in the form of Remark~\ref{rationals}. Using the notation of the remark, we have $\hat h_a(x)=a\kappa_1(a)\frac{\den(x)-\den(x)^a}{\num(x)}+O(1)\asymp 1/x \to \infty$ with $x\in \mathcal R$, for $\beta\notin\Z$ and $\eps$ sufficiently small. Then, for $x,ux \in \mathcal R$, $x=p/q$, $\num(u x)=p$, $\eps<|\log u|<\eps^{2/3}$, as $x\to 0^+$ we have
\begin{align*}
  \frac{\hat h_a(x)-u^{1+a} \hat h_a(ux)}{|\hat h_a(x)|}
  &= a\kappa_1(a)  \frac{(1-  u^{a})q/p-(1-u)q^a/p}{|\breve h_a(x)|}+o(1)\\
  &= a\kappa_1(a)  \frac{(q^{a-1}-a)\log u+O(\eps^{4/3})}{|\breve h_a(x)|x}+o(1)\\
  &= a\kappa_1(a)  \frac{(\e(\beta)-a)\log u+O(\eps^{4/3})}{|\breve h_a(x)|x}+o(1),
\end{align*}
from which we can once again conclude that $(\phi_a(\extpos{c_a}))_*(\df\nu)$ is diffuse by choising $\beta$ appropriately.

\subsubsection{The case of $\zeta(a)=0$}\label{za0}

We assume $\zeta(a)=0$, so that $\kappa_1(a)=0$, $\Re(a)\in(0,1)$ and $\Im(a)\neq0$. In particular, with the same notation as in the previous section, we have that $\hat h_a(x)=-\kappa_2(a)\sgn(x)+o(1)$ as $x\to0$. Thus $\hat h_a(x)$ is continuous on $[-1,1]\setminus\{0\}$ with non-zero right and left limits $\hat h_a(0^\pm)=\mp\kappa_2(a)$ at $x=0$.\footnote{In fact one has that $\hat h_a(x)+\kappa_2(a)\sgn(x)$ extends to a function in $\mathcal C^{\infty}(\R)$.} Since $\kappa_2(a)\neq0$ and the weight $1+a\notin\R$, we immediately deduce from the condition~\ref{it:cond-kpos-1} in Theorem~\ref{gdless1} that $(\phi(\extpos{c_a}))_*(\df\nu)$ is diffuse for any non-zero linear form~$\phi$.

\subsubsection{The case of $a=0$}

We let~$\hat h_0(x):= h_0(x) +\frac1{\pi \num(x)}\1_\Q(x)$.
Using~\eqref{ash0} instead of~\eqref{ash} we obtain
$$\frac{\hat h_0(x)-u \hat h_0(ux)}{|\hat h_0(x)|}=-\frac{u\log u+O(|x|)}{\log(2\pi |x|)+\gamma}$$
which is not quite sufficient for hypothesis~\ref{eq:hypo-2star-real} to hold. However, we observe that for $h=h_0$ (and $\delta=1/2$, $C=4$, $R=2^{1/4}$, and $\xi(w):=w^{-1/2}$) the conclusion of Lemma~\ref{slt} holds with $\psi(x)=x^{-1/5}$. In view of this, and following Remark~\ref{nlim}, we obtain
$$
\frac{\hat h_0(x)-u \hat h_0(ux)}{|\hat h_0(x)|}+O(|x|^{-1/5})=-\frac{u\log u+O(|x|)}{\log(2\pi |x|)+\gamma}+O(|x|^{-1/5})
$$
which is clearly non-zero for $x>0$ small enough and $\eps<|\log u|<\eps^{2/3}$. Substituting this estimate inside~\eqref{reree} and completing the arguments in Section~\ref{sec:cont-cdf-hunbounded}, we conclude that~$(\extpos{c_0})_*(\df \nu)$ is diffuse.

\subsection{Eichler integrals of Maass cusp forms}

\begin{proof}[Proof of Corollary~\ref{cormaf}]
  For simplicity, we assume $u$ is even or odd and let $j\in\{0,1\}$ be such that $u(-\overline z)=(-1)^ju(z)$. The case where $u$ is neither is believed not to be possible, but it could as well be handled easily by splitting $u$ in its even or odd components.
  In~\cite{Bruggeman2007}, it is shown that the map~$\tilde u$ defined in~\eqref{eq:def-utilde} is a quantum modular form of weight~$2s$, whose associated period function~$h$ is defined by\footnote{This corrects a typo in~\cite[Section~1.1.3]{Bruggeman2007}, as the equation for $\tilde \psi$ should be $-|x|^{-2s}\tilde \psi(-1/x)=\tilde \psi(x)$.}
  $$ h(x) = c(s)\sgn(x)^{1-j}\psi(|x|), \qquad x\in\R_{\neq0}  $$
  and by continuity at $x=0$,
  where~$c(s) = i \pi^{-s} / \Gamma(1-s)$ is a non-zero proportionality constant~\cite[eq.~(1.12)]{Lewis2001}.
  The map~$h$ is real-analytic separately on~$\R_{\leq 0}$ and~$\R_{\geq 0}$ and is $\mathcal C^{\infty}$ on $\R$.
  Thus Theorem~\ref{ds1} applies and yields the first part of Corollary~\ref{cormaf}.
  The expansion of $\psi$ at $0$ is implicit in~\cite{Lewis2001} and can be immediately deduced by shifting the line of integration to the left in the first display of~\cite[p.~205]{Lewis2001} from which one deduces that $h(x)$ is not identically zero on $[-1,1]$. Thus, since the weight is $2s\notin\R$, Theorem~\ref{gdless1} applies in the case~\ref{it:cond-kpos-1}, and we deduce that~$(\phi\circ \extpos{\tilde u})_\ast(\df\nu)$ is diffuse. This concludes the proof of Corollary~\ref{cormaf}.
\end{proof}

\begin{remark}\label{rmk:regularity-eichler}
  By the functional equation for twists of Maa\ss{} forms $L$-functions~\cite[(A.12)-(A.13)]{Kowalski2002}, we have
  $$ \tilde u(\bar{a}/q) = q^{2s} \sum_{n\geq 1} \frac{a_n}{n^{1/2+s}}(c_1 \cos(2\pi n a/q) + c_2 \sin(2\pi n a/q)), \qquad (a\bar a\equiv 1\pmod{q}), $$
  where~$c_1, c_2$ are numbers depending on~$u$.
  With this formulation, the regularity of~$a/q \mapsto q^{-2s} \tilde u(\bar{a}/q)$ relates to the differentiability properties for~$\tilde g$ which we mentioned in Section~\ref{sec:appl-perpol-holo}.

  An important classical problem is the analogous question when~$a_n = d(n)$ is the divisor function, see~\cite{Riemann2013,Chowla1931,Wintner1937}. This can be seen as the case when~$u$ is replaced by a certain weight~$0$ real-analytic, non-cuspidal Eisenstein series~\cite[p.~62]{Iwaniec2002}. Regularity in this case can be studied from the expression above as a Fourier series; recent general results can be found in~\cite{ChamizoEtAl2017}.
\end{remark}

\bibliographystyle{../../amsalpha2}
\bibliography{../../bib2}

\end{document}